\newcommand{\Art}{\mathrm{Art}}
\newcommand{\Std}{\mathrm{Std}}
\newcommand{\C}{\mathbb{C}}
\newcommand{\F}{\mathbb{F}}
\newcommand{\Q}{\mathbb{Q}}
\newcommand{\R}{\mathbb{R}}
\newcommand{\Z}{\mathbb{Z}}
\newcommand{\G}{\mathbf{G}}
\newcommand{\mfd}{\mathfrak{d}}
\newcommand{\mfp}{\mathfrak{p}}
\newcommand{\mcO}{\mathcal{O}}
\newcommand{\mcS}{\mathcal{S}}
\DeclareMathOperator{\slope}{slope}
\DeclareMathOperator{\val}{val}
\DeclareMathOperator{\Ad}{Ad}
\DeclareMathOperator{\Frob}{Frob}
\DeclareMathOperator{\Gal}{Gal}
\DeclareMathOperator{\Hom}{Hom}
\DeclareMathOperator{\Ind}{Ind}
\DeclareMathOperator{\Irr}{Irr}
\DeclareMathOperator{\Ker}{Ker}
\DeclareMathOperator{\Lie}{Lie}
\DeclareMathOperator{\LLC}{LLC}
\DeclareMathOperator{\Nr}{Nr}
\DeclareMathOperator{\Sw}{Sw}
\DeclareMathOperator{\Sym}{Sym}
\DeclareMathOperator{\Tr}{Tr}
\DeclareMathOperator{\GL}{GL}
\DeclareMathOperator{\SL}{SL}
\DeclareMathOperator{\SO}{SO}
\DeclareMathOperator{\Sp}{Sp}
\theoremstyle{plain}
\newtheorem{thm}{Theorem}[section]
\newtheorem*{thm*}{Theorem}
\newtheorem{prop}[thm]{Proposition}
\newtheorem{lem}[thm]{Lemma}
\newtheorem{cor}[thm]{Corollary}
\newtheorem{conj}[thm]{Conjecture}
\theoremstyle{definition}
\theoremstyle{remark}
\newtheorem{rem}[thm]{Remark}
\newtheorem*{claim*}{Claim}
\title{On Swan exponents of symmetric and exterior square Galois representations}
\author{Guy Henniart}
\address{Universit\'e Paris-Saclay, CNRS, Laboratoire de Math\'ematiques d'Orsay, 91405, Orsay, France.}
\email{Guy.Henniart@math.u-psud.fr}
\author{Masao Oi}
\address{Department of Mathematics (Hakubi center), Kyoto University, Kitashirakawa, Oiwake-cho, Sakyo-ku, Kyoto 606-8502, Japan.}
\email{masaooi@math.kyoto-u.ac.jp}
\dedicatory{Dedicated to Marko Tadi\'{c}, on his 70th birthday.$^{\ast}$}
\thanks{$^{\ast}$The first author met Marko Tadi\'{c} already in 1971, at the International Mathematical Olympiads in Slovakia. But we realized that only later, when discussing mathematics and more, which happened on many nice occasions.}
\begin{document}

\begin{abstract}
Let $F$ be a local non-Archimedean field and $E$ a finite Galois extension of $F$, with Galois group $G$.
If $\rho$ is a representation of $G$ on a complex vector space $V$, we may compose it with any tensor operation $R$ on $V$, and get another representation $R\circ\rho$. 
We study the relation between the Swan exponents $\Sw(\rho)$ and $\Sw(R\circ\rho)$, with a particular attention to the cases where $R$ is symmetric square or exterior square. 
Indeed those cases intervene in the local Langlands correspondence for split classical groups over $F$, via the formal degree conjecture, and we present some applications of our work to the explicit description of the Langlands parameter of simple cuspidal representations.
For irreducible $\rho$ our main results determine $\Sw(\Sym^{2}\rho)$ and $\Sw(\wedge^{2}\rho)$ from $\Sw(\rho)$ when the residue characteristic $p$ of $F$ is odd, and bound them in terms of $\Sw(\rho)$ when $p$ is $2$. 
In that case where $p$ is $2$ we conjecture stronger bounds, for which we provide evidence.
\end{abstract}

\maketitle

\tableofcontents

\section{Introduction}

Let $p$ be a prime number, $F$ a non-Archimedean local field of residue characteristic $p$, and $E$ a finite Galois extension of $F$ with Galois group $G=\Gal(E/F)$.
Suppose that $\rho$ is a finite-dimensional representation of $G$ on a complex vector space $V$ (in this paper, we refer to such $\rho$ simply as a \textit{Galois representation}).
For any algebraic representation $R$ of $\GL_{\C}(V)$ on a finite-dimensional complex vector space $V'$, by composing $R$ with $\rho$, we obtain another Galois representation $R\circ\rho$ of $G$.
\[
\xymatrix{
G\ar_-{\rho}[r] \ar@/^15pt/^-{R\circ\rho}[rr]&\GL_{\C}(V) \ar_-{R}[r]&\GL_{\C}(V')
}
\]

Given that the \textit{Swan exponent} is one of the most fundamental invariants of Galois representations (see Section \ref{subsec:Swan} for the definition), it is natural to ask if there is an interesting relationship between the Swan exponents $\Sw(R\circ\rho)$ and $\Sw(\rho)$. 
This is the problem of our interest.

We first introduce several preceding results on this problem.
Bushnell--Henniart--Kutzko investigated the case where $\rho$ is irreducible and $R=\Std\otimes\Std^{\vee}$, i.e., $R\circ\rho=\rho\otimes\rho^{\vee}$, where $\rho^{\vee}$ denotes the contragredient representation of $\rho$, in \cite{BHK98-JAMS}.
They established an explicit formula of $\Sw(\rho\otimes\rho^{\vee})$.
(In fact, their result is more general; for any two irreducible Galois representations $\rho_{1}$ and $\rho_{2}$, they obtained an explicit formula of $\Sw(\rho_{1}\otimes\rho_{2})$).
Based on the result of Bushnell--Henniart--Kutzko, Anandavardhanan--Mondal (\cite{AM16}) established a similar explicit formula in the case where $R$ is the symmetric square $\Sym^{2}$, the exterior square $\wedge^{2}$, or the Asai representation ``$\mathrm{Asai}$'', by assuming that $p\neq2$.

However, in these cases, the Swan exponent $\Sw(R\circ\rho)$ cannot be expressed only in terms of $\Sw(\rho)$.
The description requires the information of a simple stratum (in the sense of Bushnell--Kutzko, \cite{BK93}) associated to the supercuspidal representation corresponding to $\rho$ under the local Langlands correspondence (cf.\ \cite[6.5, Theorem]{BHK98-JAMS} and \cite[Theorem 6.1]{AM16}).

Based on this observation, we settle for looking for bounds estimating $\Sw(R\circ\rho)$ from $\Sw(\rho)$, sometimes even getting exact formulas.
Concerning this direction, let us mention the work of Bushnell--Henniart \cite{BH17-Iran} and also the work of Kili\c{c} \cite{Kil20}, in which upper bounds and lower bounds of $\Sw(\rho_{1}\otimes\rho_{2})$ are investigated.
For example, some of the inequalities proved in \textit{loc.\ cit.} are as follows:

\begin{thm}[{\cite[Theorems AS and CS]{BH17-Iran}}]
Let $\rho_{1}$ and $\rho_{2}$ be Galois representations whose dimensions are $n_{1}$ and $n_{2}$, respectively.
\begin{enumerate}
\item
If $\rho_{1}$ is minimal in the sense that $\Sw(\rho_{1})\leq\Sw(\rho_{1}\otimes\chi)$ for any character $\chi$, then we have
\[
\Sw(\rho_{1}\otimes\rho_{2})
\geq
\frac{1}{2}\max\{n_{2}\Sw(\rho_{1}),n_{1}\Sw(\rho_{2})\}.
\]
\item
We have
\[
\Sw(\rho_{1}\otimes\rho_{2})
\leq
n_{2}\Sw(\rho_{1})+n_{1}\Sw(\rho_{2})-\min\{\Sw(\rho_{1}),\Sw(\rho_{2})\}.
\]
\end{enumerate}
\end{thm}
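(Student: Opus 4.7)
\emph{Framework.} The natural setting is the break decomposition of Galois representations with respect to the upper-numbering ramification filtration $(G^{u})_{u\geq 0}$: every $\rho$ decomposes canonically as $\rho=\bigoplus_{\lambda\geq 0}\rho^{(\lambda)}$ with $\rho^{(\lambda)}$ pure of slope $\lambda$, so $\Sw(\rho^{(\lambda)})=\lambda\dim\rho^{(\lambda)}$ and $\Sw(\rho)=\sum_{\lambda}\lambda\dim\rho^{(\lambda)}$.  The key input is the \emph{pure--pure rule}: if $V_{1},V_{2}$ are pure of slopes $\lambda_{1}\neq\lambda_{2}$, then $V_{1}\otimes V_{2}$ is pure of slope $\max\{\lambda_{1},\lambda_{2}\}$, while if $\lambda_{1}=\lambda_{2}=:\lambda$ only $\Sw(V_{1}\otimes V_{2})\leq\dim V_{1}\cdot\dim V_{2}\cdot\lambda$ holds (with possibly substantial cancellation).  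Applying this summand-by-summand to $\rho_{1}\otimes\rho_{2}=\bigoplus_{\lambda,\mu}\rho_{1}^{(\lambda)}\otimes\rho_{2}^{(\mu)}$ reduces each bound to an off-diagonal main term plus an equal-slope correction; I write $n_{i}^{(\lambda)}:=\dim\rho_{i}^{(\lambda)}$ throughout.

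\emph{Proof of (2).} From the pure--pure rule and the trivial equal-slope estimate,
\[
\Sw(\rho_{1}\otimes\rho_{2})\;\leq\;\sum_{\lambda,\mu}n_{1}^{(\lambda)}n_{2}^{(\mu)}\max\{\lambda,\mu\}.
\]
Using $\max\{\lambda,\mu\}=\lambda+\mu-\min\{\lambda,\mu\}$ this rewrites as $n_{2}\Sw(\rho_{1})+n_{1}\Sw(\rho_{2})-\sum_{\lambda,\mu}n_{1}^{(\lambda)}n_{2}^{(\mu)}\min\{\lambda,\mu\}$, so (2) reduces to the purely combinatorial inequality $\sum_{\lambda,\mu}n_{1}^{(\lambda)}n_{2}^{(\mu)}\min\{\lambda,\mu\}\geq\min\{\Sw(\rho_{1}),\Sw(\rho_{2})\}$. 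I would deduce this from the pointwise estimate $\sum_{\lambda}n_{1}^{(\lambda)}\min\{\lambda,\mu\}\geq\min\{\Sw(\rho_{1}),n_{1}\mu\}$ valid for every $\mu\geq 0$ (split the $\lambda$-sum at $\lambda\geq\mu$ vs.\ $\lambda<\mu$), then sum with weights $n_{2}^{(\mu)}$.

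\emph{Proof of (1) and main obstacle.} The pure--pure rule gives the off-diagonal contribution exactly, so
\[
\Sw(\rho_{1}\otimes\rho_{2})\;\geq\;\sum_{\lambda\neq\mu}n_{1}^{(\lambda)}n_{2}^{(\mu)}\max\{\lambda,\mu\}+\sum_{\lambda}\Sw\bigl(\rho_{1}^{(\lambda)}\otimes\rho_{2}^{(\lambda)}\bigr).
\]
A short computation with $\max\{\lambda,\mu\}\geq\lambda$ (resp.\ $\geq\mu$) shows that the off-diagonal sum alone already reaches $\tfrac{1}{2}n_{2}\Sw(\rho_{1})$ (resp.\ $\tfrac{1}{2}n_{1}\Sw(\rho_{2})$) unless there is a (necessarily unique) slope $\lambda$ with $n_{1}^{(\lambda)}>n_{1}/2$ and $n_{2}^{(\lambda)}>n_{2}/2$ simultaneously.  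The \textbf{main obstacle} is this critical equal-slope case: one must prove, under the minimality of $\rho_{1}$, a lower bound of order $\tfrac{1}{2}n_{1}^{(\lambda)}n_{2}^{(\lambda)}\lambda$ for $\Sw(\rho_{1}^{(\lambda)}\otimes\rho_{2}^{(\lambda)})$.  My approach is to descend to the abelian graded quotient $G^{\lambda}/G^{\lambda+}$, on which both $\rho_{1}^{(\lambda)}$ and $\rho_{2}^{(\lambda)}$ restrict to sums of characters; the tensor retains slope $\lambda$ precisely on character pairs that do not cancel.  Minimality of $\rho_{1}$ prevents the character system of $\rho_{1}^{(\lambda)}$ from being aligned with more than half of that of $\rho_{2}^{(\lambda)}$: otherwise one could twist $\rho_{1}$ by an appropriate character of slope $\lambda$ and strictly lower its Swan exponent, contradicting minimality.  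The factor $\tfrac{1}{2}$ is sharp, realised essentially when $\rho_{2}\cong\rho_{1}^{\vee}$ (so that $\rho_{1}\otimes\rho_{2}\supseteq\mathbf{1}$ forces strict cancellation).  Making this counting argument rigorous---replacing the heuristic ``character system'' by the Bushnell--Kutzko simple-stratum data in the irreducible case---is the delicate technical heart of \cite{BH17-Iran}.
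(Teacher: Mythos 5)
This theorem is quoted by the paper from \cite{BH17-Iran} without proof, so there is no internal argument to compare with; judged on its own terms, your proposal has genuine gaps. For part (2), the reduction via the break decomposition to the combinatorial inequality $\sum_{\lambda,\mu}n_1^{(\lambda)}n_2^{(\mu)}\min\{\lambda,\mu\}\ge\min\{\Sw(\rho_1),\Sw(\rho_2)\}$ is sound, but the pointwise estimate you propose to deduce it from is false: take $\rho_1$ with breaks $0$ and $2$, each of multiplicity $1$, and $\mu=1$; then $\sum_\lambda n_1^{(\lambda)}\min\{\lambda,\mu\}=1$ while $\min\{\Sw(\rho_1),n_1\mu\}=2$. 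The summed inequality is nevertheless true and easy to prove correctly: if $s_i$ denotes the largest break of $\rho_i$ and, say, $s_1\le s_2$, then the terms with $\mu=s_2$ alone give $\sum_\lambda n_1^{(\lambda)}n_2^{(s_2)}\min\{\lambda,s_2\}=n_2^{(s_2)}\Sw(\rho_1)\ge\min\{\Sw(\rho_1),\Sw(\rho_2)\}$. So part (2) is repairable inside your framework, and it is indeed the elementary half of the statement.

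Part (1) is where the real content lies, and there your argument stops at a heuristic. A small bookkeeping issue first: the critical slopes obstructing the two bounds $\tfrac12 n_2\Sw(\rho_1)$ and $\tfrac12 n_1\Sw(\rho_2)$ need not coincide (for the first one only needs some $\lambda$ in the Swan-support of $\rho_1$ with $n_2^{(\lambda)}>n_2/2$), so the ``necessarily unique'' common slope with both concentrations above one half is not forced. More seriously, the proposed use of minimality does not work as stated: minimality is a hypothesis about twists of the whole $\rho_1$ by characters of $G$ (equivalently of $F^\times$), whereas de-aligning the character systems of $\rho_1^{(\lambda)}$ and $\rho_2^{(\lambda)}$ on the graded piece $G^{\lambda}/G^{\lambda+}$ would require realizing a character of that subquotient as the restriction of such a global character, which is impossible in general (a character of $G$ is trivial on $[G,G^{\lambda}]$, a severe constraint); moreover, twisting by a character of slope $\lambda$ raises the breaks of the components of $\rho_1$ of slope $<\lambda$, so one cannot conclude that such a twist strictly lowers $\Sw(\rho_1)$. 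Finally, one must control not only which constituents of $\rho_1^{(\lambda)}\otimes\rho_2^{(\lambda)}$ lose slope but by how much; this is exactly where \cite{BHK98-JAMS} and \cite{BH17-Iran} bring in fundamental and simple strata, and your proposal explicitly defers that step. As written, part (1) is reduced to, rather than derived from, the hard statement.
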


\begin{thm}[{\cite[Theorem 9.3]{Kil20}}]
If $\rho$ is a minimal Galois representation on $V$, then, for any algebraic representation $R$ of $\GL_{\C}(V)$, we have
\[
\Sw(\rho\otimes(R\circ\rho))
\geq
\frac{1}{2}\dim(R\circ\rho)\cdot\Sw(\rho).
\]
\end{thm}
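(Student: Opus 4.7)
The plan is to derive this as an almost immediate corollary of Theorem AS(1) of Bushnell--Henniart quoted just above, applied to the pair $(\rho_{1},\rho_{2})=(\rho,R\circ\rho)$. In fact the bound we want is precisely the first term in the maximum appearing on the right-hand side of Theorem AS(1), namely $\frac{1}{2}n_{2}\Sw(\rho_{1})=\frac{1}{2}\dim(R\circ\rho)\cdot\Sw(\rho)$, so all the analytic work is already present in \emph{loc.\ cit.}

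The only technical point is that Bushnell--Henniart's theorem is most naturally formulated for a pair of irreducible Galois representations, whereas $R\circ\rho$ will typically be reducible. To circumvent this I would decompose
\[
R\circ\rho=\bigoplus_{i}\sigma_{i}
\]
into a direct sum of irreducible Galois representations (using that representations of a finite Galois group on a complex vector space are semisimple), and invoke additivity of the Swan conductor on direct sums:
\[
\Sw\bigl(\rho\otimes(R\circ\rho)\bigr)=\sum_{i}\Sw(\rho\otimes\sigma_{i}).
\]

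Applying Theorem AS(1) to each summand $(\rho,\sigma_{i})$ then yields
\[
\Sw(\rho\otimes\sigma_{i})\geq\frac{1}{2}\dim(\sigma_{i})\cdot\Sw(\rho),
\]
retaining only the first entry of the max; this is legitimate because the minimality hypothesis is placed entirely on $\rho$ and is thus unaffected by the choice of $\sigma_{i}$. Summing over $i$ and using $\sum_{i}\dim(\sigma_{i})=\dim(R\circ\rho)$ gives the claimed inequality.

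I do not foresee any serious obstacle: the theorem is essentially a repackaging of Bushnell--Henniart's lower bound in a form where the dependence on the tensor operation $R$ is absorbed into the single numerical invariant $\dim(R\circ\rho)$. An alternative route would bypass Theorem AS and argue directly via the Bushnell--Kutzko theory of simple strata attached to $\rho$ (which is where the notion of minimality originates), computing Swan exponents stratum by stratum; this is more hands-on but also more technical, and recovers the same bound.
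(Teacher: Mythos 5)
Your derivation is fine as a piece of mathematics, but be aware that the paper contains no proof of this statement to compare against: it is quoted verbatim from \cite{Kil20} as background in the introduction, just like the Bushnell--Henniart bounds above it. Your route --- decompose $R\circ\rho=\bigoplus_i\sigma_i$ by semisimplicity, apply Theorem AS(1) of \cite{BH17-Iran} to each pair $(\rho,\sigma_i)$, keep only the term $\tfrac{1}{2}\dim(\sigma_i)\Sw(\rho)$ from the maximum, and sum using additivity of $\Sw$ on direct sums --- is sound and makes the statement an immediate corollary of the quoted Theorem AS(1), with the dependence on $R$ entering only through $\dim(R\circ\rho)$. The one hypothesis to audit is irreducibility: in Bushnell--Henniart's original formulation the bound is proved for irreducible representations, while the statement you are proving (as quoted here) only assumes $\rho$ minimal; your decomposition handles the reducibility of $R\circ\rho$, but not of $\rho$ itself, and minimality is not inherited by the irreducible summands of $\rho$, so that case cannot be disposed of by the same trick. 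If $\rho$ is irreducible --- the situation actually used in this paper --- your proof is complete; for genuinely reducible minimal $\rho$ you would need to check that the Bushnell--Henniart bound (or Kili\c{c}'s own argument via the simple-stratum data attached to $\rho$, the ``hands-on'' alternative you mention) covers it.
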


Now let us explain our main results.
Our principal interest is in the case where $R=\Sym^{2}$ or $R=\wedge^{2}$.
Our first main result is the following:

\begin{prop}[Proposition \ref{prop:p-odd}]\label{prop:p-odd-intro}
Suppose that $p\neq2$.
For any Galois representation $\rho$, we have
\[
\Sw(\Sym^{2}\rho)-\Sw(\wedge^{2}\rho)
=
\Sw(\rho).
\]
\end{prop}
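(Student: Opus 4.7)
The plan is to exploit the standard character identity
\[
\chi_{\Sym^{2}\rho}(g) - \chi_{\wedge^{2}\rho}(g) = \chi_{\rho}(g^{2})
\]
for all $g \in G$, which follows from the decomposition $\rho \otimes \rho \cong \Sym^{2}\rho \oplus \wedge^{2}\rho$ and a direct eigenvalue computation. Since the Swan exponent is additive on short exact sequences and hence extends to a homomorphism from the Grothendieck group of $G$-representations to $\Z$, the claim is equivalent to showing $\Sw(V) = \Sw(\rho)$ for the virtual representation $V := \Sym^{2}\rho - \wedge^{2}\rho$, whose character is $g \mapsto \chi_{\rho}(g^{2})$.

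To compute both Swan exponents, I would use the description of $\Sw$ in terms of the lower-numbering ramification filtration $(G_{i})_{i \geq 0}$ of $G$:
\[
\Sw(W) = \frac{1}{|G_{0}|} \sum_{i \geq 1} \sum_{g \in G_{i}} \bigl(\chi_{W}(1) - \chi_{W}(g)\bigr),
\]
which follows from $\dim W - \dim W^{G_{i}} = \frac{1}{|G_{i}|} \sum_{g \in G_{i}} (\chi_{W}(1) - \chi_{W}(g))$ and the classical formula $\Sw(W) = \sum_{i \geq 1} \frac{|G_{i}|}{|G_{0}|} (\dim W - \dim W^{G_{i}})$. Substituting the character of $V$, the identity $\Sw(V) = \Sw(\rho)$ reduces to
\[
\sum_{g \in G_{i}} \chi_{\rho}(g^{2}) = \sum_{g \in G_{i}} \chi_{\rho}(g) \quad \text{for each } i \geq 1.
\]

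The last step, and the only place where the hypothesis $p \neq 2$ enters, is the observation that for $i \geq 1$ the ramification group $G_{i}$ is contained in the wild inertia $G_{1}$ and is therefore a finite $p$-group. Since $p$ is odd, $|G_{i}|$ is odd, $2$ is invertible modulo $|G_{i}|$, and the squaring map $\psi \colon G_{i} \to G_{i}$, $g \mapsto g^{2}$, is a set-theoretic bijection (an explicit inverse being $h \mapsto h^{r}$ for any $r$ with $2r \equiv 1 \pmod{|G_{i}|}$). Re-indexing the left-hand sum by $h = g^{2}$ then matches the two sides term by term. There is essentially no obstacle in this argument; the entire role of the odd residue characteristic is to guarantee the bijectivity of squaring on each $G_{i}$, which is precisely what is needed and what will fail when $p = 2$.
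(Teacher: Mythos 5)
Your proof is correct, and it takes a genuinely different (and in fact more direct) route than the paper. Both arguments start from the character identity $\Tr(\Sym^{2}\rho)(g)-\Tr(\wedge^{2}\rho)(g)=\Tr(\rho)(g^{2})$, but the paper then reduces to $G=G_{1}$ by passing to the maximal tamely ramified subextension (which multiplies every Swan exponent by $e_{F'/F}$), notes that $\rho$ is then realizable over $\Q(\mu_{p^{r}})$, and identifies the virtual representation $\Psi^{2}\rho=\Sym^{2}\rho-\wedge^{2}\rho$ with the Galois conjugate ${}^{\tau}\rho$ for $\tau$ corresponding to $2\in(\Z/p^{r}\Z)^{\times}$, so that $\Sw(\Psi^{2}\rho)=\Sw({}^{\tau}\rho)=\Sw(\rho)$; a second proof in the paper instead uses that the last non-trivial ramification subgroup is central and acts on $\Sym^{2}\rho$ and $\wedge^{2}\rho$ through the square of a character, non-trivial for odd $p$, and then compares slopes. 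You bypass both mechanisms: extending $\Sw$ additively to virtual representations, you plug the character $g\mapsto\Tr(\rho)(g^{2})$ into $\Sw(W)=\frac{1}{g_{0}}\sum_{i\geq1}\sum_{g\in G_{i}}(\chi_{W}(1)-\chi_{W}(g))$ and observe that squaring is a bijection on each $G_{i}$ with $i\geq1$, these being $p$-groups of odd order (your explicit inverse $h\mapsto h^{r}$ with $2r\equiv1\pmod{|G_{i}|}$ is valid). This needs neither the tame base-change reduction nor the cyclotomic rationality/Galois-twist input, and it isolates exactly the combinatorial fact, bijectivity of squaring on the wild ramification subgroups, that fails when $p=2$; what the paper's first proof buys in exchange is the stronger structural statement that $\Psi^{2}\rho$ is an actual representation, namely a Galois conjugate of $\rho$, rather than merely having the same Swan exponent.
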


Note that if $\rho$ is self-dual, then we have $\rho\otimes\rho^{\vee}\cong\rho\otimes\rho\cong\Sym^{2}\rho\oplus\wedge^{2}\rho$.
Thus, by combining Proposition \ref{prop:p-odd-intro} with the explicit formula of $\Sw(\rho\otimes\rho)$ by Bushnell--Henniart--Kutzko mentioned above, it is possible to get a precise formula of $\Sw(\Sym^{2}\rho)$ and $\Sw(\wedge^{2}\rho)$.
Especially, when $\rho$ furthermore satisfies some additional conditions, the resulting formula is particularly simple as follows:

\begin{cor}[Corollary \ref{cor:p-odd}]\label{cor:p-odd-intro}
Suppose that $p\neq2$.
Let $\rho$ be a $2n$-dimensional irreducible self-dual Galois representation such that $(\Sw(\rho),2n)=1$.
Then we have
\[
\Sw(\Sym^{2}\rho)=n\cdot\Sw(\rho)
\quad\text{and}\quad
\Sw(\wedge^{2}\rho)=(n-1)\cdot\Sw(\rho).
\]
\end{cor}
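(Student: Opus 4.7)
The plan is to combine Proposition~\ref{prop:p-odd-intro} with the Bushnell--Henniart--Kutzko (BHK) explicit formula for $\Sw(\rho\otimes\rho^{\vee})$. Since $\rho$ is self-dual, we have $\rho\otimes\rho^{\vee}\cong\rho\otimes\rho\cong\Sym^{2}\rho\oplus\wedge^{2}\rho$, giving
\[
\Sw(\Sym^{2}\rho)+\Sw(\wedge^{2}\rho)=\Sw(\rho\otimes\rho^{\vee}),
\]
while Proposition~\ref{prop:p-odd-intro} gives $\Sw(\Sym^{2}\rho)-\Sw(\wedge^{2}\rho)=\Sw(\rho)$. Solving this linear system,
\[
\Sw(\Sym^{2}\rho)=\tfrac{1}{2}\bigl(\Sw(\rho\otimes\rho^{\vee})+\Sw(\rho)\bigr),\qquad \Sw(\wedge^{2}\rho)=\tfrac{1}{2}\bigl(\Sw(\rho\otimes\rho^{\vee})-\Sw(\rho)\bigr),
\]
so the corollary reduces to verifying $\Sw(\rho\otimes\rho^{\vee})=(2n-1)\,\Sw(\rho)$ under the hypothesis $\gcd(\Sw(\rho),2n)=1$.

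For this remaining identity I would invoke the BHK formula \cite[6.5, Theorem]{BHK98-JAMS}, which in general expresses $\Sw(\rho\otimes\rho^{\vee})$ through invariants of the simple stratum $[\mathfrak{A},\cdot,0,\beta]$ attached, via the local Langlands correspondence, to the supercuspidal representation of $\GL_{2n}(F)$ corresponding to $\rho$. The coprimality assumption is equivalent to the slope $\Sw(\rho)/2n$ being in lowest terms, which forces the stratum to be maximally ramified: the subfield $E=F[\beta]$ is totally ramified of degree $2n$ over $F$, and the Howe factorization of the associated cuspidal type is as elementary as possible. In this extremal regime the BHK formula collapses to the clean expression $(2n-1)\,\Sw(\rho)$, and substituting back produces $\Sw(\Sym^{2}\rho)=n\,\Sw(\rho)$ and $\Sw(\wedge^{2}\rho)=(n-1)\,\Sw(\rho)$, as required.

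The main obstacle is this clean collapse of the BHK formula. In the \emph{essentially tame range} $p\nmid 2n$, where $E/F$ is itself tame and $\rho$ is induced from a character $\chi$ of $W_{E}$, the collapse admits a direct verification via Mackey/projection: the decomposition
\[
\rho\otimes\rho^{\vee}=\bigoplus_{g\in W_{E}\backslash W_{F}/W_{E}}\Ind_{W_{E}}^{W_{F}}(\chi\,\chi^{-g})
\]
contributes one tame summand (the trivial double coset) of Swan zero and $2n-1$ summands of Swan $\Sw(\rho)$, the latter by the genericity of the characters $\chi\,\chi^{-g}$ forced by the coprimality hypothesis. In the genuinely wild range $p\mid 2n$, which for odd $p$ means $p\mid n$, this direct approach breaks down because $\rho$ is no longer induced from a character in the naive sense, and one must invoke the finer stratum-theoretic invariants of \cite[\S6]{BHK98-JAMS}; the same clean answer $(2n-1)\Sw(\rho)$ emerges through a cancellation of wild-discriminant contributions against character-level corrections. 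This wild range is the technical heart of the argument.
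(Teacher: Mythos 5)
Your proposal is correct and follows essentially the same route as the paper: self-duality gives $\Sw(\Sym^{2}\rho)+\Sw(\wedge^{2}\rho)=\Sw(\rho\otimes\rho^{\vee})$, Proposition \ref{prop:p-odd} gives the difference, and the coprimality hypothesis is fed into the Bushnell--Henniart--Kutzko formula to get $\Sw(\rho\otimes\rho^{\vee})=(2n-1)\Sw(\rho)$. The paper simply cites \cite[6.5, Theorem]{BHK98-JAMS} (see also \cite[3.5]{BH19}) for that last identity, so the additional stratum-theoretic and Mackey-type justification you sketch, while in the right spirit, is not needed for the argument.
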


In fact, the equality of Proposition \ref{prop:p-odd-intro} has been already obtained in \cite{AM16}, thus not new.
Our contribution is that we gave an alternative and simpler proof.
See Remark \ref{rem:AM16} for more comments on Proposition \ref{prop:p-odd-intro} and Corollary \ref{cor:p-odd-intro}.

We note that the assumption that $p\neq2$ is crucially used in the proof of Proposition \ref{prop:p-odd-intro}.
In fact, the equality of Proposition \ref{prop:p-odd-intro} does not hold in general when $p=2$.
For example, we have the following:
\begin{prop}[Proposition \ref{prop:ssc-p-2}]\label{prop:ssc-p-2-intro}
Suppose that $p=2$.
Let $\rho$ be a $(2n+1)$-dimensional irreducible self-dual Galois representation such that $(\Sw(\rho),2n+1)=1$.
Then we have
\[
\Sw(\Sym^{2}\rho)-\Sw(\wedge^{2}\rho)=0.
\]
\end{prop}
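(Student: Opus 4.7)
The plan is to realize $\rho$ as an induced representation and then to exploit self-duality at the level of the wild inertia $P = P_F = P_L$. Since $\dim\rho = 2n+1$ is coprime to $p = 2$ and $(\Sw(\rho), 2n+1) = 1$, the essentially tame local Langlands correspondence of Bushnell--Henniart (via Bushnell--Kutzko's theory of simple strata) furnishes a tame totally ramified extension $L/F$ of degree $m = 2n+1$ together with a character $\xi$ of $W_L$ such that $\rho \cong \Ind_L^F\xi$.

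Using the identity $\chi_{\Sym^2\rho}(g) - \chi_{\wedge^2\rho}(g) = \chi_\rho(g^2)$ and the character formula for induction, and noting that $P \subset W_L$ forces $x^{-1}gx \in P \subset W_L$ for every $g \in P$ and $x \in W_F$, I compute, for $g \in P$,
\[
\chi_{\Sym^2\rho}(g) - \chi_{\wedge^2\rho}(g)
= \sum_{x \in W_F/W_L}\xi\bigl((x^{-1}gx)^2\bigr)
= \chi_{\Ind_L^F\xi^2}(g).
\]
Because the Swan exponent of a virtual representation depends only on its character on $P$, and because the Artin conductor formula applied to a tame totally ramified extension gives $\Sw(\Ind_L^F\chi) = \Sw_L(\chi)$ for any character $\chi$ of $W_L$, the claim reduces to showing $\xi^2|_P = 1$.

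To establish this, define $\zeta_x \in \widehat{P}$ by $\zeta_x(g) \coloneqq \xi(x^{-1}gx)$, so that $\rho|_P = \bigoplus_{x \in W_F/W_L}\zeta_x$. The self-duality $\rho \cong \rho^\vee$ forces $\chi_\rho(g) = \chi_\rho(g^{-1})$ for $g \in P$, i.e., $\sum_x \zeta_x = \sum_x \zeta_x^{-1}$ as functions on $P$; by linear independence of characters, the multiset $\{\zeta_x\}_{x \in W_F/W_L}$ is stable under the involution $\zeta \mapsto \zeta^{-1}$. Since this multiset has odd cardinality $m = 2n+1$, a parity argument produces some $x_0$ with $\zeta_{x_0} = \zeta_{x_0}^{-1}$, i.e., $\xi(x_0^{-1}gx_0)^2 = 1$ for every $g \in P$. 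As $g \mapsto x_0^{-1}gx_0$ is a bijection of $P$, this yields $\xi^2|_P = 1$, as required.

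The main obstacle is the structural first step: producing the realization $\rho \cong \Ind_L^F\xi$ relies on nontrivial input from the theory of simple strata (or equivalently the essentially tame local Langlands correspondence). Once that is in hand, the rest of the argument is an elementary combination of the character computation above with a parity observation on the multiset of ``wild break characters'' of $P$, with the hypothesis that $2n+1$ is odd entering only through the final parity step.
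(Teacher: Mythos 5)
Your proof is correct, and it shares the paper's skeleton --- realize $\rho$ as $\Ind_{L/F}\xi$ with $L/F$ tamely ramified of degree $2n+1$, and show that the restriction of $\rho$ to wild inertia is a sum of quadratic characters --- but it differs at the key step. The paper cites Bushnell--Henniart's classification of self-dual essentially tame representations (\cite[Section 3]{BH11}), which hands over a \emph{quadratic} inducing character directly; you use only the weaker fact that $\rho$ is monomial over a tame degree-$(2n+1)$ extension (which holds because $2n+1$ is odd: the irreducible constituents of $\rho|_{P}$ have $2$-power dimension dividing $2n+1$, hence are characters), and then you recover the required quadraticity on $P$ from self-duality via your parity argument on the odd-cardinality multiset $\{\zeta_x\}$, which is sound: the involution $\zeta\mapsto\zeta^{-1}$ preserves the multiset, pairs of non-fixed characters contribute evenly, and normality of $P$ in $W_F$ turns $\zeta_{x_0}^2=1$ into $\xi^2|_{P}=1$. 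In effect your parity step is a self-contained substitute for the self-dual part of the Bushnell--Henniart classification, and your final conductor computation with $\Ind_{L/F}(\xi^{2})$ reproves the relevant direction of Proposition \ref{prop:weak-strict}, which is how the paper concludes. Two small caveats: invoking the essentially tame local Langlands correspondence and simple strata is heavier machinery than needed, since the monomial-over-tame realization is purely Galois-theoretic (Clifford theory plus the fact that $1$-dimensional characters of $P$ are stable under inner automorphisms); and ``totally ramified'' is asserted but neither justified nor used --- it does follow from $(\Sw(\rho),2n+1)=1$, a hypothesis your argument otherwise never invokes, so your proof in fact gives the vanishing for every irreducible self-dual $\rho$ of odd dimension when $p=2$.
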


Hence, by the same argument as in the case where $p\neq2$ (i.e., using the Bushnell--Henniart--Kutzko formula), we obtain the following:

\begin{cor}[Corollary \ref{cor:ssc-p-2}]
With the same assumptions as in Proposition \ref{prop:ssc-p-2-intro}, we have
\[
\Sw(\Sym^{2}\rho)
=\Sw(\wedge^{2}\rho)
=n\cdot\Sw(\rho).
\]
\end{cor}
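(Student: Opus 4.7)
The plan is to deduce this corollary from Proposition \ref{prop:ssc-p-2-intro} in exactly the same fashion as Corollary \ref{cor:p-odd-intro} was deduced from Proposition \ref{prop:p-odd-intro}. The two inputs are (i) the self-duality decomposition of $\rho\otimes\rho$, and (ii) the Bushnell--Henniart--Kutzko formula for $\Sw(\rho\otimes\rho^{\vee})$ from \cite{BHK98-JAMS}.

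First, since $\rho$ is self-dual, $\rho\otimes\rho^{\vee}\cong\rho\otimes\rho\cong\Sym^{2}\rho\oplus\wedge^{2}\rho$, so that
\[
\Sw(\rho\otimes\rho^{\vee})=\Sw(\Sym^{2}\rho)+\Sw(\wedge^{2}\rho).
\]
Next, evaluate the left-hand side via the Bushnell--Henniart--Kutzko formula. The hypothesis that $\rho$ is irreducible of dimension $N=2n+1$ with $(\Sw(\rho),N)=1$ forces $\rho$ to be totally wildly ramified and minimal (the slope $\Sw(\rho)/N$ is already in lowest terms, so no unramified twist can decrease $\Sw(\rho)$), which is precisely the regime in which the BHK formula collapses to
\[
\Sw(\rho\otimes\rho^{\vee})=(N-1)\Sw(\rho)=2n\cdot\Sw(\rho).
\]
This step is valid for every residue characteristic; it is insensitive to the $p=2$ issue that obstructed the odd-$p$ argument. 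Combining with Proposition \ref{prop:ssc-p-2-intro}, which asserts $\Sw(\Sym^{2}\rho)-\Sw(\wedge^{2}\rho)=0$ under the very same hypotheses, the linear system
\[
\Sw(\Sym^{2}\rho)+\Sw(\wedge^{2}\rho)=2n\cdot\Sw(\rho),\qquad
\Sw(\Sym^{2}\rho)-\Sw(\wedge^{2}\rho)=0
\]
solves to yield $\Sw(\Sym^{2}\rho)=\Sw(\wedge^{2}\rho)=n\cdot\Sw(\rho)$.

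There is no real obstacle here: all of the genuine mathematical content is packaged into Proposition \ref{prop:ssc-p-2-intro} (which repairs the broken $\Sw(\Sym^{2}\rho)-\Sw(\wedge^{2}\rho)=\Sw(\rho)$ identity in the $p=2$, odd-dimensional case) and into the cited BHK formula. The only point requiring a sentence of verification is that the coprimality hypothesis genuinely places $\rho$ in the minimal, totally wild regime where BHK reduces to the clean expression $(N-1)\Sw(\rho)$; this is standard and parallels the analogous remark made in the $p$ odd case.
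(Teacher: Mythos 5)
Your proposal is correct and follows essentially the same route as the paper: the paper deduces this corollary by the same argument as Corollary \ref{cor:p-odd}, namely the decomposition $\rho\otimes\rho^{\vee}\cong\Sym^{2}\rho\oplus\wedge^{2}\rho$ for self-dual $\rho$, the Bushnell--Henniart--Kutzko formula giving $\Sw(\rho\otimes\rho^{\vee})=2n\cdot\Sw(\rho)$ under the coprimality hypothesis, and Proposition \ref{prop:ssc-p-2} to split the sum evenly. Your added remark that the coprimality places $\rho$ in the regime where the BHK formula collapses to $(N-1)\Sw(\rho)$, independently of $p$, is exactly the (implicit) justification the paper relies on.
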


On the other hand, there is also an example such that $\Sw(\Sym^{2}\rho)-\Sw(\wedge^{2}\rho)$ is not zero (for example, take $\rho$ to be a $1$-dimensional character such that $\rho^{2}$ is wildly ramified).
Then what can we expect about the quantity $\Sw(\Sym^{2}\rho)-\Sw(\wedge^{2}\rho)$ when $p=2$?
Our second main result is the following:

\begin{thm}[Theorem \ref{thm:p-2-weak-ineq}]\label{thm:p-2-weak-ineq-intro}
Suppose that $p=2$.
For any Galois representation $\rho$, we have
\[
0
\leq
\Sw(\Sym^{2}\rho)-\Sw(\wedge^{2}\rho)
\leq
2\Sw(\rho).
\]
\end{thm}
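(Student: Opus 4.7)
The plan is to deduce both inequalities from a pointwise estimate along the ramification filtration, the key identity being provided by the Frobenius--Schur indicator. Writing $(G^{u})_{u \geq 0}$ for the upper-numbering ramification filtration of $G = \Gal(E/F)$, the standard integral formula for the Swan exponent reads
\[
\Sw(\rho) = \int_{0}^{\infty} \bigl(\dim V - \dim V^{G^{u}}\bigr)\,du,
\]
and analogously for $\Sw(\Sym^{2}\rho)$ and $\Sw(\wedge^{2}\rho)$. It therefore suffices to prove, for every $u > 0$ and with $H := G^{u}$, $n := \dim V$, the pointwise double inequality
\[
0 \;\leq\; \bigl(\dim \Sym^{2} V - \dim (\Sym^{2} V)^{H}\bigr) - \bigl(\dim \wedge^{2} V - \dim (\wedge^{2} V)^{H}\bigr) \;\leq\; 2\bigl(n - \dim V^{H}\bigr).
\]

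The key ingredient is the Frobenius--Schur identity: for any finite-dimensional representation $W$ of a finite group $H$, expanding $\chi_{\Sym^{2}W}(h) = \tfrac{1}{2}(\chi_{W}(h)^{2}+\chi_{W}(h^{2}))$ and its minus-sign analogue for $\wedge^{2}$, and averaging over $H$, gives
\[
\dim(\Sym^{2}W)^{H} - \dim(\wedge^{2}W)^{H} \;=\; \nu(W) \;:=\; \frac{1}{|H|}\sum_{h \in H} \chi_{W}(h^{2}).
\]
Applied to $W = V|_{H}$, the middle quantity in the pointwise inequality above collapses to $n - \nu(V|_{H})$, and the two desired estimates reduce to the symmetric sandwich
\[
2\dim V^{H} - n \;\leq\; \nu(V|_{H}) \;\leq\; n.
\]

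To establish this, I would decompose $V|_{H} = \bigoplus_{j} V_{j}^{\oplus m_{j}}$ into $H$-isotypic components, with the trivial representation $\mathbf{1}$ appearing with multiplicity $m_{0} = \dim V^{H}$. Since $\nu$ is additive in direct sums and takes values in $\{-1, 0, +1\}$ on irreducible representations (with $\nu(\mathbf{1}) = 1$), one has
\[
\nu(V|_{H}) - \dim V^{H} \;=\; \sum_{V_{j} \neq \mathbf{1}} m_{j}\,\nu(V_{j}),
\]
whose absolute value is bounded by $\sum_{V_{j} \neq \mathbf{1}} m_{j} \leq \sum_{V_{j} \neq \mathbf{1}} m_{j}\dim V_{j} = n - \dim V^{H}$. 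This is exactly the required two-sided bound, and integrating over $u$ completes the proof.

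I do not anticipate any serious difficulty once the Frobenius--Schur identity is invoked; the main point is simply to recognize that $\dim(\Sym^{2}V)^{H} - \dim(\wedge^{2}V)^{H}$ computes the Frobenius--Schur indicator of $V|_{H}$. Two remarks are in order. First, the argument above nowhere uses $p = 2$; it gives the same bounds in any residue characteristic, consistent with the fact that for $p \neq 2$ the theorem is already subsumed by the sharper Proposition~\ref{prop:p-odd-intro}. Second, the upper bound $\nu(V|_{H}) \leq n$ is saturated only when every non-trivial isotypic component of $V|_{H}$ is one-dimensional and orthogonal self-dual, a restrictive condition on the wild inertia action that would seem to be a natural point of departure for pursuing the sharper conjectural bounds alluded to in the abstract.
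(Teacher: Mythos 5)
Your proof is correct and is essentially the paper's argument: both establish the two-sided bound pointwise at each ramification subgroup and then sum (or integrate) over the filtration, with the key input being that for each irreducible constituent of the restriction the difference $\dim(\Sym^{2})^{H}-\dim(\wedge^{2})^{H}$ lies in $\{-1,0,1\}$. The only cosmetic difference is that you phrase this via the Frobenius--Schur indicator and the upper-numbering integral formula, while the paper uses additivity plus $\dim((\rho\otimes\rho)^{G_{i}})\leq 1$ in lower numbering.
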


Note that the left inequality of Theorem \ref{thm:p-2-weak-ineq-intro} is ``sharp'' in the sense that there exists $\rho$ which attains the equality (Proposition \ref{prop:ssc-p-2-intro}; see also Proposition \ref{prop:weak-strict}).
On the other hand, the right inequality is not sharp whenever $\rho$ is not tame (Proposition \ref{prop:weak-strict}).
In fact, we expect that the following inequality holds and is sharp.

\begin{conj}\label{conj:Henniart-intro}
Assume $p=2$.
For any Galois representation $\rho$, we have
\[\label{ineq:conj-intro}
\Sw(\Sym^{2}\rho)-\Sw(\wedge^{2}\rho)
\leq
\Sw(\rho).
\tag{$\star$}
\]
\end{conj}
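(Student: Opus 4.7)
We proceed by induction on $\dim\rho$. The identities
\begin{align*}
\Sym^{2}(\rho_{1}\oplus\rho_{2}) &\cong \Sym^{2}\rho_{1}\oplus\Sym^{2}\rho_{2}\oplus(\rho_{1}\otimes\rho_{2}),\\
\wedge^{2}(\rho_{1}\oplus\rho_{2}) &\cong \wedge^{2}\rho_{1}\oplus\wedge^{2}\rho_{2}\oplus(\rho_{1}\otimes\rho_{2})
\end{align*}
show that the difference $\Sw(\Sym^{2}\rho)-\Sw(\wedge^{2}\rho)$ is additive over direct summands of $\rho$, as is $\Sw(\rho)$; so we may assume $\rho$ is irreducible. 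The base case $\dim\rho=1$, $\rho=\chi$, reduces to $\Sw(\chi^{2})\leq\Sw(\chi)$, which at $p=2$ follows from $(1+x)^{2}\in 1+\mathfrak{p}_{F}^{\val_{F}(x)+1}$ for $x\in\mathfrak{p}_{F}$: squaring never raises the conductor filtration level of $\chi$.

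\textbf{Inductive step via $\Ind_{H}^{G}$.} For irreducible $\rho$ of dimension $\geq 2$, the Koch--Howe classification realizes $\rho$ as $\Ind_{W_{E}}^{W_{F}}\chi$ for a finite separable extension $E/F$ of degree $n$ and an admissible quasi-character $\chi$ of $E^{\times}$. We would induct using an intermediate field: write $\rho\cong\Ind_{H}^{G}\sigma$ where $[G:H]$ is a prime $\ell\mid n$; the primitive case (when no such intermediate $H$ exists, i.e.\ $n$ is prime) must be treated separately by direct computation with the admissible pair. For odd $\ell$ the analysis should mimic Proposition \ref{prop:p-odd-intro}, so the essential case is $\ell=2$. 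Let $\epsilon$ denote the non-trivial character of $G/H$ and fix $g\in G\setminus H$. Mackey and the projection formula give
\[
\rho\otimes\rho\cong\Ind_{H}^{G}(\sigma\otimes\sigma)\oplus\Ind_{H}^{G}(\sigma\otimes\sigma^{g}),
\]
and each summand is stable under the swap involution $\tau$. A direct computation shows that $\tau$ decomposes the first summand as $\Ind_{H}^{G}\Sym^{2}\sigma\oplus\Ind_{H}^{G}\wedge^{2}\sigma$ and the second as $M^{+}\oplus M^{-}$, where $M^{\pm}|_{H}\cong\sigma\otimes\sigma^{g}$ as $H$-modules but $M^{-}\cong M^{+}\otimes\epsilon$ as $G$-modules. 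Hence
\[
\Sw(\Sym^{2}\rho)-\Sw(\wedge^{2}\rho)=\bigl[\Sw(\Ind_{H}^{G}\Sym^{2}\sigma)-\Sw(\Ind_{H}^{G}\wedge^{2}\sigma)\bigr]+\bigl[\Sw(M^{+})-\Sw(M^{+}\otimes\epsilon)\bigr].
\]
The first bracket is controlled by the induction hypothesis for $\sigma$ together with the Swan induction formula, and naturally fits within the decomposition $\Sw(\rho)=2\Sw(\sigma)+\dim(\sigma)\cdot\val_{F}\disc(L/F)$, where $L$ is the fixed field of $H$.

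\textbf{Main obstacle.} The decisive step is a sharp bound on the cross-term $\Sw(M^{+})-\Sw(M^{+}\otimes\epsilon)$. The crude estimate $\dim(\sigma\otimes\sigma^{g})\cdot\Sw(\epsilon)$ is exactly twice what \eqref{ineq:conj-intro} permits, which is precisely where Theorem \ref{thm:p-2-weak-ineq-intro} loses the factor of $2$. The improvement should come from a ``balanced'' behavior of the irreducible constituents of $M^{+}$ under the $\epsilon$-twist: upward Swan shifts in some constituents should be compensated by downward shifts in others. A natural source of such balance is the canonical pairing $\sigma\otimes\sigma^{g}\to\Nm_{L/F}(\det\sigma)$, which isolates a distinguished character constituent whose $\epsilon$-twist is governed by local class field theory. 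Making this precise will likely require the explicit Bushnell--Henniart--Kutzko formulas for the Swan exponents of tensor products of supercuspidal representations, combined with a case analysis based on the ramification of $L/F$. Finally, sharpness of \eqref{ineq:conj-intro} should be verified separately by exhibiting an explicit $\rho$ attaining equality.
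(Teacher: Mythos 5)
Your proposal does not prove the statement, and indeed it cannot be assessed as a proof: the statement is Conjecture \ref{conj:Henniart-intro} of the paper, which the paper itself does not prove but only verifies in special cases (dimension $2$, inductions along cyclic extensions, and certain configurations of the ramification filtration). Your outline is essentially the paper's own ``induction on $\dim\rho$'' discussion in Section \ref{subsec:induction-dimension}: after writing $\rho=\Ind_{H}^{G}(\sigma)$ with $[G:H]=2$, the decomposition you describe is exactly Proposition \ref{prop:Ind-Sym-Ext} (your $M^{+}$ is the extension $\tau$ of $\sigma\otimes\sigma^{g}$, and $M^{-}=\tau\otimes\epsilon$), and the whole difficulty is the cross-term $\Sw(\tau\otimes\epsilon)-\Sw(\tau)$. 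You acknowledge this as the ``main obstacle'' but offer only a heuristic (``balanced'' shifts, a pairing onto $\Nm_{L/F}(\det\sigma)$) rather than an argument. The paper points out precisely why this term is genuinely problematic: when $\sigma$ is not self-dual, $\sigma\otimes\sigma^{g}$ contains the trivial character of $H$, so $\tau$ has a constituent of slope $0$ and the $\epsilon$-twist strictly increases its Swan exponent; the induction hypothesis for $\sigma$ alone is then insufficient, and one would need a strictly stronger inequality for non-self-dual $\sigma$ (compare also the symplectic obstruction in Section \ref{subsec:induction-order}). So the decisive step of your argument is missing, and it is exactly the step that keeps the statement a conjecture.

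There is also an error in your reduction to the monomial situation. The Koch--Howe/admissible-pair parametrization classifies essentially tame representations, i.e.\ those with $p\nmid\dim\rho$; for $p=2$ the critical representations have $2$-power dimension, and there exist primitive (non-monomial) irreducible representations of $W_{F}$ (already in dimension $2$ over $\Q_{2}$, with projective image $A_{4}$ or $S_{4}$), so ``the primitive case \ldots treated by direct computation with the admissible pair'' is vacuous. The correct move, as in the paper, is first to restrict to $G_{1}$ (which multiplies $\Sw(\rho)$, $\Sw(\Sym^{2}\rho)$, $\Sw(\wedge^{2}\rho)$ by the same factor $e_{F'/F}$, see Section \ref{sssec:tame}), so that $G$ becomes a $2$-group, where every irreducible is monomial; but then $H$ is an index-$2$ subgroup of the wild group, not the Weil group of an extension of $F$ attached to an admissible pair. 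Finally, your bookkeeping formula $\Sw(\rho)=2\Sw(\sigma)+\dim(\sigma)\cdot\val_{F}\disc(L/F)$ is incorrect in the relevant (wildly ramified quadratic) case: by Lemma \ref{lem:Ind-Swan} one has $\Sw(\rho)=\Sw(\sigma)+\Sw(\omega_{L/F})\cdot\dim(\sigma)$, with no factor $2$ on $\Sw(\sigma)$; the factor $2$ occurs only for $L/F$ unramified, where the discriminant term vanishes.
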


In this paper, we verify Conjecture \ref{conj:Henniart-intro} in several special cases and also discuss attempts to the conjecture in the general case.
\begin{enumerate}
\item
In Section \ref{subsec:2-dim}, we prove the inequality \eqref{ineq:conj-intro} for any $2$-dimensional $\rho$.
\item
In Section \ref{subsec:cyclic}, we prove the inequality \eqref{ineq:conj-intro} for any $\rho$ of the form $\Ind_{F'/F}\chi$, where $F'/F$ is cyclic and $\chi$ is a character of $F^{\prime\times}$.
\item
In Section \ref{subsec:approach-ramif-group}, we discuss an approach based on an analysis of the structure of the ramification subgroups.
We also obtain definitive results in some specific cases (Propositions \ref{prop:I-order-4} and \ref{prop:I-equal-Z-and-H-irr}).
\item
In Section \ref{subsec:induction-order}, we discuss an approach via induction on the order of the Galois group $G$.
Especially, we show that if the conjecture is true, then the inequality \eqref{ineq:conj-intro} must be strict when $\rho$ is orthogonal (\ref{prop:orth-strict-inequality}).
\item
In Section \ref{subsec:induction-dimension}, we discuss an approach via induction on $\dim(\rho)$.
\end{enumerate}

We finally mention that, in fact, our motivation originates from investigating the \textit{explicit local Langlands correspondence for classical groups}.
When $\G$ is a connected reductive group over $F$, the local Langlands correspondence attaches to a smooth irreducible representation $\pi$ of $\G(F)$ an $L$-parameter $\phi$ ($L$-parameters are a variant of Galois representations, see Section \ref{subsec:LLC}).
When $\pi$ is discrete series, it is expected that a certain explicit identity is satisfied between the formal degree of $\pi$ and the special value of the adjoint $\gamma$-factor of $\phi$ (the \textit{formal degree conjecture}, see Section \ref{subsec:FDC}).
The point is that, when $\G$ is a classical group, the $L$-parameter $\phi$ can be naturally regarded as a Galois representation (say $\rho$) and then the adjoint $\gamma$-factor of $\phi$ is related to the Swan exponent $\Sw(R\circ\rho)$ for some tensor operation $R$.
For example, if $\G=\SO_{2n+1}$ (resp.\ $\Sp_{2n}$ or $\SO_{2n}$), then $R$ is given by $\Sym^{2}$ (resp.\ $\wedge^{2}$).
\[
\xymatrix{
\text{$\pi$: irr.\ sm.\ rep.\ of $\G(F)$}\ar@{<->}^-{\LLC}[rr]\ar@{~>}[d]&&\text{$\phi$: $L$-par.} \ar@{~>}[d] \ar@{<->}[r]&\text{$\rho$: Gal.\ rep.} \ar@{~>}[d]\\
\text{formal degree of $\pi$} \ar@{<->}^-{\mathrm{FDC}}[rr]&& \text{adjoint $\gamma$-factor of $\phi$} \ar@{<->}[r] & \Sw(R\circ\rho)
}
\]
Thus, for a given $L$-parameter $\phi$, computing $\Sw(R\circ\rho)$ is related to knowing the formal degree of $\pi$, which can be a clue to determine the representation $\pi$ exactly (and vice versa).
In Section \ref{sec:LLC}, we present a few results which are obtained based on this philosophy.

\medbreak
\noindent{\bfseries Acknowledgment.}\quad 
The authors would like to thank Gordan Savin for helpful suggestions on the content of Section \ref{sec:G2}.
This paper was written as the first author was enjoying the hospitality of the Graduate School of Mathematical Sciences of the University of Tokyo. 
He also thanks the Department of Mathematics of Kyoto University for an invitation.
The second author was supported by JSPS KAKENHI Grant Number 20K14287.

\medbreak
\noindent{\bfseries Notation.}\quad 
For a non-Archimedean local field $F$, we write $\mcO_{F}$, $\mfp_{F}$, and $k_{F}$ for the ring of integers, the maximal ideal, and the residue field of $F$, respectively.
For any $r\in\Z_{>0}$, we let $U_{F}^{r}:=1+\mfp_{F}^{r}$.

\section{Main Results}\label{sec:main}

\subsection{Swan exponent of Galois representations}\label{subsec:Swan}
We first recall the definition of the Swan exponent of a local Galois representation.
The contents of this subsection are based on \cite[Chapter VI]{Ser79} and \cite[Chapter 19]{Ser77}.

Let $F$ be a non-Archimedean local field with residue characteristic $p>0$.
Let $E$ be a finite Galois extension of $F$.
Let $G:=\Gal(E/F)$ and $\{G_{i}\}_{\in\Z_{\geq0}}$ be the filtration by the lower ramification subgroups of $G$; recall that $\{G_{i}\}_{i\geq0}$ is decreasing and satisfies $G_{i}=\{1\}$ for sufficiently large $i$ (see \cite[Chapter IV]{Ser79}).
We write $g_{i}$ for the cardinality of $G_{i}$.

Suppose that $\rho$ is a representation of $G$ on a finite-dimensional $\C$-vector space $V$.
We simply call such $\rho$ a \textit{Galois representation (of $G$)}.
Then the \textit{Swan exponent} of $\rho$ is defined by
\[
\Sw(\rho)
:= \sum_{i\geq1} \frac{g_{i}}{g_{0}}\dim(V/V^{G_{i}}).
\]
Similarly, the \textit{Artin exponent} of $\rho$ is defined by
\[
\Art(\rho)
:= \sum_{i\geq0} \frac{g_{i}}{g_{0}}\dim(V/V^{G_{i}}).
\]

Let us review several basic properties of $\Sw(\rho)$ which will be useful to us.

\subsubsection{Swan exponent as a scalar product}\label{sssec:Swan}

The Swan exponent of $\rho$ can be also thought of as the scalar product of the character $\Tr(\rho)$ with the character $\mcS$ of the Swan representation ``$sw_{G}$'' (see \cite[Chapter 19]{Ser77}):
\[
\Sw(\rho)=\langle\Tr(\rho),\mcS\rangle.
\]

\subsubsection{Swan exponent via upper ramification filtration}\label{sssec:upper}

For a Galois representation $\rho$, we define the number $\slope(\rho)$ by
\[
\slope(\rho):=\inf\{s\in\R_{\geq0} \mid \text{$\rho$ is trivial on $G^{s}$}\},
\]
where $\{G^{r}\}_{r\geq0}$ is the filtration  of $G$ by the upper ramification subgroups (see \cite[Section IV.3]{Ser79} for the definition of the upper ramification subgroups).
When $\rho$ is irreducible, we have 
\[
\Sw(\rho)=\dim(\rho)\cdot\slope(\rho).
\]
Indeed, it is obvious when $\rho$ is trivial on $G_{0}=G^{0}$, so let us consider the case where $\rho$ is non-trivial on $G_{0}$.
As each $G_{i}$ is a normal subgroup of $G$, $V^{G_{i}}$ is a subspace of $V$ stable under the action of $G$.
Thus, by letting $m\in\Z_{\geq0}$ be the largest integer such that $G_{m}$ is non-trivial, the irreducibility of $\rho$ implies that
\[
V^{G_{i}}=
\begin{cases}
0&\text{if $0\leq i\leq m$},\\
V&\text{if $m< i$}.
\end{cases}
\]
Hence we have
\[
\Sw(\rho)
=
\dim(V)\sum_{i=1}^{m} \frac{g_{i}}{g_{0}}.
\]
The sum on the right-hand side is nothing but the value $\varphi_{E/F}(m)$ of the Herbrand function $\varphi_{E/F}$ with respect to $G=\Gal(E/F)$ at $m$ (see \cite[Section IV.3]{Ser79}).
By the definition of the upper ramification filtration, we have 
\[
G^{\varphi_{E/F}(m)}=G_{m}
\quad\text{and}\quad
G^{\varphi_{E/F}(m)+\varepsilon}=G_{m+1}
\]
for any sufficiently small positive number $\varepsilon$.
In other words, we have $\slope(\rho)=\varphi_{E/F}(m)=\Sw(\rho)/\dim(\rho)$.

Note that, by the definition of the slope, we have the following:
\begin{lem}\label{lem:Swan-twist}
Let $\rho_{1}$ and $\rho_{2}$ be irreducible Galois representations of $G$.
Then we have
\[
\slope(\rho_{1}\otimes\rho_{2})\leq \max\{\slope(\rho_{1}), \slope(\rho_{1})\},
\]
where the equality holds if $\slope(\rho_{1})\neq \slope(\rho_{2})$.
\end{lem}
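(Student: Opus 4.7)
The plan is to argue directly from the definition $\slope(\rho) := \inf\{s \in \R_{\geq 0} \mid \rho|_{G^s} \text{ is trivial}\}$, using only the monotonicity of the upper ramification filtration: $G^s \subseteq G^{s'}$ whenever $s \geq s'$. The preliminary observation I will rely on throughout is that for any Galois representation $\rho$, the set $A(\rho) := \{s \geq 0 \mid \rho|_{G^s} = 1\}$ is upward-closed, hence an interval with infimum $\slope(\rho)$; consequently $\rho|_{G^s}$ is trivial for every $s > \slope(\rho)$ and non-trivial for every $s < \slope(\rho)$. This single fact drives both halves of the lemma.

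First, to prove the upper bound, I would set $s := \max\{\slope(\rho_1),\slope(\rho_2)\}$ and pick any real $s' > s$. Since both $\rho_i|_{G^{s'}}$ are trivial, for $g \in G^{s'}$ one has $(\rho_1 \otimes \rho_2)(g) = \rho_1(g) \otimes \rho_2(g) = \mathrm{Id}$, so $\rho_1 \otimes \rho_2$ is trivial on $G^{s'}$; letting $s' \to s^+$ yields $\slope(\rho_1 \otimes \rho_2) \leq s$.

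Second, for the equality statement when the slopes differ, I would assume without loss of generality that $s_1 := \slope(\rho_1) > \slope(\rho_2) =: s_2$ and pick any $s' \in (s_2, s_1)$. By the preliminary observation, $\rho_2|_{G^{s'}}$ is trivial while $\rho_1|_{G^{s'}}$ is not, so I can find $g \in G^{s'}$ with $\rho_1(g) \neq \mathrm{Id}$. Then $(\rho_1 \otimes \rho_2)(g) = \rho_1(g) \otimes \mathrm{Id}$ is not the identity of $V_1 \otimes V_2$, so $\rho_1 \otimes \rho_2$ is not trivial on $G^{s'}$, giving $\slope(\rho_1 \otimes \rho_2) \geq s'$. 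Letting $s' \to s_1^-$ and combining with the upper bound yields $\slope(\rho_1 \otimes \rho_2) = s_1 = \max\{s_1, s_2\}$.

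I do not anticipate any serious obstacle. The only point requiring care is that $\slope(\rho)$ is defined as an infimum that need not be attained, so one must consistently work with strict inequalities $s > \slope(\rho)$ or $s < \slope(\rho)$ and pass to limits at the end. It is also worth noting that the irreducibility hypothesis on $\rho_1$ and $\rho_2$ plays no role in this particular lemma; it presumably appears only because $\slope$ is most cleanly tied to $\Sw$ through the identity $\Sw(\rho) = \dim(\rho) \cdot \slope(\rho)$ of Section \ref{sssec:upper}, which does require irreducibility.
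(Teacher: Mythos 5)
Your proof is correct and is exactly the argument the paper has in mind: the lemma is stated there with no proof beyond the remark that it follows ``by the definition of the slope,'' and your write-up is the straightforward expansion of that remark, using monotonicity of the upper ramification filtration and careful handling of the infimum (you also rightly read the paper's $\max\{\slope(\rho_{1}),\slope(\rho_{1})\}$ as a typo for $\max\{\slope(\rho_{1}),\slope(\rho_{2})\}$). Your side observation that irreducibility is not needed here is also accurate.
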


\subsubsection{Behavior under a tamely ramified extension}\label{sssec:tame}

Let $F'$ be a tamely ramified extension of $F$ contained in $E$.
Note that this condition is equivalent to that the corresponding subgroup $G':=\Gal(E/F')$ of $G=\Gal(E/F)$ contains $G_{1}$.
Let $\{G'_{i}\}_{i\in\Z_{\geq0}}$ be the filtration of $G'$ by the lower ramification subgroups of $G'$.
By noting that $G'\supset G_{1}$ and $G'_{i}=G'\cap G_{i}$ for any $i\in\Z_{\geq0}$ (\cite[Chapter IV, Proposition 2]{Ser79}), we have $G'_{i}=G_{i}$ for any $i\in\Z_{\geq1}$.
This implies that
\[
\Sw(\rho|_{G'})
=
\frac{g_{0}}{g'_{0}}\cdot\Sw(\rho),
\]
where $g'_{0}$ denotes the cardinality of $G'_{0}$.
In particular, by letting $F'$ be the maximal tamely ramified extension of $F$ in $E$ so that $G'=G_{1}$, we have
\[
\Sw(\rho|_{G_{1}})
=\frac{g_{0}}{g_{1}}\cdot\Sw(\rho)
=e_{F'/F}\cdot\Sw(\rho),
\]
where $e_{F'/F}$ denotes the ramification index of $F'/F$.

\subsubsection{Behavior under the induction}\label{sssec:Induction}

Let $F'$ be an extension of $F$ contained in $E$.
For any finite-dimensional representation $\sigma$ of the Galois group $\Gal(E/F')$, we have
\[
\Sw(\Ind_{F'/F}\sigma)
=
f_{F'/F}\cdot\Sw(\sigma)+(v_{F}(\mfd_{F'/F})-[F':F]+f_{F'/F})\cdot\dim(\sigma),
\]
where $f_{F'/F}$ denotes the residue degree of $F'/F$ and $\mfd_{F'/F}$ is the discriminant of $F'/F$ (see \cite[Corollary of Proposition 4, Chapter VI]{Ser79}, in which the formula is stated in terms of Artin exponents).

In particular, we have the following:
\begin{lem}\label{lem:Ind-Swan}
Suppose that $F'/F$ is a ramified quadratic extension of dyadic fields.
Then we have
\[
\Sw(\Ind_{F'/F}\sigma)
=
\Sw(\sigma)+\Sw(\omega_{F'/F})\cdot\dim(\sigma),
\]
where $\omega_{F'/F}$ denotes the quadratic character of $F^{\times}$ corresponding to $F'/F$.
\end{lem}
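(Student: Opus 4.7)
The plan is to derive the lemma as a direct specialization of the induction formula stated just above it (\cite[Chapter VI, Proposition 4]{Ser79}). Since $F'/F$ is a ramified quadratic extension, we have $[F':F] = 2$, $e_{F'/F} = 2$, and $f_{F'/F} = 1$, so the general formula immediately reduces to
\[
\Sw(\Ind_{F'/F}\sigma) = \Sw(\sigma) + \bigl(v_{F}(\mfd_{F'/F}) - 1\bigr)\cdot\dim(\sigma).
\]
Thus the lemma amounts to the identity $v_{F}(\mfd_{F'/F}) = \Sw(\omega_{F'/F}) + 1$.

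To establish this identity, I would invoke the conductor-discriminant formula for the abelian extension $F'/F$: since $\Gal(F'/F)$ has only two characters (the trivial one and $\omega_{F'/F}$), the formula reads $v_{F}(\mfd_{F'/F}) = \Art(\omega_{F'/F})$. Equivalently, one can see this directly by comparing the lower numbering filtration on $\Gal(F'/F)$ with the definitions of the Artin conductor and the discriminant valuation (cf.\ \cite[Chapter VI, \S2]{Ser79}).

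Now because $F'/F$ is ramified, the character $\omega_{F'/F}$ is nontrivial on the inertia subgroup $G_{0}$, so by the general relationship $\Art(\rho) = \Sw(\rho) + \dim(V/V^{G_{0}})$ applied to the one-dimensional representation $\omega_{F'/F}$, we get $\Art(\omega_{F'/F}) = \Sw(\omega_{F'/F}) + 1$. Substituting this back yields $v_{F}(\mfd_{F'/F}) - 1 = \Sw(\omega_{F'/F})$, which completes the proof.

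The argument is entirely formal; no step presents a real obstacle. The only point worth double-checking is the normalization in the conductor-discriminant formula (i.e.\ that the Artin conductor of $\omega_{F'/F}$ viewed as a character of $\Gal(F'/F)$ agrees with its Artin conductor viewed as a character of $F^{\times}$ via local class field theory), but this is standard. Note that the hypothesis that $F$ is dyadic is not essential for the formula itself — it only ensures the nontriviality setting of interest in the sequel, where $\Sw(\omega_{F'/F}) \geq 1$.
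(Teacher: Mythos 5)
Your proposal is correct and follows essentially the same route as the paper: specialize the induction formula with $f_{F'/F}=1$ and $[F':F]=2$, then identify $v_{F}(\mfd_{F'/F})=\Art(\omega_{F'/F})=\Sw(\omega_{F'/F})+1$ via the conductor--discriminant formula (the paper cites \cite[Corollary 2 of Proposition 6, Chapter VI]{Ser79} for exactly this identity). Your extra remarks on normalization and on the dyadic hypothesis being inessential are accurate but not needed.
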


\begin{proof}
By applying the above formula, we have
\[
\Sw(\Ind_{F'/F}\sigma)
=
\Sw(\sigma)+(v_{F}(\mfd_{F'/F})-1)\cdot\dim(\sigma).
\]
We have $v_{F}(\mfd_{F'/F})=\Sw(\omega_{F'/F})+1$ by \cite[Corollary 2 of Proposition 6, Chapter VI]{Ser79} (note that the symbol $f(-)$ in \textit{loc.\ cit.}\ denotes the Artin exponent), hence we get the assertion.
\end{proof}

\subsection{Difference of $\Sym^{2}$ and $\wedge^{2}$ Swan exponents: the case where $p\neq2$}

In the rest of this section, we investigate the difference of the Swan exponents of the symmetric and exterior squares of a Galois representation.
We first consider the case where the residue characteristic $p$ of $F$ is odd.

Let us start with the following observation.
For any class function $f\colon G\rightarrow \C$, we define $\Psi^{2}f\colon G\rightarrow\C$ by $\Psi^{2}f(g):=f(g^{2})$.
Note that then $\Psi^{2}f$ is again a class function on $G$.
When $\rho$ is a finite-dimensional representation of $G$ whose dimension is $d$, for $g\in G$, we have
\[
\Tr(\Sym^{2}\rho)(g)-\Tr(\wedge^{2}\rho)(g)
=\sum_{i=1}^{d}\alpha_{i}^{2}
=\Tr(\rho)(g^{2})
=\Psi^{2}\Tr(\rho)(g),
\]
where $\alpha_{1},\ldots,\alpha_{d}\in \C^{\times}$ are the eigenvalues of $\rho(g)$.
In other words, $\Psi^{2}\Tr(\rho)$ is the character of the virtual representation  $\Sym^{2}\rho-\wedge^{2}\rho$ of $G$.
Let us write $\Psi^{2}\rho:=\Sym^{2}\rho-\wedge^{2}\rho$.

\begin{prop}\label{prop:p-odd}
Suppose that $p\neq2$.
For any Galois representation $\rho$, we have
\[
\Sw(\Sym^{2}\rho)-\Sw(\wedge^{2}\rho)
=
\Sw(\rho).
\]
\end{prop}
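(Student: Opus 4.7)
My plan is to compute both sides directly via the formula
\[
\Sw(\tau) = \sum_{i\geq 1}\frac{g_i}{g_0}\bigl(\dim(\tau)-\dim(\tau^{G_i})\bigr),
\qquad
\dim(\tau^{G_i})=\frac{1}{g_i}\sum_{g\in G_i}\Tr(\tau)(g),
\]
extended by linearity to the virtual representation $\Psi^{2}\rho:=\Sym^{2}\rho-\wedge^{2}\rho$, and then to exploit the character identity $\Tr(\Psi^{2}\rho)(g)=\Tr(\rho)(g^{2})$ noted just above the statement. Since $\Tr(\Psi^{2}\rho)(1)=\Tr(\rho)(1)=\dim(\rho)$, the dimension terms in $\Sw(\rho)$ and $\Sw(\Psi^{2}\rho)$ cancel in the difference, and a short manipulation gives
\[
\Sw(\rho)-\bigl(\Sw(\Sym^{2}\rho)-\Sw(\wedge^{2}\rho)\bigr)
=\sum_{i\geq 1}\frac{1}{g_0}\sum_{g\in G_i}\bigl(\Tr(\rho)(g^{2})-\Tr(\rho)(g)\bigr).
\]

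The proposition therefore reduces to showing, for every $i\geq 1$, that
\[
\sum_{g\in G_i}\Tr(\rho)(g^{2})=\sum_{g\in G_i}\Tr(\rho)(g).
\]
This is exactly where the hypothesis $p\neq 2$ enters. For $i\geq 1$ the subgroup $G_i$ is contained in the wild inertia $G_1$ and hence is a finite $p$-group; since $p$ is odd, $|G_i|$ is odd, and the squaring map $G_i\to G_i$, $g\mapsto g^{2}$, is a bijection (with inverse the $\tfrac{|G_i|+1}{2}$-th power map). Performing the change of variable $h=g^{2}$ in the left-hand sum recovers the right-hand sum, yielding the desired equality and hence the proposition.

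I do not anticipate a genuine obstacle: the only slightly subtle bookkeeping is to verify that the invariants formula for $\Sw$ extends linearly to the virtual object $\Psi^{2}\rho$, which is immediate from linearity on the character side. The actual content of the argument lies in the elementary observation that squaring permutes any odd-order group, which is precisely the feature that distinguishes $p\neq 2$ from the dyadic case treated in the rest of the paper.
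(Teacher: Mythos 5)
Your argument is correct: the reduction of the difference of Swan exponents to the identity $\sum_{g\in G_i}\Tr(\rho)(g^{2})=\sum_{g\in G_i}\Tr(\rho)(g)$ is valid (both terms of the formula are linear in the character, and $\dim\Sym^{2}\rho-\dim\wedge^{2}\rho=\dim\rho$ kills the dimension terms), and for $i\geq1$ each $G_i$ sits inside the wild inertia $G_1$, hence has odd $p$-power order, so $g\mapsto g^{2}$ is indeed a bijection of $G_i$ with inverse $g\mapsto g^{(|G_i|+1)/2}$; the change of variables then finishes the proof. This is, however, a genuinely different route from the paper. The paper first reduces to $G=G_1$ by passing to the maximal tame subextension (each side gets multiplied by the ramification index), and then either (first proof) uses that $\rho$ is realizable over $\Q(\mu_{p^r})$ to identify $\Psi^{2}\rho$ with the Galois twist ${}^{\tau}\rho$ for $\tau\colon\zeta\mapsto\zeta^{2}$, invoking the invariance of $\Sw$ under Galois twisting, or (second proof) uses that the last nontrivial ramification subgroup is central and acts by a character $\chi$ with $\chi^{2}\neq1$, so all constituents of $\Sym^{2}\rho$ and $\wedge^{2}\rho$ share the slope of $\rho$. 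Your computation bypasses both the reduction to wild inertia and the cyclotomic rationality/Galois-twist machinery, exploiting the same underlying phenomenon (invertibility of squaring on odd $p$-power order objects) directly at the level of the character sums defining $\dim V^{G_i}$; it is shorter, and it extends verbatim to any Adams-type operation $\Psi^{k}$ with $p\nmid k$, giving $\Sw(\Psi^{k}\rho)=\Sw(\rho)$. What the paper's first proof buys in exchange is the stronger structural statement that, on wild inertia, $\Psi^{2}\rho$ is isomorphic as a virtual representation to a Galois conjugate of $\rho$, and its second proof yields slope information on the constituents of $\Sym^{2}\rho$ and $\wedge^{2}\rho$ that is reused elsewhere in the paper.
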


\begin{proof}
If we replace $F$ with the maximal tamely ramified extension $F'$ of $F$ in $E$,  then each term in the equality in the assertion is multiplied by $e_{F'/F}$ (see Section \ref{sssec:tame}).
Thus, we may assume that $G=G_{1}$ from the beginning.
Note that then $G$ is a finite $p$-group.
If we let $p^{r}$ be the order of $G$, then $\rho$ is defined over $\Q(\mu_{p^{r}})$, where $\mu_{p^{r}}$ denotes the subset of $\C$ consisting of $p^{r}$-th roots of unity (see \cite[Proposition 33]{Ser77}).

Let $g\in G$ and $\alpha_{1},\ldots,\alpha_{d}\in \C^{\times}$ be the eigenvalues of $\rho(g)$, where $d:=\dim(\rho)$.
Let $\tau\in\Gal(\Q(\mu_{p^{r}})/\Q)\cong (\Z/p^{r}\Z)^{\times}$ be the element corresponding to $2\in (\Z/p^{r}\Z)^{\times}$, i.e., $\tau(\alpha)=\alpha^{2}$ for any $\alpha\in \mu_{p^{r}}$.
As every eigenvalue $\alpha_{i}$ belongs to $\mu_{p^{r}}$, the class function $\Psi^{2}\Tr(\rho)$ is equal to the character $\Tr({}^{\tau}\!\rho)$ of the $\tau$-twist of $\rho$.
Hence we have $\Psi^{2}\rho={}^{\tau}\!\rho$.
Since the Galois twist does not change the Swan exponent, we get
$\Sw(\Sym^{2}\rho)-\Sw(\wedge^{2}\rho)=\Sw(\Psi^{2}\rho)=\Sw({}^{\tau}\!\rho)=\Sw(\rho)$.
\end{proof}

By considering the last non-trivial lower ramification subgroup, we can also prove the above theorem in the following way:
\begin{proof}[Another proof of Proposition \ref{prop:p-odd}]
As in the above proof, let us assume that $G=G_{1}$ and $\rho$ is a non-trivial irreducible representation of $G$.
Let $G_{m}$ be the last non-trivial lower ramification group of $G$, i.e., $G_{m+1}=\{1\}$.
Then $G_{m}$ is central in $G$ by \cite[Chapter IV, Proposition 10]{Ser79}.
Hence $G_{m}$ acts on $\rho$ via a character $\chi$.
Then $G_{m}$ acts on both $\Sym^{2}\rho$ and $\wedge^{2}\rho$ via $\chi^{2}$.
As $G_{m}$ is a $p$-group, where $p$ is odd, $\chi^{2}$ is non-trivial.
This means that any irreducible constituent of $\Sym^{2}\rho$ and $\wedge^{2}\rho$ has the same slope as $\rho$.
Thus, by noting that $\dim(\Sym^{2}\rho)-\dim(\wedge^{2}\rho)=\dim(\rho)$, we get $\Sw(\Sym^{2}\rho)-\Sw(\wedge^{2}\rho)=\Sw(\rho)$ (see Section \ref{sssec:upper}).
\end{proof}

\begin{cor}\label{cor:p-odd}
Suppose that $p\neq2$.
Let $\rho$ be a $2n$-dimensional irreducible self-dual Galois representation such that $(\Sw(\rho),2n)=1$.
Then we have
\[
\Sw(\Sym^{2}\rho)=n\cdot\Sw(\rho)
\quad\text{and}\quad
\Sw(\wedge^{2}\rho)=(n-1)\cdot\Sw(\rho).
\]
\end{cor}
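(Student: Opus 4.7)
The plan is to determine both Swan exponents simultaneously by solving a $2\times 2$ linear system. Proposition \ref{prop:p-odd} immediately supplies the first equation,
\[
\Sw(\Sym^{2}\rho)-\Sw(\wedge^{2}\rho)=\Sw(\rho).
\]
For a second, independent equation I would combine the hypothesis $p\neq 2$ (which yields the decomposition $\rho\otimes\rho\cong\Sym^{2}\rho\oplus\wedge^{2}\rho$) with the self-duality $\rho\cong\rho^{\vee}$ (which identifies $\rho\otimes\rho$ with $\rho\otimes\rho^{\vee}$) to conclude
\[
\Sw(\Sym^{2}\rho)+\Sw(\wedge^{2}\rho)=\Sw(\rho\otimes\rho^{\vee}).
\]

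Everything then reduces to computing $\Sw(\rho\otimes\rho^{\vee})$, which is precisely the content of the Bushnell--Henniart--Kutzko formula \cite[6.5 Theorem]{BHK98-JAMS}. The role of the coprimality hypothesis $(\Sw(\rho),2n)=1$ is to put us in the simplest possible case: by Section \ref{sssec:upper} the slope $\slope(\rho)=\Sw(\rho)/(2n)$ is then in lowest terms with denominator $2n$, which forces the simple stratum attached to $\rho$ (via the local Langlands correspondence for $\GL_{2n}$) to be automatically minimal and totally ramified with $[F[\beta]:F]=2n$. In that totally ramified minimal regime the correction terms in the BHK formula vanish and one extracts the clean expression
\[
\Sw(\rho\otimes\rho^{\vee})=(2n-1)\,\Sw(\rho).
\]
Solving the two equations then gives $\Sw(\Sym^{2}\rho)=n\,\Sw(\rho)$ and $\Sw(\wedge^{2}\rho)=(n-1)\,\Sw(\rho)$.

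The main obstacle is not the arithmetic of the linear system but the specialization step itself: one has to trace through the definitions in \cite[\S6]{BHK98-JAMS} to confirm that $(\Sw(\rho),2n)=1$ really does put us in the totally ramified, minimal regime where the stratum correction term disappears, leaving exactly $(2n-1)\Sw(\rho)$. Care must be taken with normalizations relating $\Sw(\rho)$, the slope, and the level of the stratum, but no substantive new argument is required beyond BHK.
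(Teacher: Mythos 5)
Your proposal is correct and is essentially the paper's own argument: combine the difference formula of Proposition \ref{prop:p-odd} with the sum $\Sw(\Sym^{2}\rho)+\Sw(\wedge^{2}\rho)=\Sw(\rho\otimes\rho^{\vee})$, evaluate the latter as $(2n-1)\Sw(\rho)$ via the Bushnell--Henniart--Kutzko formula under the coprimality hypothesis, and solve the resulting $2\times2$ system. (Only a cosmetic point: the decomposition $\rho\otimes\rho\cong\Sym^{2}\rho\oplus\wedge^{2}\rho$ holds for complex representations irrespective of $p$; the hypothesis $p\neq2$ is what is needed for Proposition \ref{prop:p-odd}, not for that decomposition.)
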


\begin{proof}
As $\rho$ is self-dual, we have $\rho\otimes\rho^{\vee}\cong\rho\otimes\rho\cong\Sym^{2}\rho\oplus\wedge^{2}\rho$.
By the explicit formula of Bushnell--Henniart--Kutzko (\cite[6.5, Theorem]{BHK98-JAMS}, see also \cite[3.5]{BH19}), the assumption that $(\Sw(\rho),2n)=1$ implies that
\[
\Sw(\rho\otimes\rho^{\vee})
=
(2n-1)\Sw(\rho).
\]
Thus, by combining this equality with Proposition \ref{prop:p-odd}, we get the assertion.
\end{proof}

\begin{rem}\label{rem:AM16}
\begin{enumerate}
\item
It is known that there is no irreducible self-dual Galois representation whose dimension is odd and greater than $1$ if the residue characteristic of $F$ is odd (see, e.g., \cite[84page, Proposition 4]{Pra99}).
Hence the assumption that the dimension of $\rho$ is even in Corollary \ref{cor:p-odd} automatically follows from the irreducibility and self-duality assumptions.
\item
Proposition \ref{prop:p-odd} is not new; it has been already obtained in the work of Anandavardhanan--Mondal (\cite[Theorem 1.2]{AM16}).
Thus the point here is that our proof is different from theirs.
\item
Since the explicit formula of Bushnell--Henniart--Kutzko works for general irreducible $\rho$, it is possible to get an explicit formula of $\Sw(\Sym^{2}\rho)$ and $\Sw(\wedge^{2}\rho)$ for any irreducible $\rho$ as in the manner of the proof of Corollary \ref{cor:p-odd}.
(We put various assumptions on $\rho$ just because then the formula is simplified.)
In fact, this is exactly how Anandavardhanan--Mondal established an explicit formula of $\Sw(\Sym^{2}\rho)$ and $\Sw(\wedge^{2}\rho)$.
\end{enumerate}
\end{rem}

\subsection{Difference of $\Sym^{2}$ and $\wedge^{2}$ Swan exponents: the case where $p=2$}

We next consider the case where the residue characteristic of $F$ is $p=2$.

\begin{thm}\label{thm:p-2-weak-ineq}
Suppose that $p=2$.
For any Galois representation $\rho$ of $G$, we have
\begin{align}\label{ineq:weak}
0
\leq
\Sw(\Sym^{2}\rho)-\Sw(\wedge^{2}\rho)
\leq
2\Sw(\rho).
\end{align}
\end{thm}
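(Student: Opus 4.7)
The plan is to reduce the two-sided bound \eqref{ineq:weak} to a termwise statement along the ramification filtration, and then to establish that statement via the Frobenius--Schur trichotomy. First, I would use the elementary identity $\dim\Sym^{2}V-\dim\wedge^{2}V=\dim V$ to rewrite
\[
\Sw(\Sym^{2}\rho)-\Sw(\wedge^{2}\rho)=\sum_{i\geq 1}\frac{g_{i}}{g_{0}}\bigl(\dim V - D_{i}\bigr),\qquad D_{i}:=\dim(\Sym^{2}V)^{G_{i}}-\dim(\wedge^{2}V)^{G_{i}},
\]
and compare this with $2\Sw(\rho)=2\sum_{i\geq 1}(g_{i}/g_{0})(\dim V - \dim V^{G_{i}})$. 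Both inequalities in \eqref{ineq:weak} then reduce to the termwise claim
\[
2\dim V^{G_{i}}-\dim V \;\le\; D_{i}\;\le\;\dim V \qquad\text{for all } i\geq 1.
\]

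Next, to prove this claim, I would decompose $V|_{G_{i}}=\bigoplus_{\sigma\in\Irr(G_{i})}m_{\sigma}\,\sigma$. The standard decompositions $\Sym^{2}(A\oplus B)\cong\Sym^{2}A\oplus(A\otimes B)\oplus\Sym^{2}B$ and $\wedge^{2}(A\oplus B)\cong\wedge^{2}A\oplus(A\otimes B)\oplus\wedge^{2}B$ make the cross-isotypic contributions $V_{\sigma}\otimes V_{\sigma'}$ cancel in the virtual difference $\Sym^{2}V\ominus\wedge^{2}V$, and combined with the identity $\Tr(\Sym^{2}\rho)(g)-\Tr(\wedge^{2}\rho)(g)=\Tr(\rho)(g^{2})$ noted earlier this yields
\[
D_{i}=\sum_{\sigma\in\Irr(G_{i})}m_{\sigma}\,\nu_{G_{i}}(\sigma),
\]
where $\nu_{G_{i}}(\sigma)\in\{-1,0,+1\}$ is the Frobenius--Schur indicator of $\sigma$. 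The upper bound $D_{i}\leq\dim V$ is then immediate from $\nu_{G_{i}}(\sigma)\leq 1\leq \dim\sigma$. The lower bound rearranges to $\sum_{\sigma}m_{\sigma}\bigl(\nu_{G_{i}}(\sigma)+\dim\sigma\bigr)\geq 2 m_{\mathbf{1}_{G_{i}}} = 2\dim V^{G_{i}}$: the trivial representation contributes exactly $2m_{\mathbf{1}_{G_{i}}}$ (since $\nu+\dim=1+1=2$), and every other $\sigma$ contributes a non-negative amount, using the familiar fact that a quaternionic irreducible satisfies $\dim\sigma\geq 2$ and hence $\nu+\dim\geq 1$.

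No step of this plan strikes me as a serious obstacle: the problem reduces to the completely general statement above about a finite group $H$ acting on a finite-dimensional complex representation $V$, which is then settled by elementary manipulations with Frobenius--Schur indicators. In fact, the argument is uniform in $p$, so it yields \eqref{ineq:weak} without any hypothesis on the residue characteristic; this is consistent with (and sharpened by) Proposition \ref{prop:p-odd} in the case where $p$ is odd.
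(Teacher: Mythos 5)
Your argument is correct and is essentially the paper's own proof: both reduce the two-sided bound termwise along the ramification filtration and then, for each irreducible constituent of $V|_{G_{i}}$, use that the difference of $\Sym^{2}$- and $\wedge^{2}$-invariants is $0$ or $\pm1$, which is exactly your Frobenius--Schur indicator computation in different packaging (and the paper likewise remarks that no special property of the filtration, hence of $p$, is used). The only cosmetic difference is that you sum indicators over all constituents at once, whereas the paper invokes additivity to reduce to a single nontrivial irreducible constituent; the quaternionic $\dim\sigma\geq 2$ fact you cite is not even needed, since $\nu(\sigma)+\dim\sigma\geq 0$ already follows from $\nu(\sigma)\geq -1$ and $\dim\sigma\geq 1$.
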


\begin{proof}
By replacing $G$ with $G_{1}$, we may assume that $G=G_{1}$ (the same argument as in the proof of the case where $p\neq2$).
We first note that the quantity $\Sw(\Sym^{2}\rho)-\Sw(\wedge^{2}\rho)$ is given by applying the operation $\Psi^{2}$ to $\Tr(\rho)$ and then taking the scalar product with $\mcS$ (see Section \ref{sssec:Swan}).
In particular, $\Sw(\Sym^{2}\rho)-\Sw(\wedge^{2}\rho)$ is additive in $\rho$.
Thus it is enough to prove the inequalities when $\rho$ is irreducible. 
Since the assertion is obvious when $\rho$ is trivial, we may also assume that $\rho$ is non-trivial.

We put $d:=\dim(\rho)$.
We note that $\dim(\Sym^{2}\rho)-\dim(\wedge^{2}\rho)=\dim(\rho)$.
By the definition of the Swan exponent, it suffices to show that, for any $i>0$,
\[
0
\leq  d-\dim((\Sym^{2}\rho)^{G_i})+\dim((\wedge^{2}\rho)^{G_i})
\leq 2d-2\dim(\rho^{G_i}),
\]
or equivalently
\begin{align}\label{ineq:p=2-proof}
-d
\leq \dim((\wedge^{2}\rho)^{G_{i}})-\dim((\Sym^{2}\rho)^{G_i})
\leq d-2\dim(\rho^{G_i}).
\end{align}
By noting that this is additive in $\rho$ and depends only on $G_{i}$, we may assume that $G=G_{i}$ and $\rho$ is irreducible non-trivial by the same argument as above.
Then we have $\rho^{G}=0$ and
\[
\dim((\Sym^{2}\rho)^G)+\dim((\wedge^{2}\rho)^G)
=\dim((\rho\otimes\rho)^G)
=\begin{cases}
1 & \text{if $\rho$ is self-dual,}\\
0 & \text{otherwise.}
\end{cases}
\]
Hence only $0,\pm1$ are the possibilities of the value of $\dim((\wedge^{2}\rho)^G)-\dim((\Sym^{2}\rho)^G)$, which gives the inequalities \eqref{ineq:p=2-proof}.
\end{proof}

Let us now discuss the cases where the equalities of \eqref{ineq:weak} are attained.
Note that all terms in \eqref{ineq:weak} are $0$ when $\rho$ is tame, i.e., trivial on $G_{1}$. 

\begin{prop}\label{prop:weak-strict}
Suppose that $p=2$.
Let $\rho$ be a Galois representation of $G$ which is not tame.
Then we have
\[
0
\leq
\Sw(\Sym^{2}\rho)-\Sw(\wedge^{2}\rho)
<
2\Sw(\rho).
\]
The equality $\Sw(\Sym^{2}\rho)-\Sw(\wedge^{2}\rho)=0$ holds if and only if the restriction of $\rho$ on $G_{1}$ decomposes into the sum of quadratic characters.
\end{prop}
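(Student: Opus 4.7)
The plan is to refine the argument used in the proof of Theorem~\ref{thm:p-2-weak-ineq} by tracking precisely the contribution from each ramification level. Set
\[
f_{i} := d - \dim(\Sym^{2}\rho)^{G_{i}} + \dim(\wedge^{2}\rho)^{G_{i}},
\]
so that $\Sw(\Sym^{2}\rho)-\Sw(\wedge^{2}\rho) = \sum_{i\geq1}(g_{i}/g_{0})\, f_{i}$ and the upper bound in \eqref{ineq:weak} amounts to $f_{i}\leq 2\dim(V/V^{G_{i}})$ for every $i\geq 1$. Decomposing $V|_{G_{i}} = V^{G_{i}}\oplus\bigoplus_{j}V_{j}$ into the trivial part and the non-trivial $G_{i}$-irreducible constituents (with multiplicity), and observing that each cross term $V_{j}\otimes V_{k}$ with $j\neq k$ contributes equally to $\dim(\Sym^{2}V)^{G_{i}}$ and $\dim(\wedge^{2}V)^{G_{i}}$, a direct bookkeeping yields
\[
f_{i} \;=\; \sum_{j}\bigl(\dim V_{j}-\epsilon_{j}\bigr),\qquad \epsilon_{j} := \dim(\Sym^{2}V_{j})^{G_{i}}-\dim(\wedge^{2}V_{j})^{G_{i}}.
\]

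For each non-trivial $G_{i}$-irreducible $V_{j}$, the argument in the proof of Theorem~\ref{thm:p-2-weak-ineq} gives $\epsilon_{j}\in\{-1,0,+1\}$ according to whether $V_{j}$ is non-selfdual, symplectic, or orthogonal. The key additional observation is that when $\dim V_{j}=1$ one has $\wedge^{2}V_{j}=0$, so necessarily $\epsilon_{j}\geq 0$; hence $\epsilon_{j}>-\dim V_{j}$ strictly in every case. Consequently $f_{i}<2\dim(V/V^{G_{i}})$ whenever $V^{G_{i}}\subsetneq V$, and since $\rho$ is non-tame this strict inequality holds at $i=1$, yielding $\Sw(\Sym^{2}\rho)-\Sw(\wedge^{2}\rho)<2\Sw(\rho)$ upon summation.

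For the equality $\Sw(\Sym^{2}\rho)=\Sw(\wedge^{2}\rho)$: since $f_{i}\geq 0$ was already established in Theorem~\ref{thm:p-2-weak-ineq}, it holds if and only if $f_{i}=0$ for every $i\geq 1$. Using $\dim V_{j}\geq 1$ and $\epsilon_{j}\leq 1$, the vanishing $\sum_{j}(\dim V_{j}-\epsilon_{j})=0$ forces every non-trivial constituent $V_{j}$ of $\rho|_{G_{i}}$ to be one-dimensional with $\epsilon_{j}=1$, i.e.\ a non-trivial quadratic character. Because $G_{i}\subset G_{1}$ for all $i\geq 1$, the condition that $\rho|_{G_{1}}$ decomposes into characters of order dividing $2$ is automatically inherited by every $\rho|_{G_{i}}$; conversely, if it fails at level $1$ then $f_{1}>0$. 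So the condition for all $i$ is equivalent to the condition for $i=1$, giving the stated characterization. The only non-routine ingredient is the bookkeeping formula for $f_{i}$; after that, the entire proposition hinges on the single fact that $\wedge^{2}$ vanishes on lines, which is what breaks the symmetry between $\Sym^{2}$ and $\wedge^{2}$ at the bottom of the ramification filtration.
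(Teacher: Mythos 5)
Your argument is correct and is essentially the paper's own proof with the additivity/reduction step made explicit: both arguments work level by level in the ramification filtration and rest on the same two facts, namely that $\dim(\Sym^{2}V_{j})^{G_{i}}-\dim(\wedge^{2}V_{j})^{G_{i}}\in\{-1,0,+1\}$ for a non-trivial irreducible constituent $V_{j}$ and that $\wedge^{2}$ vanishes on lines, which is exactly what pins down the equality case as a sum of quadratic characters. One cosmetic slip: your labelling of $\epsilon_{j}$ is permuted (a non-selfdual constituent gives $\epsilon_{j}=0$ and a symplectic one gives $\epsilon_{j}=-1$, not the other way around), but this is harmless since you only use $\epsilon_{j}\in\{-1,0,+1\}$, $\epsilon_{j}\leq 1$, and $\epsilon_{j}\geq 0$ when $\dim V_{j}=1$.
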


\begin{proof}
By the same argument as in the proof of Theorem \ref{thm:p-2-weak-ineq}, it suffices to show the inequalities 
\begin{align}\label{ineq:weak-strict}
-d\leq\dim((\wedge^{2}\rho)^{G})-\dim((\Sym^{2}\rho)^{G}) < d
\end{align}
and discuss the condition so that the left equality holds by assuming $G=G_{1}$ and $\rho$ is a non-trivial irreducible representation of $G$ whose dimension is $d$.
The proof of Theorem \ref{thm:p-2-weak-ineq} shows that the inequalities of \eqref{ineq:weak-strict} always hold and the equality on the left never holds when $d>1$.
When $d=1$, i.e., $\rho$ is a (non-trivial) character, we have $\dim((\wedge^{2}\rho)^G)=0$ and 
\[
\dim((\Sym^{2}\rho)^G)
=
\begin{cases}
0 & \text{if $\rho^{2}$ is non-trivial,}\\
1 & \text{if $\rho^{2}$ is trivial.}
\end{cases}
\]
This completes the proof.
\end{proof}

As an application of the above proposition, we have the following.

\begin{prop}\label{prop:ssc-p-2}
Suppose that $p=2$.
Let $\rho$ be a $(2n+1)$-dimensional irreducible self-dual Galois representation such that $(\Sw(\rho),2n+1)=1$.
Then we have
\[
\Sw(\Sym^{2}\rho)-\Sw(\wedge^{2}\rho)=0.
\]
\end{prop}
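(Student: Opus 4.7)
The plan is to reduce the identity to Proposition~\ref{prop:weak-strict}, which says that for a non-tame representation, $\Sw(\Sym^{2}\rho)=\Sw(\wedge^{2}\rho)$ holds if and only if $\rho|_{G_{1}}$ decomposes as a direct sum of quadratic characters. The tame case is trivial, since then $\Sw(\rho)=\Sw(\Sym^{2}\rho)=\Sw(\wedge^{2}\rho)=0$, so I may assume $\rho$ is wildly ramified, and the task becomes to verify the quadratic-character property for $\rho|_{G_{1}}$.

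The first ingredient is Clifford theory. Since $G_{1}$ is normal in $G$ and $\rho$ is irreducible, there exist a positive integer $m$ and pairwise non-isomorphic, mutually $G$-conjugate irreducible representations $\sigma_{1},\ldots,\sigma_{k}$ of $G_{1}$, all of common dimension $d'$, such that
\[
\rho|_{G_{1}}\cong m\bigoplus_{i=1}^{k}\sigma_{i},\qquad mkd'=\dim(\rho)=2n+1.
\]
The point is that $G_{1}$ is a pro-$2$-group, so every irreducible complex representation of $G_{1}$ has dimension a power of $2$; since $d'$ divides the odd integer $2n+1$, we must have $d'=1$, and each $\sigma_{i}$ is a character $\chi_{i}$ of $G_{1}$.

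The second ingredient combines self-duality with a parity argument. The isomorphism $\rho\cong\rho^{\vee}$ restricts to $\rho|_{G_{1}}\cong(\rho|_{G_{1}})^{\vee}$, so by uniqueness in the Clifford decomposition the set $\{\chi_{1},\ldots,\chi_{k}\}$ is stable under the involution $\chi\mapsto\chi^{-1}$. Since $k$ divides the odd integer $2n+1$, it is itself odd, and an involution on a set of odd cardinality has at least one fixed point; thus some $\chi_{i}$ satisfies $\chi_{i}^{2}=1$. As the $\chi_{j}$'s are all $G$-conjugates of $\chi_{i}$ and squaring commutes with the $G$-action on characters of $G_{1}$, every $\chi_{j}$ is quadratic as well. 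Hence $\rho|_{G_{1}}$ is a direct sum of quadratic characters, and Proposition~\ref{prop:weak-strict} yields $\Sw(\Sym^{2}\rho)-\Sw(\wedge^{2}\rho)=0$.

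The only nontrivial step, and what I would expect to be the main point to get right, is combining the pro-$2$ nature of $G_{1}$ with the oddness of $\dim\rho$ to force $d'=1$; after that, the parity of $k$ together with self-duality finishes the argument. I note in passing that the coprimality hypothesis $(\Sw(\rho),2n+1)=1$ is not actually used in this proof; it enters only in the immediate corollary, where the Bushnell--Henniart--Kutzko formula is invoked to pin down the common value $n\cdot\Sw(\rho)$.
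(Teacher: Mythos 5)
Your proof is correct, but it takes a genuinely different route from the paper's. The paper deduces the quadratic-character property of $\rho|_{G_{1}}$ from the Bushnell--Henniart classification of essentially tame self-dual irreducible representations (\cite[Section 3]{BH11}): since $p=2$ is prime to $\dim(\rho)=2n+1$, one has $\rho\cong\Ind_{L/F}(\chi)$ with $L/F$ tamely ramified of degree $2n+1$ and $\chi$ a quadratic character, so $\rho|_{G_{1}}\cong\bigoplus_{g\in G/H}{}^{g}\chi|_{G_{1}}$ is visibly a sum of quadratic characters; then Proposition \ref{prop:weak-strict} is applied exactly as you do. Your argument replaces this classification input by elementary finite group theory: Clifford theory gives $\rho|_{G_{1}}\cong m\bigoplus_{i}\chi_{i}$ with the constituents of $2$-power dimension dividing the odd number $2n+1$, hence characters; self-duality makes the (odd-cardinality) set of constituents stable under inversion, and the fixed point of that involution, propagated along the single $G$-orbit, makes all $\chi_{i}$ quadratic. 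What your approach buys is elementarity and the explicit observation that the hypothesis $(\Sw(\rho),2n+1)=1$ is irrelevant to the proposition itself (the paper's proof likewise only uses oddness of the dimension; the coprimality only enters Corollary \ref{cor:ssc-p-2} via the Bushnell--Henniart--Kutzko formula). What the paper's approach buys is the stronger structural statement that $\rho$ is monomially induced from a quadratic character along a tame extension, which is reused elsewhere in the paper (e.g.\ in Section \ref{sec:G2}); your argument recovers only the restriction to $G_{1}$, which is all that Proposition \ref{prop:weak-strict} requires.
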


\begin{proof}
Since $p$ is prime to $\dim(\rho)=2n+1$, the classification result of self-dual irreducible Galois representations due to Bushnell--Henniart implies that $\rho$ is of the form $\Ind_{L/F}(\chi)$, where $L/F$ is a tamely ramified extension of degree $2n+1$ and $\chi$ is a quadratic character of $L^{\times}$ (\cite[Section 3]{BH11}).
In other words, there exist a subgroup $H$ of $G$ containing $G_{1}$ and a quadratic character $\chi$ of $H$ such that $\rho\cong\Ind_{H}^{G}(\chi)$.
Thus we have $\rho|_{G_{1}}\cong\bigoplus_{g\in G/H}{}^{g}\chi|_{G_{1}}$. Hence the assumption in the last sentence of Proposition \ref{prop:weak-strict} is satisfied and we get the assertion.
\end{proof}

\begin{rem}
Note that the assumption that $\dim(\rho)$ is odd is not automatic when $p=2$ and indispensable in the above proof so that the result of Bushnell--Henniart \cite{BH11} is available.
\end{rem}

By the same argument as in the case where $p\neq2$ (i.e., using the Bushnell--Henniart--Kutzko formula), we obtain the following:

\begin{cor}\label{cor:ssc-p-2}
With the same assumptions as in Proposition \ref{prop:ssc-p-2}, we have
\[
\Sw(\Sym^{2}\rho)
=\Sw(\wedge^{2}\rho)
=n\cdot\Sw(\rho).
\]
\end{cor}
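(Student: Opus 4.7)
The plan is to mimic the derivation of Corollary \ref{cor:p-odd} verbatim, substituting the new ``vanishing-of-the-difference'' input from Proposition \ref{prop:ssc-p-2} in place of the identity $\Sw(\Sym^{2}\rho)-\Sw(\wedge^{2}\rho)=\Sw(\rho)$ used in the odd-residue-characteristic case. Concretely, I will work from the sum $\Sw(\Sym^{2}\rho)+\Sw(\wedge^{2}\rho)$ and the difference $\Sw(\Sym^{2}\rho)-\Sw(\wedge^{2}\rho)$ and solve the resulting linear system.

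First I would use self-duality of $\rho$ to identify
\[
\rho\otimes\rho^{\vee}\cong\rho\otimes\rho\cong\Sym^{2}\rho\oplus\wedge^{2}\rho,
\]
so that $\Sw(\Sym^{2}\rho)+\Sw(\wedge^{2}\rho)=\Sw(\rho\otimes\rho^{\vee})$. Next I would appeal to the Bushnell--Henniart--Kutzko formula \cite[6.5, Theorem]{BHK98-JAMS} (as recorded in \cite[3.5]{BH19}); the coprimality hypothesis $(\Sw(\rho),2n+1)=1$, together with $\dim(\rho)=2n+1$, places $\rho$ in the ``generic'' case of that formula and yields
\[
\Sw(\rho\otimes\rho^{\vee})=(2n+1-1)\cdot\Sw(\rho)=2n\cdot\Sw(\rho).
\]
Combining this with the equality $\Sw(\Sym^{2}\rho)=\Sw(\wedge^{2}\rho)$ furnished by Proposition \ref{prop:ssc-p-2} immediately gives $\Sw(\Sym^{2}\rho)=\Sw(\wedge^{2}\rho)=n\cdot\Sw(\rho)$.

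There is essentially no obstacle here: once Proposition \ref{prop:ssc-p-2} is in hand, the argument is pure bookkeeping, and the only thing to check is that the hypotheses of the BHK formula are met in the odd-dimensional, self-dual, residue-characteristic-$2$ setting. Since the BHK formula is valid for an arbitrary irreducible Galois representation (with no restriction on $p$) and the coprimality assumption is exactly the condition that isolates the simplest term on the right-hand side of that formula, the proof reduces to a one-line computation.
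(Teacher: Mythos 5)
Your proposal is correct and is essentially the proof the paper intends: the paper derives the corollary ``by the same argument as in the case where $p\neq2$,'' i.e.\ by writing $\Sw(\Sym^{2}\rho)+\Sw(\wedge^{2}\rho)=\Sw(\rho\otimes\rho^{\vee})=2n\cdot\Sw(\rho)$ via the Bushnell--Henniart--Kutzko formula under the coprimality hypothesis, and combining this with the vanishing of the difference from Proposition \ref{prop:ssc-p-2}. Nothing further is needed.
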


\section{A conjecture and some evidence}
Let us keep the notation as in the previous section.

\begin{conj}\label{conj:Henniart}
Assume $p=2$.
For any Galois representation $\rho$, we have
\begin{align}\label{ineq:Henniart}
\Sw(\Sym^{2}\rho)-\Sw(\wedge^{2}\rho)
\leq
\Sw(\rho).
\end{align}
\end{conj}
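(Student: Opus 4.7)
My plan is to reduce Conjecture \ref{conj:Henniart} to a nonnegativity statement on a weighted sum of Frobenius--Schur indicators along the ramification filtration, and then analyze that sum using the structure of $2$-groups. First, by Section \ref{sssec:tame}, both sides of \eqref{ineq:Henniart} scale by $e_{F'/F}$ under the replacement of $F$ by its maximal tame subextension in $E$, so I may assume from the start that $G = G_{0} = G_{1}$ is a finite $2$-group. Next, the virtual representation $\Psi^{2}\rho = \Sym^{2}\rho - \wedge^{2}\rho$ is additive in $\rho$ (the cross term $\rho_{1}\otimes\rho_{2}$ appears in both $\Sym^{2}(\rho_{1}\oplus\rho_{2})$ and $\wedge^{2}(\rho_{1}\oplus\rho_{2})$, so it cancels in the difference), and so is $\Sw(\rho)$; hence the discrepancy $\Sw(\Sym^{2}\rho) - \Sw(\wedge^{2}\rho) - \Sw(\rho)$ is additive in $\rho$, and I may further assume $\rho$ is irreducible and nontrivial.

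For each $i \geq 1$, Clifford theory gives a decomposition $\rho|_{G_{i}} = e_{i}\bigoplus_{j=1}^{r_{i}}\sigma_{j}^{(i)}$ as a single $G$-orbit of irreducibles of $G_{i}$ with common dimension $d_{i}$ and common Frobenius--Schur indicator $\epsilon_{i}\in\{-1,0,1\}$ (FS indicators are $G$-invariant on a normal subgroup), so that $d = e_{i}r_{i}d_{i}$. Combining the character identity $\chi_{\Sym^{2}\rho}(g) - \chi_{\wedge^{2}\rho}(g) = \chi_{\rho}(g^{2})$ with orthogonality on $G_{i}$ yields
\[
\dim\bigl((\Sym^{2}\rho)^{G_{i}}\bigr) - \dim\bigl((\wedge^{2}\rho)^{G_{i}}\bigr) = e_{i}r_{i}\epsilon_{i} = \frac{d\,\epsilon_{i}}{d_{i}}.
\]
Letting $m$ be the largest $i$ with $G_{i}$ acting nontrivially on $\rho$, the contributions for $i>m$ vanish (there $\sigma^{(i)}$ is trivial, $d_{i}=1$, $\epsilon_{i}=1$), and a direct unwinding of the definition of $\Sw$ shows that \eqref{ineq:Henniart} is equivalent to
\[
S(\rho) := \sum_{i=1}^{m} \frac{g_{i}}{g_{0}}\cdot\frac{\epsilon_{i}}{d_{i}} \geq 0.
\]

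The endpoints of $S(\rho)$ are well-behaved. At $i=m$ the group $G_{m}$ is central in $G$ by \cite[Chapter IV, Proposition 10]{Ser79}, so $\sigma^{(m)}$ is a character of a $2$-group, whose FS indicator equals $1$ if it is quadratic and $0$ otherwise, contributing a nonnegative term. At $i=1$ the term is $\epsilon(\rho)/d$, positive when $\rho$ is orthogonal, zero when $\rho$ is not self-dual, and negative only when $\rho$ is symplectic. To control the intermediate levels $1<i<m$, where $\sigma^{(i)}$ can be a higher-dimensional symplectic irreducible of $G_{i}$ with $\epsilon_{i}=-1$, I would combine two strategies hinted at in the paper: Clifford-theoretic induction on $(|G|,\dim\rho)$, writing $\rho = \Ind_{H}^{G}\sigma$ for a normal subgroup $H$ strictly between $G_{1}$ and $G$ and invoking Lemma \ref{lem:Ind-Swan} together with the inductive hypothesis on $\sigma$; and a study of the chain $\rho=\sigma^{(1)}\supset\sigma^{(2)}\supset\cdots\supset\sigma^{(m)}$, each $\sigma^{(i+1)}$ chosen as an irreducible constituent of $\sigma^{(i)}|_{G_{i+1}}$, which controls how $\epsilon_{i}$ and $d_{i}$ can evolve along the filtration.

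The main obstacle will be showing that negative symplectic contributions at intermediate levels are dominated by positive contributions at deeper levels. Since $g_{i}/g_{0}$ decreases along the filtration while $1/d_{i}$ can grow, no bound uniform in the shape of the ramification data will suffice; any proof will presumably need to exploit arithmetic constraints on the ramification breaks of $2$-extensions, such as the Hasse--Arf theorem on the jumps of the upper ramification filtration together with the restricted way in which symplectic irreducibles of a $2$-group can arise in restrictions to normal subgroups. In the orthogonal case the positive contribution $\epsilon(\rho)/d = 1/d$ at $i=1$ should moreover yield strict inequality in \eqref{ineq:Henniart}, matching the expectation stated in the introduction.
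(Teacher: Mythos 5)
Your reductions are correct, and they essentially retrace steps the paper itself makes: the additivity of $\Sw(\Sym^{2}\rho)-\Sw(\wedge^{2}\rho)-\Sw(\rho)$ and the passage to $G=G_{1}$ with $\rho$ irreducible are exactly how the paper gets from Conjecture \ref{conj:Henniart} to Conjecture \ref{conj:Henniart2} and proves Theorem \ref{thm:p-2-weak-ineq}; and your Clifford-theoretic identity $\dim((\Sym^{2}V)^{G_{i}})-\dim((\wedge^{2}V)^{G_{i}})=\tfrac{1}{|G_{i}|}\sum_{g\in G_{i}}\Tr\rho(g^{2})=d\,\epsilon_{i}/d_{i}$ is valid (the Frobenius--Schur indicator is constant on a $G$-orbit of irreducibles of the normal subgroup $G_{i}$), so the conjecture for irreducible faithful $\rho$ is indeed equivalent to $S(\rho)=\sum_{i=1}^{m}(g_{i}/g_{0})\,\epsilon_{i}/d_{i}\geq 0$. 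This is a clean reformulation, close in spirit to the level-by-level bookkeeping in the proof of Proposition \ref{prop:orth-strict-inequality}.

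The genuine gap is that nothing after the reformulation is proved. The only levels you control are $i=m$ (a character of a central $2$-subgroup, indicator $0$ or $1$) and $i=1$ (the indicator of $\rho$ itself); the entire content of the statement is precisely the claim that symplectic constituents with $\epsilon_{i}=-1$ at intermediate levels cannot make $S(\rho)$ negative, and for that you offer only candidate tools (induction via $\rho=\Ind_{H}^{G}\sigma$ with Lemma \ref{lem:Ind-Swan}, Hasse--Arf, constraints on ramification breaks) without carrying any of them out. Note that the statement you are addressing is stated in the paper as a conjecture and is not proved there: the paper only verifies it in special cases (Sections \ref{subsec:2-dim}--\ref{subsec:approach-ramif-group}), explicitly identifies the symplectic case as the obstruction to the induction on $|G|$ (end of Section \ref{subsec:induction-order}), and remarks after Conjecture \ref{conj:Henniart2} that no argument using only the fact that $\{G_{i}\}$ is a decreasing chain of normal subgroups can suffice --- which is all that your per-level analysis uses so far. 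As it stands, your proposal is a correct equivalent restatement together with an accurate description of the known difficulty, not a proof of \eqref{ineq:Henniart}.
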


We remark that, by the same argument as in the proof of Theorem \ref{thm:p-2-weak-ineq}, the above conjecture is equivalent to the following:

\begin{conj}\label{conj:Henniart2}
Assume $p=2$ and $G=G_{1}$.
For any irreducible faithful Galois representation $\rho$, we have
\[
\Sw(\Sym^{2}\rho)-\Sw(\wedge^{2}\rho)
\leq
\Sw(\rho).
\]
\end{conj}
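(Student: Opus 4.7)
The plan is to view Conjecture \ref{conj:Henniart2} as the ``reduced form'' of Conjecture \ref{conj:Henniart} and to attack it by combining the reductions used for Theorem \ref{thm:p-2-weak-ineq} with a finer pointwise analysis. First, I would record why the reduction to $G=G_{1}$, $\rho$ irreducible, and $\rho$ faithful is permissible. (a)~By Section \ref{sssec:tame} the three quantities $\Sw(\rho)$, $\Sw(\Sym^{2}\rho)$, $\Sw(\wedge^{2}\rho)$ are all multiplied by the same factor $e_{F'/F}$ upon restriction from $G$ to $G_{1}$, so one may assume $G=G_{1}$. (b)~Since $\Sym^{2}(\rho_{1}\oplus\rho_{2})-\wedge^{2}(\rho_{1}\oplus\rho_{2})=(\Sym^{2}\rho_{1}-\wedge^{2}\rho_{1})+(\Sym^{2}\rho_{2}-\wedge^{2}\rho_{2})$, the functional $\rho\mapsto\Sw(\Sym^{2}\rho)-\Sw(\wedge^{2}\rho)-\Sw(\rho)$ is additive, so irreducibility may be assumed. (c)~Under $G=G_{1}$ the group $G$ is a $2$-group, so any normal subgroup $H$ is itself a $2$-group and the quotient $G/H=\Gal(E^{H}/F)$ is again equal to its own first ramification subgroup; via the slope interpretation of Section \ref{sssec:upper}, each of the three Swan exponents of a representation trivial on $H$ is the same whether computed on $G$ or $G/H$. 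Taking $H=\ker\rho$, we may assume $\rho$ is faithful.

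Next, I would rewrite the desired inequality in a form amenable to local analysis. Since $\dim\Sym^{2}V-\dim\wedge^{2}V=\dim V$, the inequality \eqref{ineq:Henniart} is equivalent to
\[
\sum_{i\geq 1}\frac{g_{i}}{g_{1}}\bigl(\dim V^{G_{i}}+\dim(\wedge^{2}V)^{G_{i}}-\dim(\Sym^{2}V)^{G_{i}}\bigr)\leq 0.
\]
A natural sufficient condition is the pointwise inequality $\dim(\Sym^{2}V)^{G_{i}}-\dim(\wedge^{2}V)^{G_{i}}\geq\dim V^{G_{i}}$ for each $i\geq 1$, which by averaging and the identity $\Tr(\Sym^{2}\rho)(g)-\Tr(\wedge^{2}\rho)(g)=\chi(g^{2})$ (with $\chi=\Tr\rho$) becomes
\[
\sum_{g\in G_{i}}\bigl(\chi(g^{2})-\chi(g)\bigr)\geq 0\qquad\text{for every }i\geq 1.
\]
The key structural input when $p=2$ is that squaring strictly deepens the filtration: $G_{i}/G_{i+1}$ is elementary abelian of exponent $2$ for $i\geq 1$, so $g\in G_{i}\Rightarrow g^{2}\in G_{i+1}$. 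Heuristically $\chi(g^{2})$ should therefore lie closer to $\chi(1)=\dim V$ than $\chi(g)$, which is precisely what the inequality asserts.

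To promote this heuristic to a proof, I would combine induction on $|G|$ with monomial induction. Since $G$ is supersolvable, any faithful irreducible $\rho$ is of the form $\Ind_{H}^{G}\chi$ for some proper subgroup $H\subsetneq G$ and some character $\chi$ of $H$. Mackey-style formulas decompose $\Sym^{2}\Ind_{H}^{G}\chi$ and $\wedge^{2}\Ind_{H}^{G}\chi$ into a ``diagonal'' piece involving $\chi^{2}$ and cross-terms $\Ind_{H\cap H^{g}}^{G}(\chi\cdot{}^{g}\chi)$ indexed by non-trivial double cosets of $H$ in $G$. The cyclic sub-case is already treated in Section \ref{subsec:cyclic}; the plan would be to apply the inductive hypothesis on each $H\cap H^{g}$ and to bound the diagonal term via Lemma \ref{lem:Ind-Swan} combined with the Bushnell--Henniart--Kutzko tensor formula that underlies Corollaries \ref{cor:p-odd} and \ref{cor:ssc-p-2}.

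The hardest part will be that, in stark contrast to the odd-$p$ case, no Galois-twist identification $\Psi^{2}\rho={}^{\tau}\!\rho$ is available: the integer $2$ is not a unit in $(\Z/2^{r}\Z)^{\times}$, so $\Psi^{2}$ is not a formal symmetry of Swan exponents, and the clean equality of Proposition \ref{prop:p-odd} has no direct analogue. Consequently the Mackey cross-terms genuinely mix characters of different slopes, and the extremal cases---typically those where $\chi^{2}$ remains wildly ramified of large depth, so that \eqref{ineq:Henniart} is nearly tight---demand a hands-on arithmetic analysis of the associated simple-stratum data rather than a purely formal manipulation. Reconciling these tight cases with the inductive hypothesis is where I expect the argument to require the most work, and is presumably why the conjecture resists a uniform treatment and is only verified in the special situations enumerated in the introduction.
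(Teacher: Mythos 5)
Your reductions (a)--(c) are fine and simply reproduce the paper's stated equivalence of Conjecture \ref{conj:Henniart} with Conjecture \ref{conj:Henniart2}, but beyond that the proposal is a programme rather than a proof --- and, importantly, the statement you are addressing is a \emph{conjecture} in the paper: it is not proved there either, only verified in the special cases of Sections \ref{subsec:2-dim}--\ref{subsec:induction-dimension}. The concrete flaw in your plan is that your ``natural sufficient condition'' is false in general. Averaging over $G_i$, the condition $\sum_{g\in G_i}(\chi(g^2)-\chi(g))\geq 0$ says that the sum of the Frobenius--Schur indicators of the nontrivial irreducible constituents of $\rho|_{G_i}$ is nonnegative, and this fails as soon as symplectic constituents dominate: already for the $2$-dimensional irreducible of a quaternion subquotient $Q_8$ one gets $\sum_{g}(\chi(g^2)-\chi(g))=-8<0$. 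This is precisely the phenomenon the paper isolates: in case (1)(b) of the analysis around Proposition \ref{prop:I-equal-Z-and-H-irr}, when $\rho|_{H}$ is symplectic the layer $H=G^{\alpha'}$ contributes $(n+1)\alpha'$, which \emph{exceeds} $n\alpha'$, and the desired inequality is only rescued by the genuinely ramification-theoretic bound $n\alpha\geq(n+1)\alpha'$, proved via Buhler's estimate and Herbrand functions. The Remark following Conjecture \ref{conj:Henniart2} states this obstruction explicitly: the proof of Theorem \ref{thm:p-2-weak-ineq} uses only that the $G_i$ form a descending chain of subgroups, whereas the stronger inequality, if true, must exploit special properties of the ramification filtration; so no layer-by-layer argument of the kind you propose (nor your heuristic that $\chi(g^2)$ is ``closer to $\dim V$'' than $\chi(g)$, which is wrong when $g^2$ is a central involution acting by $-1$) can succeed.

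Your fallback --- induction on $|G|$ or on $\dim\rho$ via monomial induction and Mackey-type decompositions --- is essentially what the paper already carries out in Sections \ref{subsec:induction-order} and \ref{subsec:induction-dimension}, where the obstruction is identified precisely: writing $\rho=\Ind_{H}^{G}(\sigma)$ with $[G:H]=2$, the inductive step closes when $\Sw(\tau\otimes\omega)=\Sw(\tau)$, but breaks down when $\sigma$ is not self-dual (the trivial character occurs in $\tau$, so that equality never holds) and, in the order-of-$G$ induction, in the symplectic case. You do not supply any new input to handle these cases (the Bushnell--Henniart--Kutzko formula you invoke expresses $\Sw(\rho\otimes\rho^{\vee})$, not the $\Sym^2/\wedge^2$ split, so it cannot by itself settle the sign of $\Sw(\Sym^{2}\rho)-\Sw(\wedge^{2}\rho)-\Sw(\rho)$). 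So the proposal reproduces the paper's partial reductions but leaves exactly the same gap open that makes the statement a conjecture.
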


In the following, let us suppose that $p=2$ and $G=G_{1}$.
In particular, $\dim(\rho)$ is a power of $2$.
Conjecture \ref{conj:Henniart2} is obvious when $\dim(\rho)=1$, so let us investigate the case where $\dim(\rho)>1$ in the following.
Note that then $G$ is necessarily non-abelian since any irreducible representation of an abelian group is $1$-dimensional.

\begin{rem}
Note that the proof of Theorem \ref{thm:p-2-weak-ineq} used only a descending filtration by subgroups, but no particular property of that filtration. 
The stronger conjecture (Conjecture \ref{conj:Henniart}), if true, necessarily uses strong properties of the filtration by ramification subgroups, as can be seen in the examples in this section.
\end{rem}

\subsection{The $2$-dimensional case}\label{subsec:2-dim}
We first investigate the case where $\dim(\rho)=2$.
Since any irreducible representation of a $p$-group is monomial, i.e., induced from a $1$-dimensional representation of a subgroup (\cite[Theorems 14 and 16]{Ser77}), we may write $\rho\cong\Ind_{G'}^{G}(\chi)$, where $G'=\Gal(E/F')$ for a quadratic separable (ramified) extension $F'$ of $F$ and $\chi$ is a character of $G'$.
We also see $\chi$ as a character of $F^{\prime\times}$ by class field theory.
Let $\omega:=\omega_{F'/F}$ be the quadratic character of $F^{\times}$ defining $F'/F$.
Let $\chi^{\circ}:=\chi|_{F^{\times}}$.
We put $a:=\Sw(\chi)$, $b:=\Sw(\chi^{2})$, $c:=\Sw(\chi^{\circ})$, $d:=\Sw(\omega\chi^{\circ})$, and $s:=\Sw(\omega)$.

\begin{lem}\label{lem:relations}
We have the following relations:
\begin{enumerate}
\item $d\leq\max\{c,s\}$, where equality holds when $c\neq s$,
\item $a\geq2c$,
\item $0<s\leq2e$, where $e:=\val_F(2)$,
\item $a\geq s$.
\end{enumerate}
\end{lem}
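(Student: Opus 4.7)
I would treat the four inequalities in order. Parts (1)--(3) will follow from standard local-field facts, while (4) is the substantive step and will use in an essential way that $\rho=\Ind_{F'/F}\chi$ is irreducible in the standing setup.

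For (1), I would apply Lemma \ref{lem:Swan-twist} to the $1$-dimensional characters $\chi^\circ$ and $\omega$ of $F^\times$: since Swan and slope coincide for characters, this yields both the inequality $d\leq\max\{c,s\}$ and the equality in case $c\neq s$. For (2), the point is that $F'/F$ is totally ramified of degree $2$, so $v_{F'}|_{F^\times}=2v_F$ and $U_F^n\subset U_{F'}^{2n}$ for every $n\geq 0$. The case $c=0$ is trivial; otherwise any $u\in U_F^c$ on which $\chi^\circ$ is non-trivial lies in $U_{F'}^{2c}$ and witnesses the non-triviality of $\chi$ there, so $a\geq 2c$.

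For (3), I would argue that $s\geq 1$ because any ramified character of $F^\times$ of $p$-power order must be wildly ramified: a tame character factors through $k_F^\times$, whose order is prime to $p$. Since $F'/F$ is ramified, $\omega$ is ramified, hence wild. For the upper bound, I would apply Hensel's lemma to $f(Y)=Y^2+2Y-x$ when $v_F(x)\geq 2e+1$: as $v_F(f(0))\geq 2e+1>2e=2v_F(f'(0))$, there is a root $y\in\mathfrak{p}_F^{e+1}$ with $(1+y)^2=1+x$. This gives $U_F^{2e+1}\subset(F^\times)^2\subset\ker\omega$, so $s\leq 2e$.

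The interesting part is (4). I would set $\rho:=\Ind_{F'/F}\chi$, which is irreducible of dimension $2$ in the standing setup (so $\chi^{\sigma}\neq\chi$, where $\sigma$ denotes the non-trivial element of $\Gal(F'/F)$). By Lemma \ref{lem:Ind-Swan}, $\Sw(\rho)=a+s$, and by Section \ref{sssec:upper} the slope of $\rho$ equals $(a+s)/2$. The standard determinant formula for a representation induced from an index-$2$ subgroup, combined with the fact that the Verlagerung $F^\times\to F'^{\times}$ is the inclusion, gives $\det\rho=\omega\cdot\chi^\circ$, whose Swan exponent is $d$. Since $\det\rho=\wedge^2\rho$ is a quotient of $\rho$, its slope satisfies $d\leq(a+s)/2$. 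To finish, I would split into cases: if $c\geq s$, then (2) already gives $a\geq 2c\geq 2s\geq s$; if $c<s$, the equality case of (1) forces $d=s$, and the inequality $s\leq(a+s)/2$ rearranges to $a\geq s$. The main delicate point will be identifying $\det\rho=\omega\cdot\chi^\circ$ cleanly in this wildly ramified setting, but this is classical (and can also be verified by restriction to $W_{F'}$ via Mackey); once it is in hand, the rest is bookkeeping with the filtration.
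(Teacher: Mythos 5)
Your proposal is correct, and parts (1)--(3) run essentially along the paper's lines: (1) is the twisting bound for characters (the paper just calls it obvious), (2) is the same totally-ramified-degree-$2$ observation $U_F^c\subset U_{F'}^{2c}$, and for the upper bound in (3) you replace the paper's appeal to Serre's bound $v_F(\mfd_{F'/F})\leq 1+2e$ (together with $v_F(\mfd_{F'/F})=s+1$) by a direct Hensel argument showing $U_F^{2e+1}\subset (F^\times)^2$; both are standard and equivalent in content. Where you genuinely diverge is (4). The paper argues directly with ramification of the quadratic extension: since the break of $\Gal(F'/F)$ is $s$, if $a<s$ then the non-trivial element $\sigma$ acts trivially on $F'^{\times}/U_{F'}^{a+1}$, forcing $\chi^{\sigma}=\chi$ and contradicting irreducibility of $\Ind_{F'/F}\chi$. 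You instead use the determinant: $\det\rho=\omega\chi^{\circ}$ (as in Proposition \ref{prop:Ind-Sym-Ext}, since $\wedge^{2}\rho=\det\rho$ for $\dim\rho=2$), so $d=\Sw(\det\rho)=\slope(\det\rho)\leq\slope(\rho)=(a+s)/2$ by Lemma \ref{lem:Ind-Swan} and Section \ref{sssec:upper}, and then (1) and (2) close the case split $c\geq s$ versus $c<s$. The paper's route is more self-contained and makes transparent exactly how irreducibility enters (via $\chi^{\sigma}\neq\chi$), and it also shows the ramification break of $F'/F$ is at most $a$; your route is a purely formal bookkeeping argument that reuses machinery already set up (induction formula, slope formalism, determinant of an induced representation via the transfer), with irreducibility entering only through $\Sw(\rho)=2\slope(\rho)$. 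One wording fix: $\det\rho=\wedge^{2}\rho$ is not a quotient of $\rho$ (it is a summand of $\rho\otimes\rho$); the bound $\slope(\det\rho)\leq\slope(\rho)$ you need is nevertheless immediate, either because $\det\rho$ is trivial on every subgroup on which $\rho$ is trivial, or by Lemma \ref{lem:Swan-twist} applied to $\rho\otimes\rho$.
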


\begin{proof}
The inequality (1) is obvious.

Since $F'/F$ is a ramified quadratic extension, we get the inequality (2).

As $F'/F$ is wildly ramified, we have $s>0$.
On the other hand, as used in the proof of Lemma \ref{lem:Ind-Swan}, we have $v_{F}(\mfd_{F'/F})=s+1$.
By the upper bound formula for $v_{F}(\mfd_{F'/F})$ (see \cite[Chapter III, Section 6, Remarks (1)]{Ser79}), we get $v_{F}(\mfd_{F'/F})\leq 1+2e$.
Thus we obtain $s\leq2e$, hence the inequalities (3).

Since $\rho\cong\Ind_{G'}^{G}(\chi)$ is irreducible, we necessarily have $\chi^{\sigma}\neq\chi$, where $\sigma$ is the non-trivial element of $\Gal(F'/F)$.
If $a<s$, then $\sigma$ acts trivially on $F^{\prime\times}/1+\mfp_{F'}^{a+1}$, which implies that $\chi^{\sigma}\cdot\chi^{-1}$ is trivial, hence a contradiction.
Thus we get the inequality (4).
\end{proof}

\begin{lem}\label{lem:squaring}
We have $b\leq\max\{a/2,a-2e\} \,(<a)$.
More precisely, we have
\begin{itemize}
\item
$b=a-2e$ when $a>4e$,
\item
$b=a/2$ when $a<4e$ and $a$ is even,
\item
$b\leq a/2$ otherwise.
\end{itemize}
\end{lem}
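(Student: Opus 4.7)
The plan is to exploit the identity $\chi^2(u)=\chi(u^2)$ on $F^{\prime\times}$, which reduces computing $b=\Sw(\chi^2)$ to understanding the squaring map $u\mapsto u^2$ on the unit filtration $\{U_{F'}^n\}$: by the definition of the Swan exponent, $b$ is the smallest integer such that the image of $U_{F'}^{b+1}$ under squaring lies in $\ker\chi$, and since $\Sw(\chi)=a$ this kernel contains $U_{F'}^{a+1}$ but meets $U_{F'}^{a}$ non-trivially.

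The core input is the explicit description of squaring on units, obtained from $(1+x)^2=1+2x+x^2$ together with $v_{F'}(2)=2e$ (this uses that $F'/F$ is ramified quadratic, so the ramification index equals $2$). One obtains two regimes: for $n<2e$, squaring defines a homomorphism $U_{F'}^n\to U_{F'}^{2n}$ whose induced map on graded quotients $k_{F'}\cong U_{F'}^n/U_{F'}^{n+1}\to U_{F'}^{2n}/U_{F'}^{2n+1}\cong k_{F'}$ is the Frobenius $\bar x\mapsto\bar x^2$, hence bijective; for $n>2e$, squaring defines a bijection $U_{F'}^n\to U_{F'}^{n+2e}$ (by a Hensel-type argument, since on graded pieces it is multiplication by the unit $\overline{2/\pi_{F'}^{2e}}$).

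The three cases then follow by direct check. If $a>4e$, then $a-2e>2e$, so squaring identifies $U_{F'}^{a-2e}$ with $U_{F'}^a$ (on which $\chi$ is non-trivial) and sends $U_{F'}^{a-2e+1}$ into $U_{F'}^{a+1}$ (on which $\chi$ is trivial), giving $b=a-2e$. If $a<4e$ with $a$ even, then $a/2<2e$, so squaring sends $U_{F'}^{a/2}$ onto a subset that surjects to $U_{F'}^a/U_{F'}^{a+1}$ via the Frobenius (so $\chi^2|_{U_{F'}^{a/2}}$ factors through, and sees, the non-trivial character $\chi|_{U_{F'}^a/U_{F'}^{a+1}}$), while it sends $U_{F'}^{a/2+1}$ into $U_{F'}^{a+2}\subseteq U_{F'}^{a+1}$; thus $b=a/2$. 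In the remaining cases ($a=4e$, or $a$ odd with $a<4e$), one checks directly using the appropriate regime that squaring $U_{F'}^{\lceil (a+1)/2\rceil}$ already lands in $U_{F'}^{a+1}$, which gives $b\leq\lfloor a/2\rfloor\leq a/2$. The overall bound $b\leq\max(a/2,a-2e)<a$ follows immediately, using $e>0$.

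The main obstacle is the foundational computation of squaring on the filtration, in particular identifying the induced map on graded pieces in the $n<2e$ regime as the Frobenius, since the equality $b=a/2$ in the second case depends crucially on its surjectivity. A secondary subtlety is the threshold $n=2e$: there squaring may land strictly deeper than $U_{F'}^{4e}$ because of an Artin--Schreier-type cancellation, so its image is not fully described by the two clean regimes above. Fortunately the lemma only asserts an inequality $b\leq a/2$ at the boundary $a=4e$, so one sidesteps this delicate point by choosing the filtration index just above the threshold.
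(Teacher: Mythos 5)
Your proposal is correct and follows essentially the same route as the paper: both arguments analyze the squaring map on the unit filtration via $(1+x)^2=1+2x+x^2$ and the comparison $\min\{2i,i+2e\}$, using in the low regime that squaring on the residue field of characteristic $2$ is bijective, and then test at the indices $a-2e$, $a/2$ (and one step deeper) exactly as in the paper's case division. Your extra packaging (bijectivity of $U_{F'}^{n}\to U_{F'}^{n+2e}$ for $n>2e$, graded maps as Frobenius) is a harmless strengthening of the same computation, and your handling of the boundary cases $a=4e$ and $a$ odd matches the paper's one-sided bound $b\leq a/2$.
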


\begin{proof}
The first assertion simply follows from that $(1+\mfp_{F'}^{b})^{2}\subset 1+\mfp_{F'}^{\min\{b+2e,2b\}}$.

To show the latter assertion, let us examine the behavior of the squaring map on $\mcO_{F'}^{\times}$.
We have $(1+x)^{2}=1+2x+x^{2}$ for any $x\in F'$.
If $v_{F'}(x)=i\geq1$, then $v_{F'}(x^{2})=2i$ and $v_{F'}(2x)=i+2e$ (recall that $e=v_{F}(2)$).
If follows that $v_{F'}(2x+x^{2})\geq\min\{2i,i+2e\}$ with equality when $2i\neq i+2e$.
Recall that, by definition, $a\in\Z_{>0}$ is the integer such that $\chi$ is trivial on $1+\mfp_{F'}^{a+1}$, but not on $1+\mfp_{F'}^{a}$.
(Note that we have $a>0$ by Lemma \ref{lem:relations} (3) and (4).)

Let us first assume $a>4e$ and take $i=a-2e$.
Then we have $2i>i+2e$, hence $\min\{2i,i+2e\}=a$.
Moreover, we have $(1+x)^{2}=1+2x \pmod{\mfp_{F'}^{a+1}}$.
Consequently, there exists an element $x\in F'$ with valuation $i$ such that $\chi(1+x)^{2}\neq1$, thus $b\,(:=\Sw(\chi^{2}))$ is at least $a-2e$. 
On the other hand, if we take $i=a-2e+1$, then we have $\min\{2i,i+2e\}\geq a+1$.
Hence we have $\chi(1+x)^{2}=1$ for any element $x\in F'$ with valuation $i$, which implies that $b=a-2e$.

Let us then assume $a<4e$ and $a$ is even. 
We take $i=a/2$. 
Then we have $2i<i+2e$ and $\min\{2i,i+2e\}=2i=a$.
Hence we see that $(1+x)^{2}=1+x^{2} \pmod{\mfp_{F'}^{a+1}}$.
By noting that the squaring map is bijective on $k_{F'}^{\times}$, we can choose $x\in F'$ with valuation $i$ satisfying $\chi(1+x)^{2}\neq1$.
Similarly with $i=a/2+1$, we see that $\chi(1+x)^{2}=1$ for any element $x\in F'$ with valuation $i$.
Thus we get $b=a/2$.

When $a<4e$ and $a$ is odd or $a=4e$, we can only assert that $b\leq a/2$.
\end{proof}

By Lemma \ref{lem:Ind-Swan}, we have $\Sw(\rho)=a+s$.
Note that we have
\[
\wedge^{2}(\Ind_{F'/F}(\chi))\cong\omega\chi^{\circ}
\quad
\text{and}
\quad
\Sym^{2}(\Ind_{F'/F}(\chi))\cong\Ind_{F'/F}(\chi^{2})\oplus\chi^{\circ}
\]
(see Proposition \ref{prop:Ind-Sym-Ext}).
Hence we get
\begin{align*}
\Sw(\Sym^{2}\rho)-\Sw(\wedge^{2}\rho)
&=\Sw(\Ind_{F'/F}(\chi^{2}))+\Sw(\chi^{\circ})-\Sw(\omega\chi^{\circ})\\
&=b+s+c-d,
\end{align*}
where we again used Lemma \ref{lem:Ind-Swan} in the second equality.
We investigate the conjectural inequality \eqref{ineq:Henniart} by a case-by-case argument:
\begin{enumerate}
\item
If $c>s$, then $d=c$ by Lemma \ref{lem:relations} (1).
Thus, by using Lemma \ref{lem:squaring}, we have 
\[
\Sw(\Sym^{2}\rho)-\Sw(\wedge^{2}\rho)=b+s+c-c=b+s< a+s=\Sw(\rho).
\]
Note that this is the strict inequality.
\item
If $c<s$, then $d=s$ by Lemma \ref{lem:relations} (1).
Thus, by using Lemma \ref{lem:squaring}, we have 
\[
\Sw(\Sym^{2}\rho)-\Sw(\wedge^{2}\rho)=b+s+c-s=b+c<a+s=\Sw(\rho).
\]
Note that this is the strict inequality.
\item
If $c=s$, we have $d\leq c$ by Lemma \ref{lem:relations} (1).
Let us show that the weak inequality 
\[
\Sw(\Sym^{2}\rho)-\Sw(\wedge^{2}\rho)=b+s+c-d\leq a+s=\Sw(\rho),
\]
or equivalently, $a-b-c+d\geq0$ holds in this case.
\begin{enumerate}
\item
Suppose that $a>4e$, hence $b=a-2e$ by Lemma \ref{lem:squaring}.
Since we have $s\leq2e$ by Lemma \ref{lem:relations} (3), we get $b+c=a-2e+s\leq a$, which implies that $a-b-c+d\geq0$.
Note that we have the strict inequality unless $d=0$ and $s=2e$.
\item
Suppose that $a<4e$ and $a$ is even, hence $b=a/2$ by Lemma \ref{lem:squaring}.
Since we have $a\geq 2c$ by Lemma \ref{lem:relations} (2), we get $b+c\leq a$, which implies that $a-b-c+d\geq0$.
Note that we have the strict inequality unless $d=0$ and $a=2c$.
\item
Suppose that $a\leq4e$, hence $b\leq a/2$ by Lemma \ref{lem:squaring}.
Thus the same discussion as in the previous case works.
Note that we have the strict inequality unless $d=0$ and $a=2b=2c$.
\end{enumerate}
Note that $d=0$ means that $\rho$ has trivial determinant (recall that we are assuming $G=G_{1}$), i.e. is symplectic.
\end{enumerate}

Thus we obtain the following:
\begin{thm}\label{thm:Henniart2-dim2}
Conjecture \ref{conj:Henniart2} is true if $\dim(\rho)=2$.
Moreover, the inequality is strict unless $\rho$ is symplectic.
\end{thm}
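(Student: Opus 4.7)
The plan is to reduce to the monomial form and then perform a direct case analysis on Swan exponents of characters. Since $G=G_{1}$ is a wild $2$-group, every irreducible representation of $G$ is monomial, so I write $\rho\cong\Ind_{G'}^{G}(\chi)$ with $G'=\Gal(E/F')$ corresponding to a ramified quadratic extension $F'/F$ and $\chi$ a character of $F^{\prime\times}$. Let $\omega:=\omega_{F'/F}$ and $\chi^{\circ}:=\chi|_{F^{\times}}$, and introduce the five exponents $a:=\Sw(\chi)$, $b:=\Sw(\chi^{2})$, $c:=\Sw(\chi^{\circ})$, $d:=\Sw(\omega\chi^{\circ})$, and $s:=\Sw(\omega)$.

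Using the standard decomposition
\[
\Sym^{2}(\Ind_{F'/F}\chi)\cong\Ind_{F'/F}(\chi^{2})\oplus\chi^{\circ},\qquad \wedge^{2}(\Ind_{F'/F}\chi)\cong\omega\chi^{\circ},
\]
together with Lemma \ref{lem:Ind-Swan}, the inequality \eqref{ineq:Henniart} translates to $b+c-d\leq a$, to be proved strict unless $d=0$. The condition $d=0$ is equivalent to $\det\rho=\omega\chi^{\circ}$ being trivial (since $G=G_{1}$), i.e., $\rho$ being symplectic, which is precisely the allowed exception in the theorem.

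Next I would establish the elementary relations: $d\leq\max\{c,s\}$ with equality when $c\neq s$; $a\geq 2c$ (from $F'/F$ being ramified); $0<s\leq 2e$ with $e:=v_{F}(2)$ (from the classical bound on the different of a ramified quadratic extension); and $a\geq s$ (forced by irreducibility of $\Ind\chi$, since otherwise $\chi^{\sigma}=\chi$). The key input is the squaring lemma: expanding $(1+x)^{2}=1+2x+x^{2}$ for $x\in\mfp_{F'}^{i}$ and comparing $v_{F'}(2x)=i+2e$ with $v_{F'}(x^{2})=2i$, one obtains $b=a-2e$ if $a>4e$; $b=a/2$ if $a<4e$ and $a$ is even; and $b\leq a/2$ otherwise. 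In every case $b<a$.

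The theorem then follows from a trichotomy on $c$ versus $s$. If $c>s$, then $d=c$ and the difference $b+c-d=b<a$ strictly. If $c<s$, then $d=s$ and $b+c-d=b+(c-s)<a$ strictly, again from $b<a$ together with $c<s$. The delicate case is $c=s$, where $d$ may drop all the way to $0$; here I subdivide by $a$ versus $4e$ and combine $s\leq 2e$ with $a\geq 2c$ to get $b+c\leq a$ in each subcase, with strictness failing only at boundary configurations that all force $d=0$. I expect the main obstacle to be the squaring lemma: in residue characteristic $2$ the linear term $2x$ and the quadratic term $x^{2}$ of $(1+x)^{2}$ genuinely compete, and pinning down $b$ exactly (not merely as an upper bound) in the two extreme regimes $a>4e$ and $a<4e$ is what ultimately makes the equality analysis isolate the symplectic case as the unique source of equality.
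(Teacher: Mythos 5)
Your proposal is correct and follows essentially the same route as the paper: the same monomial realization $\rho\cong\Ind_{F'/F}(\chi)$, the same five exponents $a,b,c,d,s$ with the relations of Lemma \ref{lem:relations}, the same squaring analysis of Lemma \ref{lem:squaring}, the decomposition of Proposition \ref{prop:Ind-Sym-Ext} reducing the claim to $b+c-d\leq a$, and the same trichotomy on $c$ versus $s$ with the $c=s$ case split by $a$ against $4e$, equality forcing $d=0$, i.e.\ the symplectic case.
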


\begin{rem}
Let us present an example where the equality $\Sw(\Sym^{2}\rho)-\Sw(\wedge^{2}\rho)=\Sw(\rho)$ holds.
Take $F'/F$ to have $s=1$, and take $\chi$ such that $a=2$ and $\chi^{\circ}=\omega$; then we have $c=1, b=1, d=0$.
\end{rem}

\subsection{The case where $\rho$ is induced along a cyclic extension}\label{subsec:cyclic}

\begin{prop}
Suppose that $\rho$ is an irreducible Galois representation of the form $\rho=\Ind_{F'/F}(\chi)$, where $F'/F$ is cyclic and $\chi$ is a character of $F^{\prime\times}$. 
Then the inequality \eqref{ineq:Henniart} holds.
\end{prop}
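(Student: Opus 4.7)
My plan is to reduce the general cyclic case to the already-established two-dimensional case (Theorem~\ref{thm:Henniart2-dim2}) via a Mackey-style decomposition of $\rho \otimes \rho$. Let $n = [F':F]$, let $\sigma$ generate $\Gal(F'/F)$, and take $1, \sigma, \ldots, \sigma^{n-1}$ as coset representatives. First I would establish the decomposition
\[
\rho \otimes \rho \;\cong\; \bigoplus_{k=0}^{n-1} V_k, \qquad V_k := \Ind_{F'/F}(\chi \cdot \chi^{\sigma^k}),
\]
with the tensor swap $\tau$ sending $V_k$ to $V_{-k}$. For $k \notin \{0, n/2\}$ the pair $V_k \oplus V_{-k}$ splits symmetrically between $\Sym^2\rho$ and $\wedge^2\rho$, so cancels in the difference. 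Thus only $V_0$ and (when $n$ is even) $V_{n/2}$ contribute to $\Sym^2\rho\ominus\wedge^2\rho$.

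The summand $V_0 = \Ind_{F'/F}(\chi^2)$ lies entirely in $\Sym^2\rho$, and by the induction formula of Section~\ref{sssec:Induction},
\[
\Sw(\Ind_{F'/F}\chi^2) - \Sw(\rho) \;=\; f_{F'/F}\bigl(\Sw(\chi^2) - \Sw(\chi)\bigr) \;\le\; 0,
\]
since squaring never increases the Swan exponent of a character when $p = 2$. This already finishes the case of odd $n$.

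For even $n$, let $F''$ denote the unique intermediate field with $[F':F'']=2$. The summand $V_{n/2}$ splits as $V_{n/2} = V_{n/2}^+ \oplus V_{n/2}^-$ into the $\pm 1$ eigenspaces of $\tau$. The hard part will be to correctly identify
\[
V_{n/2}^+ \;\cong\; \Ind_{F''/F}\!\bigl(\chi|_{F''^\times}\bigr), \qquad V_{n/2}^- \;\cong\; \Ind_{F''/F}\!\bigl(\omega_{F'/F''}\cdot\chi|_{F''^\times}\bigr),
\]
matching the convention of the $n=2$ analysis in Section~\ref{subsec:2-dim}. I expect this to follow from the direct computation that $\sigma^{n/2}$ acts on $V_{n/2}$ as the scalar $\chi(\sigma^n)$ times $\tau$ (so on $V_{n/2}^\pm$ as $\pm\chi(\sigma^n)$), which pins down the two extensions of $\chi\cdot\chi^{\sigma^{n/2}}$ from $\Gal(\overline F/F')$ to $\Gal(\overline F/F'')$ and singles out the one corresponding to $\chi|_{F''^\times}$ under class field theory.

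Finally, the irreducibility of $\rho$ forces $\chi^{\sigma^{n/2}} \neq \chi$, so $\rho^* := \Ind_{F'/F''}(\chi)$ is an irreducible two-dimensional Galois representation over $F''$ to which Theorem~\ref{thm:Henniart2-dim2} applies. The $n=2$ decomposition (Section~\ref{subsec:2-dim}) gives $\Sym^2\rho^* = \Ind_{F'/F''}(\chi^2) \oplus \chi|_{F''^\times}$ and $\wedge^2\rho^* = \omega_{F'/F''}\cdot\chi|_{F''^\times}$, so Theorem~\ref{thm:Henniart2-dim2} reads
\[
\Sw_{F''}(\Ind_{F'/F''}\chi^2) + \Sw_{F''}(\chi|_{F''^\times}) - \Sw_{F''}(\omega_{F'/F''}\chi|_{F''^\times}) \;\le\; \Sw_{F''}(\rho^*).
\]
Multiplying through by $f_{F''/F}$ and invoking the transitivity $\Ind_{F'/F} = \Ind_{F''/F}\circ\Ind_{F'/F''}$---so that the dimension-correction constants from the Swan induction formula cancel on both sides---turns this into
\[
\Sw(\Ind_{F'/F}\chi^2) + \Sw(V_{n/2}^+) - \Sw(V_{n/2}^-) \;\le\; \Sw(\rho),
\]
whose left-hand side is exactly $\Sw(\Sym^2\rho) - \Sw(\wedge^2\rho)$ by the identifications above. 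This would close the argument.
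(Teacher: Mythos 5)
Your argument is correct and is essentially the paper's proof: both routes identify the virtual difference $\Sym^{2}\rho-\wedge^{2}\rho$ with $\Ind_{F'/F}(\chi^{2})+\Ind_{F''/F}(\chi|_{F''^{\times}})-\Ind_{F''/F}(\omega_{F'/F''}\chi|_{F''^{\times}})$ (the paper's $L$ is your $F''$) and then reduce, via the induction formula for Swan exponents with the dimension terms cancelling, to the two-dimensional inequality of Theorem~\ref{thm:Henniart2-dim2} applied to $\Ind_{F'/F''}(\chi)$ over $F''$. The only difference is in how the identification is derived --- you use the Mackey decomposition of $\rho\otimes\rho$ and the swap-eigenvalue computation (your step ``singles out the one corresponding to $\chi|_{F''^{\times}}$ under class field theory'' is precisely the transfer theorem, \cite[29.1]{BH06}, which should be cited), whereas the paper computes the character $\Psi^{2}\rho$ directly via the Frobenius formula and the same transfer theorem; your version also covers the case of odd degree $[F':F]$ explicitly, which the paper's write-up leaves implicit.
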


\begin{proof}
Let $\rho=\Ind_{F'/F}(\chi)$, where $F'/F$ is cyclic and $\chi$ is a character of $F^{\prime\times}$. 
In other words, $F'$ is a subfield of $E$ corresponding to a normal subgroup $G'$ of $G$ such that $G/G'\cong\Gal(F'/F)$ is cyclic, say order $2n$.
Let $c$ be a generator of $G/G'$.
Let $H$ be a subgroup of $G$ which is the preimage of $\langle c^{n}\rangle\subset G/G'$.
Let $L$ be the subfield of $F'$ corresponding to $H$.

By the Frobenius formula for induced representations, we have
\[
\Tr(\Psi^{2}\rho)(g)
=
\Tr(\rho)(g^{2})
=
\begin{cases}
\sum_{\tau\in G/G'} \chi^{\tau}(g^{2}) & \text{if $g^{2}\in G'$,}\\
0 & \text{otherwise.}
\end{cases}
\]
Note that, as $G/G'$ is cyclic, we have $g^{2}\in G'$ if and only if $g\in H$.
Similarly, we have
\[
\Tr(\Ind_{F'/F}(\chi^{2}))(g)
=
\begin{cases}
\sum_{\tau\in G/G'} (\chi^{2})^{\tau}(g) & \text{if $g\in G'$,}\\
0 & \text{otherwise.}
\end{cases}
\]
We let $\chi^{\circ}$ denote the character of $H$ corresponding to $\chi|_{L^{\times}}$ under local class field theory.
By letting $\omega$ be the non-trivial character of $H/G'$, we have
\[
\Tr(\Ind_{L/F}(\chi^{\circ})-\Ind_{L/F}(\chi^{\circ}\omega))(g)
=
\begin{cases}
0 & \text{if $g\in G'$,}\\
2\sum_{\tau\in G/H} (\chi^{\circ})^{\tau}(g) & \text{if $g\in H\smallsetminus G'$,}\\
0 & \text{otherwise.}
\end{cases}
\]

Therefore, by noting that $\chi(g^{2})=\chi^{\circ}(g)$ for any $g\in H\smallsetminus G'$ (see \cite[29.1, Transfer theorem]{BH06}), we get
\[
\Tr(\Psi^{2}\rho)
=
\Tr(\Ind_{F'/F}(\chi^{2}) + \Ind_{L/F}(\chi^{\circ}) - \Ind_{L/F}(\chi^{\circ}\omega)).
\]
Hence
\begin{align*}
\Sym^{2}\rho-\wedge^{2}\rho
&\cong
\Ind_{F'/F}(\chi^{2}) + \Ind_{L/F}(\chi^{\circ}) - \Ind_{L/F}(\chi^{\circ}\omega)\\
&\cong
\Ind_{L/F}\bigl(\Ind_{F'/L}(\chi^{2}) + \chi^{\circ}- \chi^{\circ}\omega\bigr).
\end{align*}
On the other hand, we also have $\rho\cong \Ind_{L/F}(\Ind_{F'/L}(\chi))$.
Hence, by Lemma \ref{lem:Ind-Swan}, it suffices to show the inequality 
\begin{align}\label{ineq:cyclic}
\Sw(\Ind_{F'/L}(\chi^{2})) + \Sw(\chi^{\circ}) - \Sw(\chi^{\circ}\omega)
\leq
\Sw(\Ind_{F'/L}(\chi))
\end{align}
(note that $\dim(\Sym^{2}\rho)-\dim(\wedge^{2}\rho)=\dim(\rho)$).

As observed in the proof of Theorem \ref{thm:Henniart2-dim2}, this inequality \eqref{ineq:cyclic} is nothing but the inequality \eqref{conj:Henniart} for $\Ind_{F'/L}(\chi)$, which we have already proved.
\end{proof}

\subsection{Approach based on the structure of ramification subgroups}\label{subsec:approach-ramif-group}

We next investigate the conjectural inequality by looking at the last two steps of the ramification filtration.
Let us assume that $G=G_{1}\neq\{1\}$ and $\rho$ is an irreducible faithful representation of $G$.
We let $n=\dim(\rho)$, a power of $2$. 
Since the cases where $n=1,2$ are already treated, let us assume that $n>2$ (most considerations also work when $n=2$).
Note that the center $Z(G)$ of $G$ is cyclic by Lemma \ref{lem:p-group-faithful-center}.

Let $Z\subset G$ be the last non-trivial lower ramification subgroup.
Since $G=G_1$, $Z$ is contained in the center $Z(G)$ of $G$ (this follows from \cite[Chapter IV, Proposition 10]{Ser79}).
By noting that $Z$ has an $\F_{2}$-vector space structure (\cite[Chapter IV, Section 2, Corollary 3]{Ser79}) and that $Z(G)$ is cyclic, the order of $Z$ is $2$ and $Z\neq G$ since $n>1$.

Let us look at the smallest ramification subgroup $H$ containing $Z$ properly.
The quotient $V=H/Z$ is a non-trivial $\F_{2}$-vector space, equipped with a quadratic form $q\colon V \rightarrow Z\cong\F_{2};\, v\mapsto v^{2}$ and an alternating bilinear form $b\colon V\times V \rightarrow Z\cong\F_{2};\, (v_{1},v_{2})\mapsto v_{1}v_{2}v_{1}^{-1}v_{2}^{-1}$.
(See \cite[Chapter IV, Proposition 10]{Ser79}.)
The center $Y$ of $H$ is the inverse image of 
\[
V^{0}:=\{v_{0}\in V \mid \text{$b(v_{0},v)=0$ for any $v\in V$}\}
\]
in $H$.
Note that $q|_{V^{0}}\colon V^{0}\rightarrow Z$ is a group homomorphism.
We have either that $q|_{V^{0}}$ is trivial (then $Y$ has exponent $2$) or that $q|_{V^{0}}$ is non-trivial, hence $\Ker(q|_{V^{0}})$ is a hyperplane in $V^{0}$ (then $Y$ has elements of order $4$).

The group $G$ acts by conjugation on $H$ and $Y$.
As we have $G=G_{1}$, $G$ acts trivially on $V=H/Z$ (again by \cite[Chapter IV, Proposition 10]{Ser79}); this action preserves $b$ and $q$. 
Similarly, the action of $G$ on $V^{0}=Y/Z$ is trivial, hence any $g\in G$ defines a homomorphism $\phi_{g}\colon V^{0}\rightarrow Z$ given by $v\mapsto gvg^{-1}v^{-1}$.
Since $\phi_{g}$ is multiplicative on $g$, we get a homomorphism
\[
\phi\colon G\rightarrow\Hom_{\F_{2}}(V^{0},Z);\quad g\mapsto\phi_{g},
\]
which is trivial on $H$.
We define a subspace $W$ of $V^{0}$ by $W:=\bigcap_{g\in G}\Ker\phi_{g}$.
If we let $I\subset Y$ be the inverse image of $W$ in $Y$, then we have $I=H\cap Z(G)$.
As $Z(G)$ is cyclic, we see that $I$ is either $Z$ or cyclic of order $4$.

\[
\xymatrix@R=10pt{
H\ar@{->>}[r] & V:=H/Z \ar^-{q}[r]& Z\\
Y\ar@{->>}[r]\ar@{}[u]|{\cup} & V^{0}\ar_-{\quad q|_{V^{0}}}[ru]\ar@{}[u]|{\cup} & \\
I \ar@{}[u]|{\cup}\ar@{->>}[r]&W \ar@{}[u]|{\cup}&\\
Z \ar@{}[u]|{\cup}\ar@{->>}[r]&0 \ar@{}[u]|{\cup}&\\
}
\]

\subsubsection{}
Let us first consider the case where $I$ is cyclic of order $4$.

\begin{prop}\label{prop:I-order-4}
When $I$ is of order $4$, the strict inequality holds in Conjecture \ref{conj:Henniart2}.
\end{prop}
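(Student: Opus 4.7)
The plan is to exploit the fact that $I$ is cyclic of order $4$ and contained in $Z(G)$: combined with the faithfulness of $\rho$, this forces $I$ to act on the underlying space of $\rho$ by a scalar character $\chi_{I}\colon I\to\C^{\times}$ of order exactly $4$. Consequently, both $\Sym^{2}\rho$ and $\wedge^{2}\rho$ carry the scalar action of $I$ through the character $\chi_{I}^{2}$, which is of order $2$ and in particular nontrivial. This immediately gives $(\Sym^{2}\rho)^{I}=0$ and $(\wedge^{2}\rho)^{I}=0$.

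The next step is to combine this with the position of $I$ inside the ramification filtration. Let $m$ be the largest integer with $G_{m}\neq\{1\}$, so that $G_{m}=Z$, and let $m_{1}$ be the largest integer with $G_{m_{1}}\supsetneq Z$, so that $G_{m_{1}}=H$ and $m_{1}<m$. For every $1\leq i\leq m_{1}$ we have $I\subset Y\subset H\subset G_{i}$, hence $(\Sym^{2}\rho)^{G_{i}}\subset(\Sym^{2}\rho)^{I}=0$ and similarly $(\wedge^{2}\rho)^{G_{i}}=0$. For $m_{1}<i\leq m$ we have $G_{i}=Z$, which acts on $\rho$ through a character of order $2$ and therefore trivially on $\Sym^{2}\rho$ and $\wedge^{2}\rho$, so those indices contribute nothing to the Swan sum. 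Plugging these into the defining formula of the Swan exponent gives
\[
\Sw(\Sym^{2}\rho)=\dim(\Sym^{2}\rho)\cdot\varphi_{E/F}(m_{1}),\qquad\Sw(\wedge^{2}\rho)=\dim(\wedge^{2}\rho)\cdot\varphi_{E/F}(m_{1}),
\]
and hence $\Sw(\Sym^{2}\rho)-\Sw(\wedge^{2}\rho)=n\cdot\varphi_{E/F}(m_{1})$.

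Finally, by Section \ref{sssec:upper} we have $\Sw(\rho)=n\cdot\varphi_{E/F}(m)$, so the desired strict inequality reduces to $\varphi_{E/F}(m_{1})<\varphi_{E/F}(m)$, which holds because the Herbrand function $\varphi_{E/F}$ is strictly increasing and $m_{1}<m$. There is no serious obstacle in this argument once one notices that the order-$4$ character $\chi_{I}$ survives as a nontrivial order-$2$ scalar on $\Sym^{2}\rho\oplus\wedge^{2}\rho$, which kills all invariants under the entire layer from $G_{1}$ down to $H$; this is precisely what fails in the opposite case $I=Z$, explaining why that case is more subtle.
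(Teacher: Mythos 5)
Your proof is correct and follows essentially the same route as the paper's: the key point in both is that a generator of the central order-$4$ group $I$ acts on $\rho$ by a primitive fourth root of unity, hence by $-1$ on $\rho\otimes\rho$, which forces $\Sw(\Sym^{2}\rho)$ and $\Sw(\wedge^{2}\rho)$ to be governed by the level of $H$ while $\Sw(\rho)$ is governed by the strictly deeper level of $Z$. The only difference is cosmetic: you phrase the computation via the lower-numbering sum and the Herbrand values $\varphi_{E/F}(m_{1})<\varphi_{E/F}(m)$, whereas the paper writes the same quantities as slopes $\alpha'<\alpha$ in the upper numbering.
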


\begin{proof}
We write $Z=G^{\alpha}$ and $H=G^{\alpha'}$ via the upper numbering.
As $\rho$ is irreducible and faithful, we have $\slope(\rho)=\alpha$, hence $\Sw(\rho)=n\alpha$ (see Section \ref{sssec:upper}).
Since $I$ acts on $\rho$ as a faithful $1$-dimensional character, any generator of $I$ acts by $-1$ on $\rho\otimes\rho$. 
This implies that any irreducible constituent of $\rho\otimes\rho\cong\Sym^{2}\rho\oplus\wedge^{2}\rho$ has slope $\alpha'$.
Hence we have $\Sw(\Sym^2\rho)=n(n+1)\alpha'/2$ and $\Sw(\wedge^2\rho)=n(n-1)\alpha'/2$.
Thus we get 
\[
\Sw(\Sym^2\rho)-\Sw(\wedge^2\rho)
=n\alpha'
<n\alpha
=\Sw(\rho),
\]
which is the strict inequality.
\end{proof}

\subsubsection{}
We now assume that $I=Z$, which means that $W=0$.
Note that $\mathrm{Im}(\phi)$ is a subspace of the dual $(V^{0})^{\vee}$ of $V^{0}$, and its orthogonal in $V^{0}$ is $W$ by the definition of $W$.
Hence we have $\mathrm{Im}(\phi)=\Hom_{\F_{2}}(V^0,Z)$.
Now an irreducible representation of $H$ non-trivial on $Z$ is determined by its central character $\chi$, which can be any character on $Y$ which is non-trivial on $Z$ (see Section \ref{subsec:Heisenberg}).
The action of $g$ in $G$ on $\chi$ is obtained by multiplying $\chi$ by $\phi_g$.
An irreducible component of the restriction of $\rho$ to $H$ is thus labelled by such a character $\chi$, and $G$ acts transitively on (the classes of) such irreducible components.

\begin{prop}\label{prop:I-equal-Z-and-H-irr}
When $I=Z$ and $\rho$ restricts irreducibly to $H$, the inequality holds.
Moreover, the inequality is strict unless $\rho$ is symplectic.
\end{prop}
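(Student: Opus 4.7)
The plan is to exploit the Heisenberg structure on $H$ forced by the hypotheses, decompose $\rho \otimes \rho$ as a sum of $1$-dimensional $G$-representations, and then branch according to whether $\sigma := \rho|_{H}$ is orthogonal or symplectic.

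First, I claim that $I = Z$ combined with irreducibility of $\rho|_{H}$ forces $V^{0} = 0$. Indeed, any irreducible representation of $H$ non-trivial on $Z$ is determined by its central character $\chi$ on $Y$, and $G$ acts on $\chi$ by multiplication by $\phi_{g}$; irreducibility of $\rho|_{H}$ requires $\chi$ to be $G$-stable, i.e.\ $\phi_{g} = 0$ for every $g \in G$. Since $W = 0$ means the image of $\phi$ equals $\Hom_{\F_{2}}(V^{0}, Z)$, this forces $V^{0} = 0$. Hence $Y = Z$, $H$ is a finite Heisenberg $2$-group, $V = H/Z$ is a non-degenerate symplectic $\F_{2}$-space of dimension $2k$ with $n = 2^{k}$, and $\sigma$ is the unique irrep of $H$ with central character $\chi|_{Z} \ne \mathbf{1}$, which is self-dual since $\chi|_{Z}$ has order $2$.

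Writing $Z = G^{\alpha}$ and $H = G^{\alpha'}$ so that $\Sw(\rho) = n\alpha$, self-duality gives $\sigma \otimes \sigma \cong \sigma \otimes \sigma^{\vee} \cong \End(\sigma) \cong \bigoplus_{\psi \in \hat{V}} \psi$ as $H$-representations, each summand $1$-dimensional. Since $G = G_{1}$ acts trivially on $V$ (hence on $\hat{V}$), each isotypic line is $G$-stable, yielding $\rho \otimes \rho = \bigoplus_{\psi} \tilde{\psi}$ as $G$-representations where each $\tilde{\psi}$ is a character of $G$ extending $\psi$. Set $\delta := \tilde{\mathbf{1}}$: this is a character of $G/H$ with $\slope(\delta) \le \alpha''$ (the next upper ramification jump of $G$ below $\alpha'$, or $0$ if $H = G$), while for $\psi \ne \mathbf{1}$ one has $\slope(\tilde{\psi}) = \alpha'$. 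The type of $\sigma$ is controlled by the Arf invariant of the quadratic form $q\colon V \to Z$, $v \mapsto v^{2}$: $\sigma$ is orthogonal (so $\delta \subset \Sym^{2}\rho$) when $\mathrm{Arf}(q) = 0$, and symplectic (so $\delta \subset \wedge^{2}\rho$) when $\mathrm{Arf}(q) = 1$.

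In the orthogonal case the Swan difference equals $(n-1)\alpha' + \slope(\delta) \le (n-1)\alpha' + \alpha'' < n\alpha' < n\alpha$, giving the strict inequality. In the symplectic case it equals $(n+1)\alpha' - \slope(\delta)$, reducing the proof to the key inequality $(n+1)\alpha' \le n\alpha$; granted this, strictness whenever $\rho$ is not symplectic follows because $\delta \ne \mathbf{1}$ forces $\slope(\delta) > 0$ (as $G = G_{1}$ admits no non-trivial unramified characters), while $\rho$ symplectic corresponds exactly to the equality case $\delta = \mathbf{1}$. To establish $(n+1)\alpha' \le n\alpha$ I would translate into lower numbering: with $H = G_{j}$ and $Z = G_{m}$, the commutator constraint $[G_{j}, G_{j}] \subseteq G_{2j+1}$ from \cite[Chapter~IV]{Ser79} applied to $[H, H] = Z \ne 1$ forces $m \ge 2j + 1$, and the squaring constraint on the non-trivial quadratic form $q$ pushes $m$ still further via the leading-term analysis of $\sigma^{2}(\pi) - \pi$. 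The main obstacle will be handling the general case $G \ne H$, where upper jumps strictly below $\alpha'$ can inflate $\alpha'$ relative to $j$; the natural route is to track the Herbrand function carefully, using the $\mathrm{Arf} = 1$ hypothesis to refine the squaring constraint and exclude the borderline configurations where $(n+1)\alpha' = n\alpha$ would otherwise fail.
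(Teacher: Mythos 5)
Your first two steps are correct and are essentially the paper's own: $I=Z$ plus irreducibility of $\rho|_{H}$ forces $V^{0}=0$, and the multiplicity-free decomposition $\rho\otimes\rho\cong\bigoplus_{\psi\in\hat{V}}\tilde{\psi}$ into $G$-stable lines (a mild refinement of the paper, which only records that exactly one constituent has slope $<\alpha'$ and that it is a character) together with the slope computation correctly reduces the whole proposition — inequality, and strictness unless $\rho$ is symplectic — to the single estimate $(n+1)\alpha'\leq n\alpha$, exactly the lemma the paper proves last.

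The gap is that this key estimate is only sketched, and the sketched route is too weak to deliver it. In lower numbering, with $H=G_{j}$ and $Z=G_{m}$, Serre's commutator estimate gives only $m\geq 2j+1$; but already in the extreme case $H=G$ (so $|G|=2n^{2}$, $\alpha'=j$ and $\alpha=j+(m-j)/n^{2}$) the inequality $(n+1)\alpha'\leq n\alpha$ is equivalent to $m\geq(n+1)j$, a bound growing linearly in $n=2^{k}$. Estimates coming from commutators of pairs of elements, or from squaring inside cyclic subgroups (which only see the absolute ramification of the base field), do not detect the size of $H/Z$ and so cannot produce a bound of this shape; note also that the estimate holds irrespective of the Arf invariant, so invoking $\mathrm{Arf}(q)=1$ to rescue borderline configurations points in the wrong direction. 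What is actually needed — and what the paper does, following \cite[Th\'eor\`eme 1.8]{Hen80} — is an argument exploiting an index-$n$ subgroup: one first shows $G=Z(G)H$ (Lemma \ref{lem:ZH=G}), writes $\rho\cong\Ind_{\tilde{X}}^{G}(\tilde{\chi})$ with $\tilde{X}=Z(G)X$ for $X/Z$ a maximal isotropic subspace, and then combines the induction (conductor--discriminant) formula for Swan exponents with Buhler's lower bound for $\Sw(\tilde{\chi})$ and the formula for $v_{F}(\mfd_{E^{\tilde{X}}/F})$ from \cite{Hen80}, concluding by Herbrand-function manipulations. Without an argument of this kind, the symplectic case — the only one where equality can occur, and the heart of the statement — remains unproven.
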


In the rest of this subsection, we prove this proposition.
Since $\rho|_{H}$ is irreducible, the center $Y$ of $H$ is equal to $I=Z$ by the same reasoning as in the beginning of this subsection.
Hence we have $V^0=0$. 
In particular $V$ is a symplectic vector space (meaning that the alternating form on it is non-degenerate). 
That representation is self-dual because the central character is the order $2$ character of $Z$.

We write $Z=G^\alpha$ and $H=G^{\alpha'}$ as above (thus $\alpha'<\alpha$).
Then we have $\Sw(\rho)=n\alpha$.
Since $\rho|_{H}$ is irreducible and self-dual, $\rho\otimes\rho|_{H}$ contains the trivial representation exactly once.
This implies that exactly one irreducible constituent of $\rho\otimes\rho$ has slope smaller than $\alpha'$.
Moreover, such a constituent must be a $1$-dimensional character.

\begin{enumerate}
\item
If $\rho$ is self-dual, then $\rho\otimes\rho$ contains the trivial representation with multiplicity one. 
\begin{enumerate}
\item
If $\rho$ is orthogonal (i.e., the trivial representation appears in $\Sym^{2}\rho$), we have $\Sw(\Sym^2\rho)=\alpha'(n(n+1)/2-1)$ and $\Sw(\wedge^2\rho)=\alpha' n(n-1)/2$, hence 
\[
\Sw(\Sym^2\rho)-\Sw(\wedge^2\rho)
=(n-1)\alpha'
<n\alpha
=\Sw(\rho).
\]
\item
If $\rho$ is symplectic (i.e., the trivial representation appears in $\wedge^{2}\rho$), we have $\Sw(\Sym^2\rho)=\alpha' n(n+1)/2$ and $\Sw(\wedge^2\rho)=\alpha'(n(n-1)/2-1)$, hence 
\[
\Sw(\Sym^2\rho)-\Sw(\wedge^2\rho)=(n+1)\alpha'.
\]
\end{enumerate}
\item
If $\rho$ is not self-dual, then the above observation on the slopes of the irreducible constituents of $\rho\otimes\rho$ implies that
 $\Sw(\Sym^2\rho)=\alpha'(n(n+1)/2-1)+\gamma$ and $\Sw(\wedge^2\rho)=\alpha'(n(n-1)/2-1)+\delta$ with $0\leq\gamma\leq\alpha'$ and  $0\leq\delta\leq\alpha'$.
Moreover, as $G=G_{1}$, $\delta$ cannot be $0$ (if $\delta$=0, then it means that $\wedge^{2}\rho$ contains the trivial character of $G$, hence $\rho$ is self-dual).
Thus we get
\[
\Sw(\Sym^2\rho)-\Sw(\wedge^2\rho)=n\alpha'+\gamma-\delta<(n+1)\alpha'.
\]
\end{enumerate}

\begin{lem}\label{lem:ZH=G}
We have $Z(G)H=G$.
\end{lem}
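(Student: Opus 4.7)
The plan is to exploit the commutator pairing between $G$ and $H$, together with the irreducibility of $\rho|_{H}$, to pin down the index $[G:H Z(G)]$ by a counting argument.

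First, since $G=G_{1}$ acts trivially on $V=H/Z$, for any $g\in G$ and $h\in H$ the commutator $[g,h]$ lies in $Z$. A short computation (using that $Z$ is central) shows that $(g,h)\mapsto [g,h]$ is bi-multiplicative and factors through a $\Z$-bilinear pairing
\[
\Phi\colon G/H\times V \longrightarrow Z \cong \F_{2},
\]
or, equivalently, a homomorphism $\Phi\colon G\to\Hom_{\F_{2}}(V,Z)=V^{\vee}$ which is trivial precisely on the centralizer $C_{G}(H)$.

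Next I would identify $C_{G}(H)$ with $Z(G)$. The inclusion $Z(G)\subset C_{G}(H)$ is obvious. Conversely, if $g\in C_{G}(H)$, then $\rho(g)$ commutes with the irreducible representation $\rho|_{H}$, hence is scalar by Schur's lemma; but then $\rho(g)$ commutes with all of $\rho(G)$, and the faithfulness of $\rho$ forces $g\in Z(G)$. Thus $\Phi$ induces an injection $\bar{\Phi}\colon G/Z(G)\hookrightarrow V^{\vee}$.

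The remaining step is to show that $\bar\Phi$ is surjective, and for this I restrict $\Phi$ to $H$ itself: the map $H\to V^{\vee}$, $h\mapsto [h,\cdot]$, factors through $V$ and coincides there with the map $V\to V^{\vee}$ coming from the alternating form $b$. Since $V^{0}=0$ by the hypothesis $I=Z$, the form $b$ is non-degenerate, so $\Phi(H)=V^{\vee}$, and a fortiori $\bar\Phi$ is onto. Counting orders,
\[
\bigl[G:Z(G)\bigr]
= |V^{\vee}|
= |V|
= \bigl[H:Z\bigr]
= \bigl[H:H\cap Z(G)\bigr]
= \bigl[H\,Z(G):Z(G)\bigr],
\]
which forces $G=H\,Z(G)$.

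The only real subtlety is verifying that the commutator pairing really lands in $Z$ (not merely in $H$) and that its restriction to $H\times H$ recovers the alternating form $b$ used to certify non-degeneracy; both come straight from \cite[Chapter IV, Proposition 10]{Ser79}, which has already been used in this section, so the argument is essentially just Schur's lemma plus order counting.
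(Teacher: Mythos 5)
Your proof is correct and takes essentially the same route as the paper: the same commutator homomorphism $\Phi\colon G\to\Hom_{\F_{2}}(V,Z)$, surjectivity of $\Phi|_{H}$ from the non-degeneracy of $b$, and Schur's lemma plus faithfulness of $\rho$ to identify $\ker\Phi$ with $Z(G)$ (the paper sets $J=\ker\Phi$, obtains $G=JH$ directly by matching $\Phi_{g}$ with some $\Phi_{h}$, and only afterwards shows $J=Z(G)$; your index count is an equivalent finish). The only slip is one of attribution: $V^{0}=0$ does not follow from the hypothesis $I=Z$ alone, but from the irreducibility of $\rho|_{H}$ combined with $I=Z$ (your own Schur argument gives $Z(H)\subset C_{G}(H)=Z(G)$, hence $Z(H)\subset H\cap Z(G)=I=Z$), a fact the paper records just before the lemma, so invoking it is legitimate.
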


\begin{proof}
For $g\in G$ and $v\in V=H/Z$ with a lift $h\in H$, the commutator $ghg^{-1}h^{-1}$ does not depend on the choice of the lift $h$ and belongs to $Z$ since $G=G_{1}$ acts trivially on $V$.
Then $\Phi_{g}\colon V\rightarrow Z;\,v\mapsto ghg^{-1}h^{-1}$ is a linear homomorphism because we have, for $h,k\in H$, $ghkg^{-1}k^{-1}h^{-1}=(ghg^{-1}h^{-1})h(gkg^{-1}k^{-1})h^{-1}$ and $h$ commutes with $gkg^{-1}k^{-1}$ which is in $Z$.
Moreover, for $g,j\in G$, we have $gjhj^{-1}g^{-1}h^{-1}=g(jhj^{-1}h^{-1})g^{-1}(ghg^{-1}h^{-1})$, and $g$ commutes with $jhj^{-1}h^{-1}$, hence $gjhj^{-1}g^{-1}h^{-1}=(jhj^{-1}h^{-1})(ghg^{-1}h^{-1})$.
In other words, we have a group homomorphism 
\[
\Phi\colon G\rightarrow\Hom_{\F_{2}}(V,Z); \quad g\mapsto\Phi_{g}.
\]

Note that since $V$ is symplectic, the restriction of $\Phi$ to $H$ is surjective.
Let $J$ be the kernel of $\Phi$. 
The intersection of $H$ and $J$ is just $Z$, and $G=JH$.
Moreover, $J$ is the commutant of $H$ in $G$.
Thus, for any $j\in J$, $\rho(j)$ gives an $H$-automorphism of $\rho|_{H}$.
Since $\rho|_{H}$ is irreducible, this implies that $\rho(j)$ is a scalar multiplication by Schur's lemma.
Hence we get a homomorphism $\rho|_{J}\colon J\rightarrow \C^{\times}$.
Noting that $\rho$ is faithful, we see that $J$ is abelian.
Therefore, $J$ is central in $G$, which implies that $J=Z(G)$.
\end{proof}

\begin{lem}
We have $n\alpha\geq(n+1)\alpha'$
\end{lem}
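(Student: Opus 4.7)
The plan is to reinterpret $\alpha$ and $\alpha'$ through the lower ramification filtration of $G$ and the structure supplied by Lemma~\ref{lem:ZH=G}, and then to extract integer-valued invariants via Hasse--Arf that reduce the problem to a tractable quantitative statement.

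Write $2^c=|Z(G)|$ and $z_i:=|Z(G)\cap G_i|$. Since every lower ramification subgroup $G_i$ with $i\leq k$ is normal in $G$ and contains $H$, Lemma~\ref{lem:ZH=G} forces $G_i=H\cdot(G_i\cap Z(G))$, so $|G_i|=n^{2}z_i$. Plugging this into the Herbrand formula yields
\[
\alpha'=\frac{1}{2^{c}}\sum_{i=1}^{k}z_i
\qquad\text{and}\qquad
\alpha-\alpha'=\frac{m-k}{2^{c-1}n^{2}}.
\]
Let $\beta$ be the slope of a faithful character $\chi_0$ of the cyclic group $Z(G)=\Gal(E/E^{Z(G)})$; the same computation gives $\beta=\alpha'+(m-k)/2^{c-1}=\alpha'+n^{2}(\alpha-\alpha')$, so the target inequality $n\alpha\geq(n+1)\alpha'$ is equivalent to $\beta\geq(n+1)\alpha'$.

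Hasse--Arf now supplies two pieces of integrality: applied to the abelian quotient $G/Z\cong V\times Z(G)/Z$ (abelian thanks to $V^{0}=0$ together with Lemma~\ref{lem:ZH=G}), it gives $\alpha'\in\Z_{\geq 1}$; applied to the cyclic extension $E/E^{Z(G)}$, it gives $\beta\in\Z_{\geq 1}$. To upgrade these to the full bound $\beta\geq(n+1)\alpha'$, I will reduce to the self-dual symplectic case, where the equality $\chi_{0}^{2}=1$ forces $c=1$ and hence $G=H$, a Heisenberg $2$-group; the inequality then specializes to $m\geq(n+1)k$. (In the orthogonal or non-self-dual cases, the computation of $\Sw(\Sym^{2}\rho)-\Sw(\wedge^{2}\rho)$ carried out above already places this difference strictly below $(n+1)\alpha'$, so a weaker estimate on $\alpha$ is sufficient.) In this Heisenberg setting, Serre's commutator bound $[G_i,G_j]\subseteq G_{i+j}$, applied to the non-degenerate pairing $[H,H]=Z$ guaranteed by $V$ being symplectic, yields $m\geq 2k$; and Hasse--Arf applied to a maximal abelian (Lagrangian) subgroup $\tilde L\subset G$ forces the Swan conductor $k+(m-k)/n$ of a character $\tilde\chi$ of $\tilde L$ extending $\chi_{Z}$ to be an integer, whence $n\mid m-k$.

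The main obstacle will be bridging the gap between the weak consequences $m\geq 2k$ and $m\geq k+n$ and the target $m\geq(n+1)k$, which is strict once $k\geq 2$. I expect this to require iterating the Hasse--Arf integrality along a chain of abelian subgroups $Z\subsetneq M_{1}\subsetneq\cdots\subsetneq\tilde L$, together with careful book-keeping of the Swan contributions furnished by the decomposition
\[
\rho\otimes\rho^{\vee}\;\cong\;\Ind_{Z(G)}^{G}\mathbf{1}\;\cong\;\bigoplus_{\psi\in V^{\vee}}\tilde\psi,
\]
which comes from applying the projection formula to the identity $\Ind_{Z(G)}^{G}\chi_{0}\cong\rho^{\oplus n}$.
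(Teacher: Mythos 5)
Your opening reformulation is correct and is a nice observation: with $k$ and $m$ the last lower-numbering indices at which the ramification subgroup contains $H$, respectively is non-trivial, Lemma~\ref{lem:ZH=G} does give $|G_i|=n^{2}z_i$ for $i\le k$, hence $\beta=\alpha'+n^{2}(\alpha-\alpha')$ and the equivalence of $n\alpha\ge(n+1)\alpha'$ with $\beta\ge(n+1)\alpha'$; the two Hasse--Arf integrality statements are also correct. But the proposal does not prove the lemma. First, the ``reduction to the self-dual symplectic case'' is not a reduction of this statement: the lemma carries no self-duality hypothesis, and even for the enclosing Proposition~\ref{prop:I-equal-Z-and-H-irr} the non-self-dual case needs exactly the bound $(n+1)\alpha'\le n\alpha$ (there the difference is $n\alpha'+\gamma-\delta$ with $\gamma\le\alpha'$ and only $\delta>0$ known, so nothing weaker than the lemma closes that case without new control on $\gamma-\delta$). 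Second, and more seriously, in the case $G=H$ you reduce the claim to $m\ge(n+1)k$ but establish only $m\ge 2k$ (Serre's commutator bound in fact gives $[G_i,G_j]\subseteq G_{i+j+1}$, hence $m\ge 2k+1$) together with $n\mid m-k$; as you yourself acknowledge, for $k\ge 2$ these are far weaker than $m\ge(n+1)k$, and no mechanism is provided for the proposed ``iteration'' of Hasse--Arf along a chain of abelian subgroups. Integrality of jumps tells you the relevant Swan exponents are integers, not that they are large, so the essential quantitative input is missing.

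For comparison, the paper closes precisely this gap by a different route (following \cite[Th\'eor\`eme 1.8]{Hen80}): it writes $\rho\cong\Ind_{\tilde{X}}^{G}(\tilde{\chi})$ with $\tilde{X}=Z(G)X$ (using Lemma~\ref{lem:ZH=G} and Proposition~\ref{prop:Heisenberg}), applies the induction formula $\Sw(\rho)=\Sw(\tilde{\chi})+v_{F}(\mfd_{E^{\tilde{X}}/F})-n+1$, and then invokes Buhler's lower bound $\Sw(\tilde{\chi})\ge\alpha(E^{N}/E^{\tilde{X}})+\beta(E^{\tilde{X}}/E^{M})$ together with the formula of \cite[Lemme 4.4]{Hen80} for $v_{F}(\mfd_{E^{\tilde{X}}/F})$; the extra $\alpha'$ beyond $n\alpha'$ comes from the Herbrand estimate $\alpha(E^{N}/E^{\tilde{X}})=\psi_{E^{\tilde{X}}/F}(\alpha')\ge\alpha'$. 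In your language, this is exactly a lower bound for the Swan exponent of a character of your Lagrangian-type subgroup, which Hasse--Arf alone cannot supply. If you wish to salvage your approach, that is the step to quantify.
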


\begin{proof}
The same proof as that of \cite[Th\'eor\`eme 1.8]{Hen80} works in the present setting.
For the sake of completeness, let us reproduce it here.

By Proposition \ref{prop:Heisenberg}, we have $\rho|_{H}\cong\Ind_{X}^{H}(\chi)$, where $X$ is a subgroup of $H$ such that $X$ contains $Z$ and $X/Z$ is a maximal totally isotropic subspace of $H/Z$ and $\chi$ is a character of $X$.
Let us put $\tilde{X}:=Z(G)X$ and $\tilde{Y}:=Z(G)Y$.
Then, since we have $G=Z(G)H$ by Lemma \ref{lem:ZH=G}, there exists an extension of $\tilde{\chi}$ of $\chi$ to $\tilde{X}$ satisfying $\rho\cong\Ind_{\tilde{X}}^{G}(\tilde{\chi})$.
By the induction formula for Swan exponents (Section \ref{sssec:Induction}; note that $[G\colon\tilde{X}]=n$), we have
\begin{align}\label{eq:Hen80-1}
\Sw(\rho)
=\Sw(\tilde{\chi})+v_{F}(\mfd_{E^{\tilde{X}}/F})-n+1,
\end{align}
where $E^{G'}$ denotes the fixed field of $G'$ in $E$ for any subgroup $G'$ of $G$.
\[
\xymatrix@R=10pt{
G\ar@{}[r]|{\supset} & H=G^{\alpha'} \\
\tilde{X}\ar@{}[r]|{\supset}\ar@{}[u]|{\cup} & X\ar@{}[u]|{\cup} \\
\tilde{Y} \ar@{}[r]|{\supset}\ar@{}[u]|{\cup}&Y \ar@{}[u]|{\cup}\\
Z(G) \ar@{}[r]|{\supset}\ar@{}[u]|{\cup}&Z=G^{\alpha} \ar@{}[u]|{\cup}
}
\]

To compute the right-hand side of \eqref{eq:Hen80-1}, we introduce a few notations.
For a finite Galois extension $L/K$ of non-Archimedean local fields, we put
\[
\alpha(L/K):=\inf\{u \mid \Gal(L/K)^{u}=\{1\}\},
\]
\[
\beta(L/K):=\inf\{v \mid \Gal(L/K)_{v}=\{1\}\}.
\]
Note that we have
\[
\varphi_{L/K}(\beta(L/K))=\alpha(L/K)
\quad\text{and}\quad
\psi_{L/K}(\alpha(L/K))=\beta(L/K),
\]
where $\varphi_{L/K}$ and $\psi_{L/K}$ are the Herbrand functions.
We remark that, in the case where $L/K$ is ramified, we have $\alpha(L/K)=\sup\{u \mid \Gal(L/K)^{u}\neq\{1\}\}$ and $\beta(L/K)=\sup\{v \mid \Gal(L/K)_{v}\neq\{1\}\}$ and these numbers are non-negative.

We fix a non-trivial element $s\in G/\tilde{X}\cong\Gal(E^{\tilde{X}}/F)$ and let $M\subset G$ be the preimage of $\langle s\rangle\subset G/\tilde{X}$ in $G$ and $N\subset \tilde{X}$ be the kernel of the character $\tilde{\chi}^{s-1}$.
Thus we have the chain 
\[
G \supset M \supset \tilde{X} \supset N \supset \tilde{Y}
\]
whose indices are $[G:M]=n/2$, $[M:\tilde{X}]=2$, $[\tilde{X}:N]=2$, and $[N:\tilde{Y}]=n/2$.
Then, by a result of Buhler (\cite[Proposition 3, page 31]{Buh78}), we have
\[
\Sw(\tilde{\chi})
\geq
\alpha(E^{N}/E^{\tilde{X}})+\beta(E^{\tilde{X}}/E^{M}).
\]
On the other hand, by \cite[Lemme 4.4]{Hen80}, we have
\[
v_{F}(\mfd_{E^{\tilde{X}}/F})
=
n(\alpha(E^{\tilde{X}}/F)+1)-\beta(E^{\tilde{X}}/F)-1.
\]
Hence, by \eqref{eq:Hen80-1}, we have
\begin{align}\label{ineq:Hen80-2}
\Sw(\rho)
\geq
\alpha(E^{N}/E^{\tilde{X}})+\beta(E^{\tilde{X}}/E^{M})+n\alpha(E^{\tilde{X}}/F)-\beta(E^{\tilde{X}}/F).
\end{align}

As the numbering of the lower ramification filtration is consistent with that for any subgroups (\cite[Chapter IV, Proposition 2]{Ser79}), we have
\[
\beta(E^{\tilde{X}}/E^{M})=\beta(E^{\tilde{X}}/F)
\]
(note that there is no jump between $H$ and $Z$ by definition).
Also, we have
\begin{align*}
\alpha(E^{N}/E^{\tilde{X}})
&=\varphi_{E^{N}/E^{\tilde{X}}}(\beta(E^{N}/E^{\tilde{X}}))\\
&=\varphi_{E^{N}/E^{\tilde{X}}}(\beta(E^{N}/F))\\
&=\varphi_{E^{N}/E^{\tilde{X}}}\circ\psi_{E^{N}/F}(\alpha(E^{N}/F))\\
&=\varphi_{E^{N}/E^{\tilde{X}}}\circ\psi_{E^{N}/E^{\tilde{X}}}\circ\psi_{E^{\tilde{X}}/F}(\alpha(E^{N}/F))
=\psi_{E^{\tilde{X}}/F}(\alpha(E^{N}/F)).
\end{align*}
By noting that the slope of $\psi_{E^{\tilde{X}}/F}$ is greater than or equal to $1$ and that $\alpha(E^{N}/F)=\alpha'$ (the upper numbering is consistent with taking a quotient of the Galois group; \cite[Chapter IV, Proposition 14]{Ser79}), we get $\alpha(E^{N}/E^{\tilde{X}})\geq \alpha'$.
Thus, finally noting that $\alpha(E^{\tilde{X}}/F)=\alpha'$, the above inequality \eqref{ineq:Hen80-2} implies that
\[
\Sw(\rho)
\geq
(n+1)\alpha'.
\]
Since we have $\Sw(\rho)=n\alpha$ (see Section \ref{sssec:upper}), this is nothing but the claimed inequality.
\end{proof}

\subsection{Approach via induction on the order of $G$}\label{subsec:induction-order}

We next investigate the conjectural inequality from the viewpoint of the induction on the order of $G$.

\begin{prop}\label{prop:orth-strict-inequality}
Suppose that Conjecture \ref{conj:Henniart} is true.
Then the inequality \eqref{ineq:Henniart} is strict when $\rho$ is irreducible orthogonal and non-trivial on $G_{1}$.
\end{prop}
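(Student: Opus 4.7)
The plan is to argue by contradiction and combine induction on $|G|$ with the structural analysis developed in Subsection \ref{subsec:approach-ramif-group}. After reducing to $G=G_{1}$ via Section \ref{sssec:tame}, I would proceed by induction on $|G|$. If $\rho$ is not faithful, it descends to a faithful orthogonal representation of $G/\ker(\rho)$, and the slopes of the irreducible constituents of $\rho$, $\Sym^{2}\rho$ and $\wedge^{2}\rho$ are preserved under this descent (via the compatibility of the upper ramification filtration with quotients), so the inductive strict inequality on the smaller group transfers back to $\rho$. Hence I may assume $\rho$ is faithful.

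Since $G$ is a $2$-group, $\rho$ has dimension $n$ a power of $2$. The cases $n=1,2$ are immediate: when $n=1$, orthogonality forces $\rho^{2}=\mathbf{1}$ so $\Sw(\Sym^{2}\rho)=0$ while $\Sw(\rho)>0$ by non-triviality on $G_{1}$; when $n=2$, Theorem \ref{thm:Henniart2-dim2} already furnishes the strict inequality for orthogonal $\rho$. For $n>2$, I would introduce the subgroups $Z\subset H\subset Y$ and $I=H\cap Z(G)$ as in Subsection \ref{subsec:approach-ramif-group}. If $I$ has order $4$, Proposition \ref{prop:I-order-4} yields strict inequality without any orthogonality assumption. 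If $I=Z$ and $\rho|_{H}$ is irreducible, Proposition \ref{prop:I-equal-Z-and-H-irr} case (1a) applies precisely to orthogonal $\rho$ and produces strictness. In either situation the assumed equality contradicts Conjecture \ref{conj:Henniart}.

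The remaining case is $I=Z$ with $\rho|_{H}$ reducible. Since $\rho$ is a faithful irreducible representation of a $2$-group, it is monomial; the reducibility of $\rho|_{H}$ forces $\rho=\Ind_{G'}^{G}\rho'$ for some proper intermediate subgroup $H\subsetneq G'\subsetneq G$ and irreducible $\rho'$ of $G'$ with $\dim\rho'<n$. Self-duality of $\rho$ translates via Mackey into a pairing between $\rho'$ and some $G/G'$-conjugate. In the subcase where $\rho'$ is itself orthogonal, the inductive hypothesis on the smaller group $G'$ gives strict inequality for $\rho'$, and the $\Sym^{2}/\wedge^{2}$-induction formulas of Subsection \ref{subsec:cyclic} together with Lemma \ref{lem:Ind-Swan} transfer strictness to $\rho$. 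In the subcases where $\rho'$ is symplectic or not self-dual, one applies the hypothesized Conjecture \ref{conj:Henniart} to each Mackey constituent and extracts the strict gap from the extra $\mathbf{1}$-summand in $\Sym^{2}\rho$ forced by orthogonality of the induced representation.

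The hardest part will be the Mackey analysis when $\rho'$ is not orthogonal: the inductive hypothesis on orthogonal representations does not apply to $\rho'$ directly, and one must extract strictness purely from how the invariant symmetric bilinear form on $\rho$ interacts with the pairing of $\rho'$ against its conjugates. Quantifying the strict improvement contributed by this extra $\mathbf{1}$-summand in the Mackey decomposition, beyond the weak inequality it automatically provides via the conjecture, is the technical heart of the argument.
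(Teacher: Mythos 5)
Your proposal is not yet a proof: the case you yourself identify as ``the technical heart'' --- $I=Z$, $\rho|_{H}$ reducible, with the inducing datum not orthogonal --- is exactly the case that Propositions \ref{prop:I-order-4} and \ref{prop:I-equal-Z-and-H-irr} do not cover, and you give no argument for it, so the route through Section \ref{subsec:approach-ramif-group} stops short of the statement. Two of the steps feeding into that case are also unjustified. First, reducibility of $\rho|_{H}$ for the normal subgroup $H$ does not force $\rho\cong\Ind_{G'}^{G}\rho'$ with $H\subsetneq G'\subsetneq G$: Clifford theory only gives induction from the inertia group of a constituent of $\rho|_{H}$, and when $\rho|_{H}$ is isotypic with multiplicity that inertia group is all of $G$, so no such intermediate $G'$ need exist (monomiality gives an inducing subgroup, but not one containing $H$). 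Second, even in your subcase where $\rho'$ is orthogonal, ``transferring strictness'' through induction is not automatic: by Proposition \ref{prop:Ind-Sym-Ext} and Lemma \ref{lem:Ind-Swan}, the difference $\Sw(\Sym^{2}\rho)-\Sw(\wedge^{2}\rho)$ acquires the extra term $\Sw(\tau\otimes\omega)-\Sw(\tau)$, which Section \ref{subsec:induction-dimension} explains cannot be controlled in general; a strict inequality for $\rho'$ therefore does not by itself yield a strict inequality for $\rho$.

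The paper's proof avoids all of this by comparing $G$ with a subgroup via \emph{restriction} rather than realizing $\rho$ by induction. With $G=G_{1}$ and $a$ the last index with $G_{a}=G$, choose an index-two subgroup of $G$ containing $G_{a+1}$; its lower ramification filtration agrees with that of $G$ beyond level $a$. Writing the desired strict inequality as positivity of $\sum_{i=1}^{m}\frac{g_i}{g}\bigl(\dim((\Sym^{2}V)^{G_{i}})-\dim((\wedge^{2}V)^{G_{i}})\bigr)$, and likewise for the subgroup, the comparison collapses to the single top-level term: one needs $\dim((\Sym^{2}V)^{G})-\dim((\wedge^{2}V)^{G})=1$ to dominate one half of the corresponding quantity for the subgroup. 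The restriction of $\rho$ is either irreducible orthogonal, a sum of two orthogonal constituents (both handled by the induction hypothesis on $|G|$), or a sum of two non-self-dual constituents, and only in this last case is Conjecture \ref{conj:Henniart} invoked, where the subgroup-level invariants vanish and strictness is immediate. No monomiality, Mackey analysis, or the $Z$, $Y$, $I$ apparatus enters. If you want to salvage your outline, the missing idea is precisely this: pass to a well-chosen index-two subgroup by restriction, so that the only data to compare are $G$-invariants versus subgroup-invariants in $\Sym^{2}V$ and $\wedge^{2}V$, rather than trying to push strict inequalities up through induced representations, where the term $\Sw(\tau\otimes\omega)-\Sw(\tau)$ blocks the argument.
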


\begin{proof}
We prove the statement by induction on the order of $G$.
Let $\rho$ be an irreducible orthogonal representation of $G$ on $V$ and assume that $G=G_{1}$ as usual.
We write $g:=|G|$ and $g_{i}:=|G_{i}|$.
Let $m\in\Z_{>0}$ be the integer such that $G_{m}\supsetneq G_{m+1}=\{1\}$.
By the definition of the Swan exponents, what we want to prove \eqref{ineq:Henniart} is that
\[
\sum_{i\geq1}\frac{g_i}{g}\bigl(\dim(\Sym^{2}V/(\Sym^{2}V)^{G_i})-\dim(\wedge^{2}V/(\wedge^{2}V)^{G_{i}})\bigr)
<
\sum_{i\geq1}\frac{g_i}{g}\dim(V/V^{G_{i}}).
\]
Since $\rho$ is irreducible, hence $V^{G_{i}}=0$ for any $i\leq m$, the above inequality is equivalent to 
\begin{align}\label{ineq:strict-orth}
\sum_{i=1}^{m}\frac{g_i}{g}\bigl(\dim((\Sym^{2}V)^{G_{i}})-\dim((\wedge^{2}V)^{G_{i}})\bigr)
>0.
\end{align}

Now let $a\in\Z_{>0}$ be the integer such that $G_{1}=\cdots=G_{a}\supsetneq G_{a+1}$.
Suppose that $H$ is a subgroup of $G$ containing $G_{a+1}$.
Let $h:=|H|$ and $h_{i}:=|H_{i}|$.
As the numbering of the lower ramification subgroups of $G$ is consistent with any subgroup of $G$ (\cite[Chapter IV, Proposition 2]{Ser79}), we have
\[
H=H_{1}=\cdots=H_{a}\supseteq H_{a+1} \,(=G_{a+1}) \supseteq\cdots\supseteq H_{m}\,(=G_{m})\supsetneq H_{m+1}=\{1\}.
\]
Let us take $H$ so that $[G:H]=2$.
In this case, there are three possibilities:
\begin{enumerate}
\item
$\rho|_{H}$ is irreducible, hence orthogonal;
\item
$\rho|_{H}$ has two irreducible constituents, both of which are orthogonal;
\item
$\rho|_{H}$ has two irreducible constituents, both of which are not self-dual.
\end{enumerate}

We first consider the cases (1) and (2).
We apply the induction hypothesis to $\rho|_{H}$ in the case (1) and to each irreducible constituent of $\rho|_{H}$ and then summing up them in the case (2).
Then, by the same reasoning as in the first paragraph, we obtain
\begin{align}\label{ineq:ineq-for-H}
\sum_{i=1}^{m}\frac{h_i}{h}\bigl(\dim((\Sym^{2}V)^{H_{i}})-\dim((\wedge^{2}V)^{H_{i}})\bigr)
>0
\end{align}
After multiplied by $h/g$, the summands of the left-hand side of \eqref{ineq:ineq-for-H} for $i>a$ are identical to those of \eqref{ineq:strict-orth}.
Thus, by noting that $G=G_{1}=\cdots=G_{a}$ and $H=H_{1}=\cdots=H_{a}$, it is enough to show that
\begin{align}\label{ineq:induction-argument}
\dim((\Sym^{2}V)^{G})-\dim((\wedge^{2}V)^{G})
\geq
\frac{h}{g}\bigl(\dim((\Sym^{2}V)^{H})-\dim((\wedge^{2}V)^{H})\bigr).
\end{align}
As $\rho$ is irreducible orthogonal, we have $\dim((\Sym^{2}V)^{G})=1$ and $\dim((\wedge^{2}V)^{G})=0$.
In the case (1), as $\rho|_{H}$ is irreducible orthogonal, we have $\dim((\Sym^{2}V)^{H})=1$ and $\dim((\wedge^{2}V)^{H})=0$.
In the case (2), as $\rho|_{H}$ has two irreducible orthogonal constituents, we have $\dim((\Sym^{2}V)^{H})=2$ and $\dim((\wedge^{2}V)^{H})=0$.
Since $h/g=1/2$, we get the assertion in both cases.

We next consider the case (3).
In this case, by applying Conjecture \ref{conj:Henniart} to each irreducible constituent of $\rho|_{H}$ and then summing up them, we obtain
\[
\sum_{i=1}^{m}\frac{h_i}{h}\bigl(\dim((\Sym^{2}V)^{H_{i}})-\dim((\wedge^{2}V)^{H_{i}})\bigr)
\geq0
\]
by the same reasoning as in the first paragraph.
Thus, by the same discussion as in the cases (1) and (2), it suffices to show that the inequality \eqref{ineq:induction-argument} holds without equality.
This follows from that $\dim((\Sym^{2}V)^{H})=0$ and $\dim((\wedge^{2}V)^{H})=0$ since two irreducible constituents of $\rho|_{H}$ are not self-dual.
\end{proof}

Thus it is also reasonable to expect the following.

\begin{conj}\label{conj:Henniart-ortho}
Assume $p=2$.
For any irreducible orthogonal Galois representation $\rho$ of $G$ which is not tame, we have
\[
\Sw(\Sym^{2}\rho)-\Sw(\wedge^{2}\rho)
<
\Sw(\rho).
\]
\end{conj}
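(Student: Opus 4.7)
The plan is to establish Conjecture \ref{conj:Henniart-ortho} by combining the inductive framework of Proposition \ref{prop:orth-strict-inequality} with a direct attack on Conjecture \ref{conj:Henniart} itself, which by that proposition already entails the orthogonal strict inequality. After the standard reductions (replacing $F$ by its maximal tame subextension in $E$ via Section \ref{sssec:tame} to assume $G = G_{1}$, using additivity in $\rho$ to assume $\rho$ irreducible, and passing to the quotient by $\Ker\rho$ to assume $\rho$ faithful), one has $G$ a non-abelian $2$-group and $\rho$ an irreducible orthogonal representation of dimension $n = 2^{k}$ with $k \geq 1$. The base case $k = 1$ follows from Theorem \ref{thm:Henniart2-dim2}, which gives the strict inequality precisely when the $2$-dimensional $\rho$ is not symplectic; an orthogonal $2$-dimensional $\rho$ is self-dual but not symplectic, so strict inequality holds.

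For the inductive step the cleanest approach is induction on $|G|$, targeting Conjecture \ref{conj:Henniart}. Following Section \ref{subsec:induction-order}, one picks an index-$2$ normal subgroup $H \supseteq G_{a+1}$, where $a$ is maximal with $G_{1} = \cdots = G_{a}$; such $H$ always exists because $G/G_{a+1}$ is a non-trivial abelian $2$-group. By Clifford theory, either $\rho|_{H}$ stays irreducible or $\rho \cong \Ind_{H}^{G}\sigma$ for some irreducible $\sigma$ on $H$. In the induced case, one expresses $\Sym^{2}\rho$ and $\wedge^{2}\rho$ in terms of inductions and restrictions of $\sigma$, applies the inductive hypothesis to $\sigma$ (and its twist by the non-trivial coset of $G/H$), and reassembles the Swan exponents using Lemma \ref{lem:Ind-Swan}; the orthogonality of $\rho$ descends to a compatibility constraint on $\sigma$ which rules out the extremal case and gives strictness.

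The main obstacle is the case where $\rho|_{H}$ remains irreducible, since restriction to $H$ does not decrease the dimension and the induction breaks down. Here one must exploit the finer structure of the last ramification subgroups $Z \subsetneq H_{0} \subseteq G$ analysed in Section \ref{subsec:approach-ramif-group}. The orthogonality of $\rho$ is a strong additional constraint: the invariant symmetric bilinear form on $V$ forces $\rho$ to factor through the orthogonal group, and at the level of the Heisenberg-like structure on $H_{0}/Z$ this should constrain the central character of $\rho|_{H_{0}}$ and hence the slopes of the irreducible constituents of $\Sym^{2}\rho$ and $\wedge^{2}\rho$. Concretely, one would generalize Propositions \ref{prop:I-order-4} and \ref{prop:I-equal-Z-and-H-irr} to arbitrary orthogonal $\rho$: the orthogonal pairing must produce at least one extra invariant vector in $(\Sym^{2}\rho)^{G_{i}}$ over $(\wedge^{2}\rho)^{G_{i}}$ at every jump of the ramification filtration up to the final one at which $\rho$ becomes trivial, and summing these contributions via the formula in Section \ref{sssec:upper} should yield $\Sw(\Sym^{2}\rho) - \Sw(\wedge^{2}\rho) < \Sw(\rho)$.

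The hardest part, as foreshadowed by the remark that ``the stronger conjecture necessarily uses strong properties of the filtration by ramification subgroups,'' will be to quantify exactly how the orthogonal structure interacts with the successive jumps of the filtration; it is not enough to control the top step alone, and one must account for the possibility that invariants of $\Sym^{2}\rho$ and $\wedge^{2}\rho$ accumulate at different rates as one descends through $G_{1} \supseteq G_{2} \supseteq \cdots$. A resolution likely requires a uniform categorical statement, perhaps in the style of a Künneth-type decomposition of the Swan representation under the involution $v \otimes w \mapsto w \otimes v$, compatible with the orthogonal duality on $\rho$.
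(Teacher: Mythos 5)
The statement you are addressing is a \emph{conjecture} in the paper: the authors do not prove it. What the paper establishes is only the conditional result (Proposition \ref{prop:orth-strict-inequality}) that Conjecture \ref{conj:Henniart} implies the strict inequality for irreducible orthogonal $\rho$, together with special cases (dimension $2$ via Theorem \ref{thm:Henniart2-dim2}, cyclic inductions, and the ramification-theoretic Propositions \ref{prop:I-order-4} and \ref{prop:I-equal-Z-and-H-irr}). Your proposal is likewise not a proof but a program, and its two key steps have genuine gaps. First, in the induced case $\rho\cong\Ind_{H}^{G}\sigma$, your claim that ``the orthogonality of $\rho$ descends to a compatibility constraint on $\sigma$ which rules out the extremal case and gives strictness'' is unsubstantiated: Clifford theory only gives that either $\sigma$ is self-dual or $\sigma^{\gamma}\cong\sigma^{\vee}$, and in the latter (non-self-dual) case the paper's own analysis in Section \ref{subsec:induction-dimension} shows that $\tau$ contains the trivial character, so $\Sw(\tau\otimes\omega)\neq\Sw(\tau)$ and even the full Conjecture \ref{conj:Henniart} applied to $\sigma$ does not suffice; one would need a strictly stronger inequality for $\sigma$, which is not available. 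In the order-induction scheme of Section \ref{subsec:induction-order} the analogous obstruction is the case of symplectic constituents of $\rho|_{H}$, which the paper explicitly identifies as the obstacle; your sketch does not address it.

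Second, in the case where $\rho|_{H}$ stays irreducible, you offer no argument, only the expectation that ``the orthogonal pairing must produce at least one extra invariant vector in $(\Sym^{2}\rho)^{G_{i}}$ over $(\wedge^{2}\rho)^{G_{i}}$ at every jump.'' This is false as stated: orthogonality of $\rho$ guarantees $\dim(\Sym^{2}V)^{G}=1$ and $\dim(\wedge^{2}V)^{G}=0$ for the whole group, but for a proper ramification subgroup $G_{i}$ the restriction $\rho|_{G_{i}}$ may decompose with symplectic or non-self-dual constituents, and then $\dim(\wedge^{2}V)^{G_{i}})$ can exceed $\dim((\Sym^{2}V)^{G_{i}})$, so the term-by-term positivity you need in the reformulation \eqref{ineq:strict-orth} can fail at intermediate jumps. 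Controlling how these invariants accumulate along the filtration is precisely the open difficulty (the paper's Propositions \ref{prop:I-order-4} and \ref{prop:I-equal-Z-and-H-irr} handle only very particular configurations of the last two ramification subgroups), and your closing appeal to a ``uniform categorical statement'' is an acknowledgement of the gap rather than a way to close it. In short, the proposal reproduces the paper's conditional and partial results but does not supply the missing ingredient that would turn the conjecture into a theorem.
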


The point here is that it could be possible to approach Conjecture \ref{conj:Henniart} by induction on the order of $G$ as performed in the proof of Proposition \ref{prop:orth-strict-inequality}.
At least, by a similar consideration to the above proof, we can conclude the following:
\begin{enumerate}
\item
If $\rho$ is orthogonal, then the inequality for $H$ implies the inequality for $G$, and even a strict one unless $\rho$ is induced from an orthogonal representation of $H$.
\item
If $\rho$ is not self-dual, then the inequality for $H$ implies the inequality for $G$ if $\rho|_{H}$ is not orthogonal.
\item
If $\rho$ is symplectic, then the inequality for $H$ implies the inequality for $G$ only if $\rho$ is induced from a symplectic representation of $H$.
\end{enumerate}

From this observation, we see that the obstacle for making the induction argument (on the order of $G$) work is the case where $\rho$ is symplectic.

\subsection{Approach via induction on the dimension of $\rho$}\label{subsec:induction-dimension}

Let us finish this section by giving some comments on the approach via the induction on the dimension of $\rho$.

Suppose that $\rho$ is irreducible, $G=G_{1}$, and $d:=\dim(\rho)>2$.
Again by the fact that any irreducible representation of a $p$-group is monomial, we may write $\rho=\Ind_{H}^{G}(\sigma)$, where $H$ is a subgroup of $G$ of index 2 and $\sigma$ is an irreducible representation of $H$.
Let $F'$ be the quadratic extension of $F$ corresponding to $H$ and $\omega$ be the quadratic character of $F^{\times}$ defining $F'/F$.
By Lemma \ref{lem:Ind-Swan} and Section \ref{sssec:upper}, we have
\[
\Sw(\rho)=\Sw(\sigma)+sd/2,
\]
where $s=\Sw(\omega)$.
On the other hand, by Proposition \ref{prop:Ind-Sym-Ext}, we have 
\[
\wedge^2\rho\cong\Ind_{H}^{G}(\wedge^{2}\sigma)\oplus\tau\otimes\omega
\quad\text{and}\quad
\Sym^2\rho\cong\Ind_{H}^{G}(\Sym^{2}\sigma)\oplus\tau,
\]
where $\tau$ is an extension of the representation $\sigma\otimes\sigma^{\gamma}$ of $H$ ($\gamma\in G\smallsetminus H$) to $G$ (see Proposition \ref{prop:Ind-Sym-Ext}).
Hence, again by Lemma \ref{lem:Ind-Swan}, we get
\[
\Sw(\Sym^2\rho)-\Sw(\wedge^2\rho)=\Sw(\Sym^{2}\sigma)-\Sw(\wedge^{2}\sigma)+sd/2+\Sw(\tau\otimes\omega)-\Sw(\tau).
\]
Therefore, to get the conjectural inequality, it suffices to show that
\[
\Sw(\Sym^{2}\sigma)-\Sw(\wedge^{2}\sigma)+\Sw(\tau\otimes\omega)-\Sw(\tau)
\leq
\Sw(\sigma).
\]

A favourable situation is when $\Sw(\tau\otimes\omega)=\Sw(\tau)$, because then the inequality for $\sigma$ implies the inequality for $\rho$, and similarly for the strict inequality.
This is the case if, for example, the slope of any irreducible constituent of $\tau$ is greater than $s$ by Lemma \ref{lem:Swan-twist}.
But we cannot insure this property in general.
Moreover, when $\sigma$ is not self-dual, hence $\sigma^\gamma$ is the contragredient of $\sigma$, $\sigma\otimes\sigma^\gamma$ contains the trivial character of $H$.
Thus $\tau$ contains at least one irreducible constituent whose slope is $0$ (i.e., the trivial character of $G$), hence smaller than $s$.
Therefore the equality $\Sw(\tau\otimes\omega)=\Sw(\tau)$ never holds in this case.
This means that we should have some stronger inequality for $\sigma$ to conclude the conjectural inequality for $\rho$ if $\sigma$ is not self-dual.

\section{Application to the local Langlands correspondence}\label{sec:LLC}

\subsection{Local Langlands correspondence}\label{subsec:LLC}
From now on, we assume that the characteristic of a non-Archimedean local field $F$ is zero.
We write $W_{F}$ for the Weil group of $F$.
Let $\G$ be a split connected reductive group over $F$.
Let $\hat{\G}$ denote the Langlands dual group of $\G$.
We say that a homomorphism $\phi\colon W_{F}\times\SL_{2}(\C)\rightarrow \hat{\G}$ is an \textit{$L$-parameter} of $\G$ if $\phi$ is smooth on $W_{F}$ and the restriction $\phi|_{\SL_{2}(\C)}\colon\SL_{2}(\C)\rightarrow\hat{\G}$ is algebraic.
We let $\Pi(\G)$ be the set of equivalence classes of irreducible smooth representations of $\G(F)$ and $\Phi(\G)$ the set of $\hat{\G}$-conjugacy classes of  $L$-parameters of $\G$.

The conjectural \textit{local Langlands correspondence} asserts that there exists a natural map
\[
\LLC_{\G}\colon\Pi(\G)\rightarrow\Phi(\G),
\]
with finite fibers.
In other words, by letting $\Pi_{\phi}^{\G}$ be the fiber of the map $\LLC_{\G}$ at an $L$-parameter $\phi$ (called an \textit{$L$-packet}), we have a natural partition
\[
\Pi(\G)
=
\bigsqcup_{\phi\in\Phi(\G)}\Pi_{\phi}^{\G},
\]
where each $\Pi_{\phi}^{\G}$ is finite.
Moreover, it is expected that each $\Pi_{\phi}^{\G}$ is equipped with a natural map $\Pi_{\phi}^{\G}\rightarrow\Irr(\mcS_{\phi})$, where $\Irr(\mcS_{\phi})$ is the set of irreducible characters of a certain finite group $\mcS_{\phi}$ associated to $\phi$ (see \cite[Section 1]{HII08} and also \cite[Section 6]{Art89} for the details).

The local Langlands correspondence has been established for several specific groups.
Especially, when $\G$ is $\GL_{N}$, the correspondence was constructed by Harris--Taylor \cite{HT01} and the first author \cite{Hen00}.
Also, for a certain class of classical groups, the correspondence has been established; for example, quasi-split symplectic and orthogonal groups by Arthur \cite{Art13}.
Our motivation is to seek an explicit description of the local Langlands correspondence for these groups.

\subsection{Hiraga--Ichino--Ikeda's formal degree conjecture}\label{subsec:FDC}
We next recall the \textit{formal degree conjecture} proposed by Hiraga--Ichino--Ikeda (\cite{HII08}).

\begin{conj}[Formal degree conjecture, {\cite[Conjecture 1.4]{HII08}}]\label{conj:FDC}
We assume the local Langlands correspondence for $\G$.
Suppose that $\pi\in\Pi(\G)$ is a discrete series representation whose $L$-parameter is $\phi$.
Then we have the following identity:
\[
\deg(\pi)
=
\frac{\langle1,\pi\rangle}{|\mathcal{S}_{\phi}^{\natural}|}\cdot|\gamma(0,\Ad\circ\phi,\psi_{F})|.
\]
Here, 
\begin{itemize}
\item
$\deg(\pi)$ is the formal degree of $\pi$ (see \cite{HII08} and also \cite{HII08-correction} for the normalization of the Haar measure used here),
\item
$\langle-,\pi\rangle$ denotes the irreducible character of $\mcS_{\phi}$ associated to $\pi$ via the above-mentioned map $\Pi_{\phi}^{\G}\rightarrow\Irr(\mcS_{\phi})$,
\item
$\mathcal{S}_{\phi}^{\natural}$ is a variant of the group $\mathcal{S}_{\phi}$ (see \cite[Section 1]{HII08} for the definition), 
\item
$\gamma(-,\Ad\circ\phi,\psi_{F})|$ denotes the $\gamma$-factor of the representation $\Ad\circ\phi$ of $W_{F}\times\SL_{2}(\C)$ with respect to a non-trivial additive character $\psi_{F}$, where $\Ad$ is the adjoint representation of $\hat{\G}$ on $\Lie(\hat{\G})/\Lie(Z(\hat{\G}))$.
\end{itemize}
\end{conj}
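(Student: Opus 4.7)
A complete proof of Conjecture \ref{conj:FDC} in the generality stated is far out of reach; the plan below is rather the line of attack it prompts in the context of this paper, namely a case-by-case verification for the split classical groups $\G\in\{\Sp_{2n},\SO_{2n+1},\SO_{2n}\}$ that motivates the preceding sections. The strategy splits into three essentially independent computations, to be matched at the end.

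First, one would compute $\deg(\pi)$ explicitly for $\pi$ in a tractable class of discrete series, most naturally the simple supercuspidals of Gross--Reeder or, more broadly, the supercuspidals of Yu. Writing $\pi\cong\cInd_{J}^{\G(F)}\lambda$ reduces this to $\deg(\pi)=\dim(\lambda)/\vol(J/Z(\G)(F))$, a volume-and-dimension book-keeping inside the Bruhat--Tits building, while the component group $\mcS_{\phi}^{\nat}$ and the character $\langle1,\pi\rangle$ must be read off from an explicit parametrization of the relevant $L$-packet (for example, via the regular-supercuspidal machinery of Kaletha).

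Second, I would identify $\phi$ with a self-dual Galois representation $\rho:=\Std\circ\phi$ of the appropriate orthogonal or symplectic type and invoke the decomposition
\[
\Ad\circ\phi
\;\cong\;
\begin{cases}
\Sym^{2}\rho&\text{if $\G=\SO_{2n+1}$,}\\
\wedge^{2}\rho&\text{if $\G=\Sp_{2n}$ or $\SO_{2n}$,}
\end{cases}
\]
so that $|\gamma(0,\Ad\circ\phi,\psi_{F})|$ is an explicit power of the residue cardinality $q$ whose exponent is the Artin conductor $\Art(\Ad\circ\phi)$, itself expressible through $\Sw(\Sym^{2}\rho)$ or $\Sw(\wedge^{2}\rho)$. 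This is precisely where the paper's Swan-exponent results enter: Proposition \ref{prop:p-odd} settles the odd-residue case, while Theorem \ref{thm:p-2-weak-ineq} and, conditionally, Conjecture \ref{conj:Henniart} give what is needed when $p=2$.

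The third step is the comparison: using the Bushnell--Henniart--Kutzko formula for $\Sw(\rho\otimes\rho^{\vee})$ together with the identities of Section \ref{sec:main}, one rewrites $\Art(\Ad\circ\phi)$ as an arithmetic expression in the invariants of the simple stratum underlying $\rho$, and these same invariants govern $\vol(J/Z(\G)(F))$ via the Moy--Prasad volume formulas; with luck the two expressions match. The chief obstacle is the need for a sufficiently explicit local Langlands correspondence pinning down the pair $(\pi,\phi)$ together with the embedding $\phi\colon W_{F}\times\SL_{2}(\C)\to\hat{\G}$, not merely its $\GL_{N}(\C)$-conjugacy class, since the decomposition of $\Ad\circ\phi$ and the identification of $\mcS_{\phi}^{\nat}$ are genuinely $\hat{\G}$-level data. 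In residue characteristic $2$ there is the additional obstruction that the sharp value of $\Sw(\Sym^{2}\rho)-\Sw(\wedge^{2}\rho)$ is still open (Conjecture \ref{conj:Henniart}), which therefore has to be settled in parallel with, or as a prerequisite for, the dyadic case.
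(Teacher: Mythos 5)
There is nothing to compare here in the usual sense: the statement is a conjecture of Hiraga--Ichino--Ikeda, quoted in the paper purely as motivation, and the paper contains no proof of it. You correctly refrain from claiming one, and your programme is broadly the philosophy the paper follows in Section \ref{sec:LLC}: identify $\Std\circ\phi$ with a self-dual Galois representation $\rho$, use $\Ad\circ\phi\cong\Sym^{2}\rho$ (for $\SO_{2n+1}$) or $\wedge^{2}\rho$ (for $\Sp_{2n}$, $\SO_{2n}$), and convert the adjoint $\gamma$-factor into Swan/Artin exponent data to be matched against formal degrees. Where your sketch diverges from what the paper actually does: the paper never carries out the volume/Moy--Prasad side itself. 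For $p\neq 2$ it takes the explicit $L$-parameter of simple supercuspidals of $\Sp_{2n}$ from \cite{Oi18-ssc} as input, so that Corollary \ref{cor:FDC-ssc-p-odd} reduces to the single identity $\Sw(\wedge^{2}\rho)=n-1$, a special case of Corollary \ref{cor:p-odd}; and for $p=2$ the logic is run in reverse, using the formal degree identity (known for these groups by \cite{ILM17} and \cite{BP21-OWR}, so ``far out of reach'' overstates the status in the split classical case) together with $\Sw(\wedge^{2}\rho)=n$ to pin down the parameter, as in \cite{HO22} and \ref{sssec:FDC-LLC}.

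Two inaccuracies in your sketch are worth flagging. First, $|\gamma(0,\Ad\circ\phi,\psi_{F})|$ is not in general a pure power $q^{\frac{1}{2}\Art(\Ad\circ\phi)}$: the paper's identity \eqref{id:FDC} retains the ratio $|L(\Ad\circ\phi,1)|/|L(\Ad\circ\phi,0)|$, which is trivial only when $\Ad\circ\phi$ has no relevant inertia-invariants, so any general verification must control the $L$-factors as well. Second, the dyadic application does not hinge on Conjecture \ref{conj:Henniart}: for the $(2n+1)$-dimensional parameters that actually arise, the difference $\Sw(\Sym^{2}\rho)-\Sw(\wedge^{2}\rho)$ is computed exactly (Proposition \ref{prop:ssc-p-2} and Corollary \ref{cor:ssc-p-2}), so the open conjecture is not a prerequisite for the $p=2$ case treated in the paper, contrary to the final sentence of your proposal.
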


\subsection{Relation to Swan exponents}\label{subsec:FDC-Swan}
Now we explain why the above conjectures are related to the problem of our interest.
For this, let us first review the $\gamma$-factors for $L$-parameters.
For simplicity, we consider only the case where the $\SL_{2}(\C)$-part is trivial.
(See \cite[Section 2]{GR10} for the general case.)

We first consider the case of $\GL_{n}$.
In this case, an $L$-parameter with trivial $\SL_{2}(\C)$-part is nothing but an $n$-dimensional smooth semisimple representation of $W_{F}$.
Let $\phi$ be such a representation of $W_{F}$ on a $\C$-vector space $V$.
Then its $L$-factor is defined by
\[
L(\phi,s)
:=
\det(1-q^{-s}\cdot \phi(\Frob) \,\big\vert\, V^{\phi(I_{F})})^{-1},
\]
where $I_{F}$ denotes the inertia subgroup of $W_{F}$ and $\Frob$ is any lift of the geometric Frobenius.
Also, by fixing a non-trivial additive character $\psi_{F}$ of $F$, the $\varepsilon$-factor $\varepsilon(\phi,s,\psi_{F})$ is associated (Deligne \cite{Del73} and Tate \cite{Tat79}).
We do not recall the definition of the $\varepsilon$-factor, but note that it has the following relationship with the Artin exponent:
\[
\varepsilon(\phi,s,\psi_{F})
=
w\cdot q^{\Art(\phi)(\frac{1}{2}-s)},
\]
where $w$ is a complex number independent of $s$, which is a root of unity when $\rho$ is self-dual (see \cite[Section 2.3]{GR10}), and $\psi_{F}$ is taken to be of level zero, i.e., trivial on $\mcO_{F}$ but not on $\mfp_{F}^{-1}$.
Let us recall that the $\gamma$-factor is defined by using these local factors as follows:
\[
\gamma(\phi,s,\psi_{F})
:=
\varepsilon(\phi,s,\psi_{F})\cdot\frac{L(\phi^{\vee},1-s)}{L(\phi,s)},
\]
where $\phi^{\vee}$ is the contragredient representation of $\phi$.

We next consider the case where $\G$ is general.
In this case, by taking a finite-dimensional algebraic representation $R$ of $\hat{\G}$, we obtain an $L$-parameter $R\circ\phi$ of a general linear group.
Thus we can consider its $L$-factor, $\varepsilon$-factor, and $\gamma$-factor.
In the formal degree conjecture, $R$ is taken to be the adjoint representation of $\hat{\G}$.
Note that $\Ad$ is self-dual (consider the Killing form), hence so is $\Ad\circ\phi$.
For example, when $\G$ is one of $\GL_{N}$, $\SO_{N}$, and $\Sp_{2n}$, $\Ad\circ\phi$ is described as follows.
By composing $\phi$ with the standard representation of $\hat{\G}$, we may regard $\phi$ as a representation $\rho$ of $W_{F}$ (let us write $\rho$).
Then, $\Ad\circ\phi$ is isomorphic to $R\circ\rho$ as representations of $W_{F}$, where 
\[
R=
\begin{cases}
\Std\otimes\Std^{\vee}-\mathbbm{1}&\text{if $\G=\GL_{N}$,}\\
\Sym^{2}&\text{if $\G=\SO_{2n+1}$,}\\
\wedge^{2}&\text{if $\G=\Sp_{2n}$ or $\G=\SO_{2n}$.}
\end{cases}
\]

Now, by the above consideration, we can rewrite the identity predicted by the formal degree conjecture as follows:
\begin{align}\label{id:FDC}
\deg(\pi)
=
\frac{\langle1,\pi\rangle}{|\mathcal{S}_{\phi}^{\natural}|}\cdot q^{\frac{1}{2}\Art(\Ad\circ\phi)}\cdot\frac{|L(\Ad\circ\phi,1)|}{|L(\Ad\circ\phi,0)|}.
\end{align}
Our fundamental expectation is that establishing this identity is related to investigating the explicit local Langlands correspondence.
This is exactly the point where our motivation for computing the Swan exponent of $\Sym^{2}\rho$ or $\wedge^{2}\rho$ comes.

For example, let us consider the case where $\G$ is either $\SO_{N}$ or $\Sp_{2n}$.
In fact, Conjecture \ref{conj:FDC} in this case has been already solved (\cite{ILM17} for $\SO_{2n+1}$ and \cite{BP21-OWR} for $\SO_{2n}$ and $\Sp_{2n}$).
Suppose that an $L$-parameter $\phi$ is given ``explicitly'' in the sense that we have a description of the associated representation $\rho=\mathrm{std}\circ\phi$ of $W_{F}$.
Then, by computing the quantity on the right-hand side of \eqref{id:FDC}, we can access the formal degree of $\pi\in\Pi_{\phi}^{\G}$.
This enables us to narrow down the possibility of $\pi$.
Of course, here the roles of $\phi$ and $\pi$ can be swapped; we can also start from an explicitly given discrete series representation of $\G(F)$.

Conversely, we can also try to get the identity \eqref{id:FDC} by assuming the explicit local Langlands correspondence, i.e., assuming that we have an explicit description of an irreducible discrete series representation $\pi$ of $\G(F)$ and its $L$-parameter $\phi$.

\subsection{Some consequences for simple supercuspidal representations}\label{ssec:SSC}

Now we explain that the above strategy indeed works for \textit{simple supercuspidal representations}, in the sense of Gross--Reeder \cite{GR10}, of symplectic groups.

\subsubsection{From explicit LLC to FDC}\label{sssec:LLC-FDC}

\begin{cor}\label{cor:FDC-ssc-p-odd}
Simple supercuspidal representations of symplectic groups satisfy the formal degree conjecture when $p\neq2$.
\end{cor}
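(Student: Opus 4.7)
The plan is to verify the identity \eqref{id:FDC} for a simple supercuspidal $\pi$ of $\Sp_{2n}(F)$ directly, using an explicit description of its $L$-parameter $\phi$, with Corollary \ref{cor:p-odd} supplying the crucial Swan exponent.

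First I would recall the structure of the $L$-parameter. When $p\neq 2$, by the explicit local Langlands correspondence for simple supercuspidals of symplectic groups (Adrian--Kaplan, Adrian--Liu, Kaletha, and related work of the second author), the $(2n+1)$-dimensional representation $\rho:=\mathrm{std}\circ\phi$ of $W_F$ decomposes as $\rho\cong \mathbbm{1}\oplus\rho'$, with $\rho'$ irreducible self-dual orthogonal of dimension $2n$. (Such a decomposition is forced since, by Remark \ref{rem:AM16}(1), there are no irreducible self-dual $W_F$-representations of odd dimension $>1$ when $p$ is odd.) Moreover, the depth $1/(2n)$ of $\pi$ translates, via Section \ref{sssec:upper}, into $\slope(\rho')=1/(2n)$ and hence $\Sw(\rho')=1$.

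Next I would apply Corollary \ref{cor:p-odd}: since $\rho'$ is $2n$-dimensional irreducible self-dual with $\gcd(\Sw(\rho'),2n)=\gcd(1,2n)=1$, we obtain $\Sw(\wedge^{2}\rho')=n-1$. Using the identity
\[
\wedge^{2}\rho \;\cong\; \wedge^{2}(\mathbbm{1}\oplus\rho') \;\cong\; \rho'\oplus\wedge^{2}\rho',
\]
I conclude $\Sw(\wedge^{2}\rho)=1+(n-1)=n$. Combined with a direct computation of the inertia-invariant part $(\wedge^{2}\rho)^{I_F}$ (controlled by the tame intermediate extension through which $\rho'$ is induced from a character), this pins down $\Art(\wedge^{2}\rho)$ and the $L$-factors $L(\wedge^{2}\rho,s)$ at $s=0,1$ appearing on the right-hand side of \eqref{id:FDC}.

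It then remains to compute the left-hand side. The formal degree $\deg(\pi)$ is obtained from the standard compact-induction presentation of a simple supercuspidal (dimension of the inducing type divided by the volume of the compact mod center open subgroup). The factor $\langle 1,\pi\rangle/|\mcS_{\phi}^{\natural}|$ is read off from the component group attached to $\rho\cong\mathbbm{1}\oplus\rho'$, which is small and explicit. The whole identity \eqref{id:FDC} then reduces to a direct numerical check. The main obstacle is the bookkeeping: tracking measure normalizations, computing $(\wedge^{2}\rho)^{I_F}$ and the Frobenius action on it, and handling the component group and character $\langle 1,\pi\rangle$ correctly; none of these steps is conceptually difficult, but each requires care. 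The conceptual input that makes the verification succeed is precisely the value $\Sw(\wedge^{2}\rho)=n$ furnished by Corollary \ref{cor:p-odd}.
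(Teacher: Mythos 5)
Your proposal is correct and follows essentially the same route as the paper: the paper likewise invokes the explicit description of the $L$-parameter of a simple supercuspidal representation of $\Sp_{2n}(F)$ from \cite{Oi18-ssc} and reduces the formal degree conjecture (via \cite[Section 9]{Oi18-ssc}, so the measure/L-factor/component-group bookkeeping you sketch need not be redone) to the single identity $\Sw(\wedge^{2}\rho')=n-1$ for the irreducible $2n$-dimensional orthogonal constituent, which is exactly what Corollary \ref{cor:p-odd} supplies. The only other discrepancy is cosmetic: the one-dimensional summand of $\mathrm{std}\circ\phi$ is $\det\rho'$ rather than necessarily the trivial character, but since that character is tame when $p\neq2$ this does not affect your computation $\Sw(\wedge^{2}\rho)=n$.
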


\begin{proof}
We utilize the result of \cite{Oi18-ssc}, which describes the $L$-parameter of a simple supercuspidal representation of $\Sp_{2n}(F)$ under the assumption that $p\neq2$.
As discussed in \cite[Section 9]{Oi18-ssc}, it suffices to show that, for any irreducible orthogonal representation $\rho$ of $W_{F}$ satisfying $\dim(\rho)=2n$ and $\Sw(\rho)=1$, we have $\Sw(\wedge^{2}\rho)=n-1$.
This is a special case of Corollary \ref{cor:p-odd}.
\end{proof}

\begin{rem}
In \cite{Oi18-ssc}, the second author proved the equality $\Sw(\wedge^{2}\rho)=n-1$, hence obtained Corollary \ref{cor:FDC-ssc-p-odd}, in the case where $p\nmid 2n$ or $p=p^{e}\cdot n'$ for $e\in\Z_{>0}$ and $n'\in\Z_{>0}$ satisfying $n'|(p-1)$.
In \cite{Mie21}, Mieda obtained Corollary \ref{cor:FDC-ssc-p-odd} for any $p\neq 2$ via a completely different method.
On the other hand, as mentioned above, it was announced by Beuzart-Plessis that the formal degree conjecture for $\Sp_{2n}$ has been solved (see \cite{BP21-OWR}).
Hence Corollary \ref{cor:FDC-ssc-p-odd} is not new in any case.
But we would like to emphasize that our proof presented above is new.
\end{rem}

\subsubsection{From FDC to explicit LLC}\label{sssec:FDC-LLC}

We next consider the case where $p=2$.
In this case, we determined explicitly the $L$-parameter of a simple supercuspidal representation of $\Sp_{2n}(F)$ in \cite{HO22}.
However, we cannot deduce the formal degree conjecture for simple supercuspidal representations of $\Sp_{2n}(F)$ from the description in \textit{loc.\ cit.}\ as we did in Section \ref{sssec:LLC-FDC}.
Rather, in contrast to the case where $p\neq2$, we utilized the formal degree conjecture in order to determine the $L$-parameter.
Let us just present the outline of the arguments of \textit{loc.\ cit.}\ here.

The point is starting from an irreducible $(2n+1)$-dimensional orthogonal representation $\rho$ of $W_{F}$ whose Swan exponent is $1$.
Then it associates an irreducible supercuspidal representation $\pi$ of $\Sp_{2n}(F)$.
On the other hand, it also associates an irreducible self-dual supercuspidal representation $\Pi$ of $\GL_{2n+1}(F)$, which is known to be simple supercuspidal.
By utilizing the twisted endoscopic character relation between $\Pi$ and $\pi$, we can prove that $\pi$ is either depth-zero supercuspidal or simple supercuspidal (\cite[Corollary 4.5]{HO22}).
Since we know that $\Sw(\wedge^{2}\rho)=n$ (\cite[Proposition 4.12]{HO22}, or more generally, Corollary \ref{cor:ssc-p-2} of this paper), the formal degree conjecture tells us the value of the formal degree of $\pi$.
In fact, this information is enough for concluding that $\pi$ is not depth-zero, hence simple supercuspidal (see \cite[A.4]{Hen23}).
Once we see that $\pi$ is simple supercuspidal, it is not difficult to determine it exactly again by using the twisted endoscopic character relation.

\begin{rem}
Although we only mentioned the case of $\Sp_{2n}$, it is also possible to establish the explicit local Langlands correspondence for simple supercuspidal representations of split special orthogonal groups by utilizing the formal degree conjecture effectively.
That is the content of \cite{AHKO23}.
\end{rem}

\section{Simple supercuspidals $L$-parameters for $\Sp_{6}(\Q_2)$ have value in $G_2(\C)$}\label{sec:G2}

This section, a remark really, is prompted by a question of Gordan Savin to the authors.
After we had submitted our previous paper \cite{HO22} mentioned in Section \ref{sssec:FDC-LLC}, Savin asked us if the $L$-parameter of a simple supercuspidal representation of $\Sp_6(\Q_2)$, which takes values in $\SO_{7}(\C)$, actually can be conjugated to take values in the subgroup $G_2(\C)$ of $\SO_7(\C)$.

That question is related to the analysis of the parameter of simple supercuspidal representations of $G_2(\Q_2)$ in \cite[Section 4]{KLS10}.
In \textit{loc.\ cit.}, a subgroup $I$ of $G_2(\C)$ is introduced.
The group $I$ is a semi-direct product of a normal subgroup $J$, which is isomorphic to the additive $2$-group $\F_8$ (where $\F_8$ is a field with $8$ elements), with a subgroup which is itself a semi-direct product of $\F_8^{\times}$ and $\Gal(\F_8/\F_2)$ (acting naturally on $\F_8^{\times}$), with their natural actions on $\F_8$. 
\[
\xymatrix{
1\ar[r] & J \,(\cong \F_{8})\ar[r] &I \ar[r]&I/J \, (\cong \F_{8}^{\times}\rtimes\Gal(\F_{8}/\F_{2})) \ar[r]\ar@/^15pt/^-{\text{split}}[l] & 1
}
\]
It is shown that the inclusion of $I$ into $G_2(\C)$, itself inside $\GL_7(\C)$, gives the unique irreducible faithful self-dual (hence orthogonal) representation of $I$ of dimension $7$. 
It is also explicited in \textit{loc.\ cit.} how the group $I$ is a quotient of $W_{\Q_2}$.

Now take a simple supercuspidal representation $\pi$ of $\Sp_6(\Q_2)$, and let $\sigma$ be the corresponding representation of $W_{\Q_2}$, which is an orthogonal irreducible representation of dimension $7$.

\begin{prop}
The representation $\sigma$ is the same as the representation described after Proposition 4.2 in \textit{loc.\ cit.}
\end{prop}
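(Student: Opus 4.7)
The plan is to identify the image $\sigma(W_{\Q_2}) \subset \GL_7(\C)$ with the group $I$ of \cite{KLS10}, and then invoke uniqueness of the $7$-dimensional faithful self-dual irreducible representation of $I$.

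First, from Section~\ref{sssec:FDC-LLC} (based on \cite{HO22}) together with Corollary~\ref{cor:ssc-p-2}, the representation $\sigma$ is $7$-dimensional, irreducible, orthogonal, and satisfies $\Sw(\sigma) = 1$. Since $\gcd(1,7) = 1$, the classification of self-dual irreducible representations of odd dimension in \cite{BH11} yields $\sigma \cong \Ind_{L/\Q_2}(\chi)$ for some tame degree-$7$ extension $L/\Q_2$ and some quadratic character $\chi$ of $L^\times$. Applying the induction formula recalled in Section~\ref{sssec:Induction}, the unramified possibility $f_{L/\Q_2} = 7$ would force $\Sw(\sigma) = 7\cdot\Sw(\chi)$, which is impossible for an integer Swan exponent equal to $1$. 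Hence $L$ is the totally tamely ramified degree-$7$ extension $\Q_2(2^{1/7})$ (unique up to isomorphism since $\gcd(7, q_{\Q_2}-1) = 1$), and $\Sw(\chi) = 1$.

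Next I would compute the image $I' := \sigma(W_{\Q_2})$, a finite quotient of $W_{\Q_2}$ sitting inside $\GL_7(\C)$. The Galois closure $\widetilde{L}$ of $L/\Q_2$ is $L(\zeta_7)$, of degree $21$ (since the order of $2$ in $(\Z/7\Z)^{\times}$ is $3$), with Galois group the Frobenius group $\F_7 \rtimes \Z/3\Z$. The quadratic character $\chi$ cuts out a wildly ramified quadratic extension $M/L$; letting $\widetilde{M}$ be the Galois closure of $M$ over $\Q_2$, the normal subgroup $\Gal(\widetilde{M}/\widetilde{L})$ is an elementary abelian $2$-group on which $\Gal(\widetilde{L}/\Q_2)$ acts. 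Using local class field theory to track this action on the quadratic characters of $L^\times$ of Swan exponent $1$, one identifies $\Gal(\widetilde{M}/\widetilde{L})$ with the additive group $\F_8$ as an $\F_8^{\times}\rtimes\Gal(\F_8/\F_2)$-module, with $\F_8^{\times}$ acting by multiplication and $\Gal(\F_8/\F_2)$ by Frobenius. This matches the structure of $I$ in \cite[Section~4]{KLS10} and provides an isomorphism $I' \cong I$ compatibly with their respective embeddings into $\GL_7(\C)$.

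Finally, by \cite[Section~4]{KLS10} the group $I$ admits a unique $7$-dimensional irreducible faithful self-dual representation, and this representation factors through $G_2(\C) \subset \SO_7(\C)$. Since $\sigma$ descends to a $7$-dimensional irreducible faithful orthogonal representation of $I' \cong I$, it must coincide with the one described in \cite{KLS10}. The hard part will be the identification $I' \cong I$ in the previous step: a priori several non-isomorphic extensions of $\F_8^{\times}\rtimes\Gal(\F_8/\F_2)$ by $\F_8$ could arise, and pinning down the correct one requires the explicit form of $\chi$ obtained from the simple supercuspidal $L$-parameter computation in \cite{HO22}, together with rigidity properties of the $\F_8^{\times}\rtimes\Gal(\F_8/\F_2)$-module $\F_8$.
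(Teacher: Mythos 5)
Your proposal leaves the decisive step unproven, and you say so yourself: the identification of the image $I'=\sigma(W_{\Q_2})$ with the group $I$ of \cite{KLS10}, ``compatibly with their respective embeddings,'' is exactly where the content of the proposition lies, and deferring it to ``rigidity properties'' and ``the explicit form of $\chi$'' is not an argument. Worse, even if you had an abstract isomorphism $I'\cong I$, uniqueness of the $7$-dimensional faithful self-dual irreducible representation of $I$ would only tell you that $\sigma$ is that representation composed with \emph{some} surjection $W_{\Q_2}\twoheadrightarrow I$; to conclude that $\sigma$ is \emph{the} representation described after Proposition 4.2 of \cite{KLS10} you must also match the surjections, i.e.\ show that $\ker\sigma$ cuts out the same field extension as in \textit{loc.\ cit.} That comes down to pinning $\chi$ down exactly: with $K=\Q_2(2^{1/7})$ and $\varpi$ a $7$th root of $2$, the Swan-exponent-$1$ quadratic characters of $K^{\times}$ form two unramified-twist candidates, distinguished by $\chi(\varpi)=\pm1$, and they cut out different extensions (and only one has trivial determinant after induction). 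Your proposal contains no mechanism for making this choice.

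The paper's proof fills precisely this gap and then avoids the image computation altogether. It uses $\det(\sigma)=\delta_{K/\Q_2}\cdot(\chi|_{\Q_2^{\times}})$ with $\delta_{K/\Q_2}=\det(\Ind_{K/\Q_2}\mathbbm{1})$ unramified of value $(\tfrac{2}{7})=1$ at $2$ (by \cite{BF83}), so triviality of $\det\sigma$ forces $\chi|_{\Q_2^{\times}}$ trivial and hence $\chi(\varpi)=1$. With $\chi$ thus determined, one takes $\eta=\chi\circ N_{L/K}$ on the Galois closure $L$ and observes that $\Ind_{L/\Q_2}(\eta)$ is exactly the $21$-dimensional representation of \cite{KLS10}, whose unique self-dual $7$-dimensional constituent is therefore $\sigma$. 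If you want to salvage your image-theoretic route, you would have to carry out the determinant computation (or an equivalent determination of $\chi(\varpi)$) and then verify that the resulting quadratic extension of $K$, after taking Galois closure over $\Q_2$, is the field appearing in \cite{KLS10}; note also that the image analysis is what the paper reserves for the subsequent proposition over a general dyadic base, where the image is only pinned down up to being $I$ or its index-$3$ subgroup.
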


\begin{proof}
By \cite{HO22} (cf.\ \cite{BH11}), the representation $\sigma$ is given by $\Ind_{K/\Q_{2}}(\chi)$, where $K/\Q_{2}$ is the tame totally ramified extension of degree $7$ generated by a $7$th root $\varpi$ of $2$ (remark that $K$ is unique up to isomorphism) and $\chi$ is a quadratic character of $K^{\times}$ of Swan exponent $1$. 
Note that such $\chi$ must be as follows:
\begin{itemize}
\item
$\chi(\varpi)$ is either $1$ or $-1$, 
\item
$\chi|_{U^{2}_{K}}$ is trivial, and
\item
$\chi|_{U^{1}_{K}}$ is the unique non-trivial character (say $\chi_{1}$) of $U^1_K$. 
\end{itemize}
Under the identification
\[
U^{1}_{K}/U^{2}_{K}\cong \F_{2}\colon 1+\varpi x \mapsto \overline{x} 
\quad(x\in \mcO_K),
\]
where $\overline{x}$ denotes the modulo $\mfp_{K}$ reduction, we transform $\chi_{1}$ into the non-trivial character of $\F_2$. 
Moreover, since we have $\det(\sigma)\cong \delta_{K/\Q_{2}}\cdot(\chi|_{\Q_{2}^{\times}})$, where $\delta_{K/\Q_{2}}:=\det(\Ind_{K/\Q_{2}}\mathbbm{1})$, the triviality of $\det(\sigma)$ implies that $\chi|_{\Q_{2}^{\times}}=\delta_{K/\Q_{2}}$ (see \cite[29.2]{BH06}).
By \cite[(10.1.6) Proposition]{BF83}, $\delta_{K/\Q_{2}}$ is unramified and takes value the Jacobi symbol $(\frac{2}{7})$ at $2$.
But $(\frac{2}{7})=1$ since $3^2\equiv2 \pmod{7}$.
Hence $\chi|_{\Q_{2}^{\times}}$ is trivial.
In particular, $\chi(\varpi)$ must be $1$.
Conversely, $\chi|_{\Q_{2}^{\times}}$ is trivial when $\chi(\varpi)=1$.

On the other hand, \cite{KLS10} introduces the Galois closure $L$ of $K$, which is the splitting field of $K/\Q_2$.
Then the $21$-dimensional representation $\rho$ of $W_{\Q_2}$ is introduced, which is given by $\Ind_{L/\Q_{2}}(\eta)$, where $\eta$ is a quadratic character of $L^{\times}$ taking value $1$ at the uniformizer $\varpi$ and giving on restriction to $U^1_L$ any non-trivial character trivial on $U^2_L$ (there are $7$ of them, any of which induces to $\rho$). 
Then $\rho$ splits into the sum of three irreducible representations of dimension $7$, only one of which is self-dual.
 
But we can take $\eta=\chi\circ N_{L/K}$, which shows that $\sigma$ is the self-dual irreducible component of $\rho$. 
That proves what we claimed.
\end{proof}

The above arguments can be generalized to any $2$-adic field $F$ as a base field. 

\begin{prop}
Let $\pi$ be a simple supercuspidal representation of $\Sp_6(F)$ and $\sigma$ be the corresponding representation of $W_{F}$, which is an orthogonal irreducible representation of dimension $7$.
Then $\sigma$ takes values (up to conjugation) in the subgroup $I$ of $G_2(\C)$, and its image is then either $I$ or its subgroup of index $3$.
\end{prop}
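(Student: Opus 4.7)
The plan is to carry over the argument of the previous proposition to a general $2$-adic field $F$, tracking how the arithmetic changes. By \cite{HO22} (building on \cite{BH11}), one can write $\sigma = \Ind_{K/F}(\chi)$, where $K/F$ is the unique-up-to-isomorphism tame totally ramified extension of degree $7$, generated by a $7$-th root $\varpi$ of a uniformizer of $F$, and $\chi$ is a quadratic character of $K^\times$ of Swan exponent $1$ with $\chi|_{U_K^2}$ trivial and $\chi|_{U_K^1/U_K^2}$ the unique non-trivial character. The constraint $\det(\sigma)=1$ forces $\chi|_{F^\times} = \delta_{K/F}$, and the value of $\chi(\varpi)$ is then determined as in the previous proposition by the Jacobi-symbol computation for $\delta_{K/F}$.

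Let $L = K(\zeta_7)$ be the Galois closure of $K/F$; then $F(\zeta_7)/F$ is unramified of degree $d$ equal to the multiplicative order of $q := |k_F|$ modulo $7$. Since the powers of $2$ modulo $7$ cycle through $\{1,2,4\}$, one has $d \in \{1,3\}$, with $d=1$ exactly when $\mu_7 \subset F$ (equivalently, $3 \mid [k_F:\mathbb{F}_2]$). Set $\eta = \chi \circ N_{L/K}$. When $d=3$, the previous proposition's argument transplants verbatim: $\Ind_{L/F}(\eta)$ has dimension $21$ and splits into three irreducible $7$-dimensional pieces, only one of which is self-dual, and that one is $\sigma$. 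When $d=1$, the extension $K/F$ is itself cyclic Galois of degree $7$, so $L=K$ and $\sigma = \Ind_{K/F}(\chi)$ directly; irreducibility follows from $\mu_7 \subset F$ acting non-trivially on the non-trivial character $\chi|_{U_K^1/U_K^2}$ of $k_K$.

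The final step is to identify the image of $\sigma$ in $\GL_7(\mathbb{C})$ by matching the Galois-theoretic data against the structure of $I$. The wild-inertia contribution always yields the normal subgroup $J \cong \mathbb{F}_8$ of $I$, via the $\mu_7$-action on additive characters of $k_K$. When $d=3$, the tame quotient contributes the full $\mathbb{F}_8^\times \rtimes \Gal(\mathbb{F}_8/\mathbb{F}_2)$, so the image of $\sigma$ is conjugate to $I$; when $d=1$, only the $\mathbb{F}_8^\times$ factor is present, and the image is the subgroup $J \rtimes \mathbb{F}_8^\times$ of $I$ of index $3$. The main obstacle is verifying that this abstractly determined image is conjugate, inside $\GL_7(\mathbb{C})$, to the specific embedding of $I$ (or its index-$3$ subgroup) inside $G_2(\mathbb{C})$; this follows from the uniqueness statement of \cite{KLS10} for the faithful self-dual irreducible $7$-dimensional representation of $I$, once the image has been pinned down abstractly.
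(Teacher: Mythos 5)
Your argument is essentially the paper's: write $\sigma=\Ind_{K/F}(\chi)$ via \cite{HO22}, split according to whether $7\mid q_F-1$, show that the wild part of the image is a line over $\F_8$ using the order-$7$ tame action on characters of the residue field, so that the image is $I$ (resp.\ its index-$3$ subgroup), and invoke the uniqueness of the faithful self-dual irreducible $7$-dimensional representation from \cite{KLS10} to place the image, up to conjugation, inside $I\subset G_2(\C)$ --- exactly the route of the paper, which treats this last conjugacy point in the remark following its proof, and your detour through the $21$-dimensional induced representation in the $d=3$ case is harmless but not needed. Two small corrections: $K/F$ is unique up to isomorphism only when $7\nmid q_F-1$, and in that case $\mu_7\not\subset F$, so the ``$\mu_7$-action on additive characters'' must be carried out over the residue field $k_{K'}=k_{F'}$ of the Galois closure (where $\mu_7$ embeds into $\F_8^{\times}\subset k_{F'}^{\times}$), not over $k_K=k_F$; moreover, when $7\mid q_F-1$ one also needs the uniqueness of the self-dual irreducible $7$-dimensional representation for the index-$3$ subgroup itself, as the paper notes in its remark.
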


\begin{proof}
Again by \cite{HO22} (cf.\ \cite{BH11}), $\sigma$ is given by $\Ind_{K/F}(\chi)$, where $K/F$ is a totally ramified extension of degree $7$ and $\chi$ is a quadratic character of $K^{\times}$ with $\Sw(\chi)=1$. 

Let $q_{F}$ be the cardinality of $k_{F}$.
We first consider the case where $7\nmid (q_{F}-1)$.
In this case, there is a unique such extension $K$ up to isomorphism, generated by a $7$th root $\Pi$ of a uniformizer of $F$.
Its Galois closure $K'/F$ is $KF'$ where $F'/F$ is the unramified extension of degree $3$ obtained by adjoining $7$th roots of unity. 
Then we have 
\[
\Gal(K'/F)\cong\Gal(K'/F')\rtimes\Gal(K'/K), 
\]
where the inertia subgroup $\Gal(K'/F')$ is cyclic of order $7$. 
The Frobenius element of $\Gal(K'/K)$ acts on $\Gal(K'/F')$ by taking $q_{F}$-th powers, an order $3$ automorphism of $\Gal(K'/F')$. 
\[
\xymatrix{
&K' \ar@{-}^-{\mathrm{Gal}}[dd]&\\
F'\ar@{-}^-{\text{tame of deg $7$}}[ru]&&K\ar@{-}_-{\text{ur.\ of deg $3$}}[lu] \\
&F\ar@{-}^-{\text{ur.\ of deg $3$}}[lu]\ar@{-}_-{\text{tame of deg $7$}}[ru]&
}
\]

The possible inducing characters of $K^{\times}$, to get by induction to $W_{F}$ an irreducible special orthogonal representation with Swan exponent $1$, are the quadratic characters $\chi$ non-trivial on $U_F^1$ but trivial
on $U_F^2$, and taking the value $1$ on the uniformizer $\Pi$. 
There are $q_F-1$ of them, as it should.
Consider the character $\chi'=\chi\circ\Nr_{K'/K}$ of $K^{\prime\times}$, which induces to $W_{F'}$ the restriction $\sigma'$ of $\sigma$.
We identify the residue field $k_{K'}$ of $K'$ with $U_{K'}^1/U_{K'}^2$ by the map
\[
U_{K'}^1/U_{K'}^2 \cong k_{K'} \colon 1+\Pi x \mapsto \overline{x} \quad(x\in\mcO_{K'}).
\]

The action of $\Gal(K'/F')$ is given by multiplication by $7$th roots of $1$, reflecting the action on $\Pi$, and $\Gal(K'/K)$ acts by its natural action on $k_{K'}=k_{F'}$.
The field $k_{F'}$ is an extension of $\F_8$, and the action of $\Gal(K'/F')$ by $7$th roots of $1$ is an action via a morphism into $\F_8^{\times}$. 
It follows that the restriction of $\sigma$ to $W_{K'}$, which is the direct sum of all conjugates of $\chi'$, factors through the quotient of $k_{F'}$ by an $\F_8$ hyperplane, and in particular its image is a line over $\F_8$. 
It follows again that the image of $\sigma$ is isomorphic to $I$.

We next consider the case where $7|(q_F-1)$.
In this case, the situation is simpler, as $K/F$ is cyclic of degree $7$. 
A similar, but simpler, analysis goes through to show that the image of $\sigma$ is isomorphic to the index $3$ subgroup of $I$.
\end{proof}

\begin{rem}
An issue that might come up is that the group $I$ or its index $3$ subgroup $I_0$, which have only one self-dual irreducible representation $V$ of dimension $7$, may still have several embedding into $G_2(\C)$, up to conjugation. It is however not the case, because there is only one line of fixed vectors in $W=\wedge^{3}V$: we know there is one because $I$ is obtained in \cite{KLS10} as a subgroup of $G_2(\C)$, and if you look at the representation of $J$ in $W$, the fixed point set has dimension $7$, and as a representation of $I_0/J$ is the direct sum of all characters.
\end{rem}

\appendix
\section{Some basic facts on representations of finite groups}

In this section, we collect some basic facts on representations of finite groups.
The facts introduced in this section might be well-known, but we give proofs for the sake of completeness.

\subsection{Symmetric and Exterior squares of induced representations}

Let $G$ be a finite group and $H$ a subgroup of $G$ of index $2$.
We fix an element $\gamma$ of $G\smallsetminus H$.
Let $\sigma$ be a representation of $H$.
If we put
\[
\begin{cases}
\tau(h)(v_{1}\otimes v_{2}):=(\sigma(h)v_{1})\otimes(\sigma^{\gamma}(h)v_{2})\\
\tau(\gamma h)(v_{1}\otimes v_{2}):=(\sigma^{\gamma}(h)v_{2})\otimes(\sigma(\gamma^{2})\sigma(h)v_{1})
\end{cases}
\]
for any $h\in H$, then it can be easily checked that $\tau$ is a representation of $G$ which extends the representation $\sigma\otimes\sigma^\gamma$ of $H$.

\begin{prop}\label{prop:Ind-Sym-Ext}
We put $\rho:=\Ind_{H}^{G}(\sigma)$.
Let $\omega$ be the non-trivial character of $G/H$.
Then we have
\begin{enumerate}
\item
$\wedge^2\rho\cong\Ind_{H}^{G}(\wedge^{2}\sigma)\oplus\tau\otimes\omega$.
\item
$\Sym^2\rho\cong\Ind_{H}^{G}(\Sym^{2}\sigma)\oplus\tau$.
\end{enumerate}
\end{prop}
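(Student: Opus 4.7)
The plan is to decompose $\rho\otimes\rho\cong\Sym^{2}\rho\oplus\wedge^{2}\rho$ via Mackey and then match summands by a character calculation. Write the underlying space of $\rho$ as $V_{1}\oplus V_{\gamma}$, with $V_{1}$ carrying $\sigma$ and $V_{\gamma}$ carrying $\sigma^{\gamma}$; then $\gamma$ swaps the two summands, acting as the identity $V_{1}\to V_{\gamma}$ and as $\sigma(\gamma^{2})$ from $V_{\gamma}\to V_{1}$. Combining the Mackey decomposition $\rho|_{H}\cong\sigma\oplus\sigma^{\gamma}$ with the projection formula $\Ind_{H}^{G}(\pi)\otimes\rho'\cong\Ind_{H}^{G}(\pi\otimes\rho'|_{H})$ gives
\[
\rho\otimes\rho
\cong
\Ind_{H}^{G}(\Sym^{2}\sigma)
\oplus
\Ind_{H}^{G}(\wedge^{2}\sigma)
\oplus
\Ind_{H}^{G}(\sigma\otimes\sigma^{\gamma}),
\]
and since $\tau$ extends $\sigma\otimes\sigma^{\gamma}$ to $G$, a standard Frobenius reciprocity argument identifies the last summand with $\tau\oplus(\tau\otimes\omega)$.

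To assign these pieces to $\Sym^{2}\rho$ and $\wedge^{2}\rho$, I would first observe that the subspace $\Sym^{2}V_{1}\oplus\Sym^{2}V_{\gamma}\subset\Sym^{2}\rho$ is $G$-stable (the element $\gamma$ swaps the two summands) and is canonically isomorphic to $\Ind_{H}^{G}(\Sym^{2}\sigma)$; likewise $\wedge^{2}V_{1}\oplus\wedge^{2}V_{\gamma}$ is a copy of $\Ind_{H}^{G}(\wedge^{2}\sigma)$ inside $\wedge^{2}\rho$. Removing these leaves complementary $G$-stable ``mixed'' subspaces $M\subset\Sym^{2}\rho$ and $N\subset\wedge^{2}\rho$, each isomorphic as $H$-representation to $\sigma\otimes\sigma^{\gamma}$. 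Hence $\{M,N\}=\{\tau,\tau\otimes\omega\}$, and it remains only to decide which is which.

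For this I would compute the trace of $\gamma$ on $M$ directly. With the basis $m_{ij}:=[e_{i}]_{1}\otimes[e_{j}]_{\gamma}+[e_{j}]_{\gamma}\otimes[e_{i}]_{1}$ of $M$ (where $e_{1},\dots,e_{n}$ is a basis of the underlying space of $\sigma$), a short calculation gives $\gamma\cdot m_{ij}=\sum_{k}\sigma(\gamma^{2})_{kj}\,m_{ki}$, so only the diagonal terms $i=j=k$ contribute and $\chi_{M}(\gamma)=\Tr(\sigma(\gamma^{2}))=\chi_{\sigma}(\gamma^{2})$; the analogous computation with antisymmetric basis elements yields $\chi_{N}(\gamma)=-\chi_{\sigma}(\gamma^{2})$. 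The defining formula $\tau(\gamma)(v_{1}\otimes v_{2})=v_{2}\otimes\sigma(\gamma^{2})v_{1}$ likewise yields $\chi_{\tau}(\gamma)=\chi_{\sigma}(\gamma^{2})$ by an identical matrix trace computation. Since $\tau$ and $\tau\otimes\omega$ differ by a sign at $\gamma$, this forces $M\cong\tau$ and $N\cong\tau\otimes\omega$, proving both assertions.

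The main obstacle is the bookkeeping required to ensure that $\chi_{M}$ agrees with $\chi_{\tau}$ on \emph{all} of $\gamma H$, not merely at the element $\gamma$: in general, two class functions on $G$ restricting to the same class function on $H$ may still differ on conjugacy classes inside $\gamma H$ that are not $G$-conjugate to $\gamma$. To close this gap I would carry out the analogous matrix trace calculation for an arbitrary element $\gamma h\in\gamma H$, showing $\chi_{M}(\gamma h)=\chi_{\sigma}((\gamma h)^{2})$, and verify that the formula for $\tau(\gamma h)$ produces the same value, using the $H$-conjugation identity $h(\gamma h\gamma h)h^{-1}=h\gamma h\gamma$ to reduce both traces to class functions of $(\gamma h)^{2}\in H$.
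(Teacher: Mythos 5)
Your argument is correct, but it is organized differently from the paper's. The paper never touches $\Sym^{2}\rho$ and $\wedge^{2}\rho$ individually: it reduces Proposition \ref{prop:Ind-Sym-Ext} to the two virtual identities $\rho\otimes\rho\cong\Ind_{H}^{G}(\sigma\otimes\sigma)\oplus\tau\oplus(\tau\otimes\omega)$ and $\Psi^{2}\rho\cong\Ind_{H}^{G}(\Psi^{2}\sigma)\oplus\tau-(\tau\otimes\omega)$, and proves both by comparing characters via the Frobenius formula for induced characters together with the twisted-trace Lemmas \ref{lem:tensor-twisted-trace} and \ref{lem:twisted-trace}; the symmetric and exterior squares are then recovered as half-sum and half-difference. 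You instead work structurally inside $\Sym^{2}\rho$ and $\wedge^{2}\rho$: the $G$-stable decompositions $\Sym^{2}\rho=(\Sym^{2}V_{1}\oplus\Sym^{2}V_{\gamma})\oplus M$ and $\wedge^{2}\rho=(\wedge^{2}V_{1}\oplus\wedge^{2}V_{\gamma})\oplus N$, with the outer pieces visibly induced, reduce everything to identifying the mixed summands, which you do by computing $\chi_{M}(\gamma h)=\Tr\sigma((\gamma h)^{2})$ and $\chi_{N}(\gamma h)=-\Tr\sigma((\gamma h)^{2})$ and matching them with $\chi_{\tau}$ and $\chi_{\tau\otimes\omega}$; this trace computation is exactly the content of Lemmas \ref{lem:tensor-twisted-trace} and \ref{lem:twisted-trace}(2), so the two proofs share their only nontrivial ingredient. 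What your route buys is an explicit, basis-level realization of each summand (in particular one sees directly where $\tau$ and $\tau\otimes\omega$ sit inside $\Sym^{2}\rho$ and $\wedge^{2}\rho$), at the cost of slightly more bookkeeping; the paper's route is shorter because the $\Psi^{2}$ trick handles both statements in one character comparison.

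Two small points. First, your intermediate assertion that $\{M,N\}=\{\tau,\tau\otimes\omega\}$ does not yet follow at that stage when $\tau$ is reducible: from $M\oplus N\cong\tau\oplus(\tau\otimes\omega)$ and $M|_{H}\cong N|_{H}\cong\sigma\otimes\sigma^{\gamma}$ one cannot formally conclude the multiset equality (e.g.\ if $\tau\cong\mathbbm{1}\oplus\omega$). This is harmless, because your final computation of $\chi_{M}$ and $\chi_{N}$ on all of $\gamma H$ identifies $M\cong\tau$ and $N\cong\tau\otimes\omega$ outright, making both that claim and the opening Mackey/projection-formula paragraph logically superfluous. Second, you are right to insist on computing at every $\gamma h$ rather than only at $\gamma$; with that step carried out (and it does go through, reducing to $\Tr\sigma(\gamma^{2}\cdot\gamma^{-1}h\gamma\cdot h)=\Tr\sigma((\gamma h)^{2})$ in your model of the induced representation), the proof is complete, including the degenerate case $\tau\cong\tau\otimes\omega$, since the identification is by direct character equality rather than by elimination.
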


We first show some lemmas.

\begin{lem}\label{lem:tensor-twisted-trace}
Let $V_{1}$ and $V_{2}$ be finite dimensional $\C$-vector spaces equipped with isomorphisms $I_{1}\colon V_{1}\rightarrow V_{2}$ and $I_{2}\colon V_{2}\rightarrow V_{1}$.
If we define an automorphism $I_{1}\otimes I_{2}$ of $V_{1}\otimes V_{2}$ by $I_{1}\otimes I_{2}(v_{1}\otimes v_{2}):=I_{2}(v_{2})\otimes I_{1}(v_{1})$, then we have 
\[
\Tr(I_{1}\otimes I_{2}\mid V_{1}\otimes V_{2})=\Tr(I_{2}\circ I_{1}\mid V_{1}).
\]
\end{lem}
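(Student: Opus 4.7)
The statement is essentially a compatibility between the trace of a ``swap-twisted'' operator on a tensor product and the trace of a composition. My plan is to prove it by a direct computation in an adapted basis, exploiting the freedom to choose bases of $V_1$ and $V_2$ independently.

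First, I would pick any basis $\{e_1,\ldots,e_n\}$ of $V_1$, and transport it to a basis $\{f_i := I_1(e_i)\}$ of $V_2$ via the isomorphism $I_1$. This is the key trick: by using $I_1$ itself to define the basis of $V_2$, the operator $I_1$ gets identified with the ``identity matrix'' in these bases, which will force $I_1 \otimes I_2$ to be essentially governed by $I_2$ alone. I would then take $\{e_i \otimes f_j\}_{i,j}$ as the induced basis of $V_1 \otimes V_2$.

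Next I would expand $I_2(f_j) = \sum_{k} a_{kj} e_k$, so that $(a_{kj})$ is the matrix of $I_2 : V_2 \to V_1$ in the bases $\{f_j\}$ and $\{e_k\}$. By the definition of the twisted tensor operator,
\[
(I_1 \otimes I_2)(e_i \otimes f_j) \;=\; I_2(f_j) \otimes I_1(e_i) \;=\; \sum_{k} a_{kj}\, e_k \otimes f_i.
\]
The coefficient of $e_i \otimes f_j$ on the right is $a_{ij}$ when $i = j$ and $0$ otherwise, so only the ``diagonal'' basis vectors $e_i \otimes f_i$ contribute, giving
\[
\Tr(I_1 \otimes I_2 \mid V_1 \otimes V_2) \;=\; \sum_{i} a_{ii}.
\]
On the other hand, $(I_2 \circ I_1)(e_i) = I_2(f_i) = \sum_k a_{ki} e_k$, so the matrix of $I_2 \circ I_1$ in the basis $\{e_i\}$ has diagonal entries $a_{ii}$, and its trace is also $\sum_i a_{ii}$.

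There is no real obstacle here: the only subtlety is being careful about the swap built into the definition of $I_1 \otimes I_2$ (which sends $v_1 \otimes v_2$ to $I_2(v_2) \otimes I_1(v_1)$ rather than to $I_1(v_1) \otimes I_2(v_2)$) and making sure indices are tracked correctly. The argument is basis-dependent only superficially, since both sides are intrinsic; the clever choice $f_i = I_1(e_i)$ makes the identification transparent. No independence of basis check is needed since both traces are manifestly well-defined.
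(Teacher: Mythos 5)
Your proof is correct and is essentially the same as the paper's: both transport a basis of $V_{1}$ to $V_{2}$ via $I_{1}$ (the paper's $e'_{i}=I_{1}(e_{i})$ is your $f_{i}$) and then read off that only the diagonal vectors $e_{i}\otimes f_{i}$ contribute, with contributions matching the diagonal entries of $I_{2}\circ I_{1}$. Your version merely makes the matrix coefficients $a_{kj}$ explicit, which the paper leaves implicit.
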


\begin{proof}
We take a $\C$-basis $\{e_{1},\ldots,e_{n}\}$ of $V_{1}$ and define a $\C$-basis $\{e'_{1},\ldots,e'_{n}\}$ of $V_{2}$ by $e'_{i}:=I_{1}(e_{i})$.
Then $\{e_{j}\otimes e'_{j}\}_{1\leq i,j \leq n}$ forms a $\C$-basis of $V_{1}\otimes V_{2}$.
Since we have
\[
I_{1}\otimes I_{2}(e_{i}\otimes e'_{j})
=I_{2}(e'_{j})\otimes I_{1}(e_{i})
=(I_{2}\circ I_{1})(e_{j})\otimes e'_{i},
\]
$e_{i}\otimes e'_{j}$ contributes to the trace of $I_{1}\otimes I_{2}$ only when $i=j$ (see the matrix in the basis $\{e_{j}\otimes e'_{j}\}_{1\leq i,j \leq n}$).
Moreover, the contribution is exactly that of $e_{i}$ to the trace of $I_{2}\circ I_{1}$ (seen in the basis $\{e_{i}\}_{1\leq i\leq n}$).
\end{proof}

\begin{lem}\label{lem:twisted-trace}
\begin{enumerate}
\item
We have $\Tr(\sigma)(g^{2})=\Tr(\sigma^{\gamma})(g^{2})$ for any $g\in G$, where $\gamma$ is a(ny) element of $G\smallsetminus H$.
\item
We have 
\[
\Tr(\tau)(g)
=
\begin{cases}
\Tr(\sigma)(g)\cdot \Tr(\sigma^{\gamma})(g) & \text{if $g\in H$,}\\
\Tr(\sigma)(g^{2}) & \text{if $g\in G\smallsetminus H$.}
\end{cases}
\]
\end{enumerate}
\end{lem}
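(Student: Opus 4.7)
The plan is to handle the two parts separately, with part (1) providing the key trace identity that feeds into part (2).

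For part (1), observe first that $g^{2}\in H$ for every $g\in G$, since $H$ has index $2$, so both sides involve $\sigma$ applied to elements of $H$. The main case is $g\in G\smallsetminus H$: writing $g=\gamma k$ with $k\in H$, a direct expansion yields $g^{2}=\gamma^{2}k^{\gamma}k$ and $\gamma^{-1}g^{2}\gamma=(k\gamma)^{2}=\gamma^{2}k^{\gamma^{2}}k^{\gamma}$, where I use the notation $x^{\gamma}:=\gamma^{-1}x\gamma$. The algebraic identity $\gamma^{2}k^{\gamma^{2}}=k\gamma^{2}$, immediate from the definition of $k^{\gamma^{2}}$, reduces the claim to $\Tr\sigma(k\gamma^{2}k^{\gamma})=\Tr\sigma(\gamma^{2}k^{\gamma}k)$, which is cyclic invariance of the trace applied inside $\sigma(H)$.

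For part (2), the case $g\in H$ is immediate: $\tau(g)=\sigma(g)\otimes\sigma^{\gamma}(g)$ acts diagonally on $V\otimes V$, and its trace factors as $\Tr\sigma(g)\cdot\Tr\sigma^{\gamma}(g)$. For $g=\gamma h\in G\smallsetminus H$, the defining formula rewrites $\tau(g)(v_{1}\otimes v_{2})=\sigma^{\gamma}(h)v_{2}\otimes\sigma(\gamma^{2}h)v_{1}$ in the form treated by Lemma \ref{lem:tensor-twisted-trace}, with $I_{1}=\sigma(\gamma^{2}h)$ and $I_{2}=\sigma^{\gamma}(h)$. That lemma then gives $\Tr\tau(g)=\Tr(I_{2}\circ I_{1})=\Tr\sigma(\gamma^{-1}h\gamma^{3}h)$, and a first application of cyclic invariance turns this into $\Tr\sigma(\gamma^{2}hh^{\gamma})$.

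The main obstacle is matching this with the target $\Tr\sigma(g^{2})=\Tr\sigma(\gamma^{2}h^{\gamma}h)$. The interior factors $hh^{\gamma}$ and $h^{\gamma}h$ are $H$-conjugate by $h$, so their $\sigma$-traces agree, but this conjugation does not commute with the left factor $\sigma(\gamma^{2})$, so cyclic invariance alone is insufficient. Part (1) supplies exactly the needed bridge: applied to $h\gamma\in G\smallsetminus H$, whose square satisfies $(h\gamma)^{2}=h\gamma^{2}h^{\gamma^{-1}}$ (mirror of the computation above), it produces a trace identity that, combined with further cyclic rearrangements, interchanges $hh^{\gamma}$ and $h^{\gamma}h$ underneath $\sigma(\gamma^{2})$ and thereby closes the gap.
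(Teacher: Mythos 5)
Your part (1) is correct and is essentially the paper's own argument: for $g\in G\smallsetminus H$ the element $g^{2}$ and its $\gamma$-conjugate are conjugate by an element of $H$, so their $\sigma$-traces agree. (One caveat: the case $g\in H$, which you set aside as not the ``main case'', is actually false for general $\sigma$ --- take $G=S_{3}$, $H=A_{3}$ and a nontrivial character $\sigma$ --- but neither you nor the paper needs it, since only the case $g\in G\smallsetminus H$ enters part (2).)

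The genuine gap is in part (2) for $g=\gamma h\in G\smallsetminus H$, and it stems from the convention you adopted for $\sigma^{\gamma}$. Your computation $\Tr\tau(g)=\Tr\sigma(\gamma^{-1}h\gamma^{3}h)=\Tr\sigma(\gamma^{2}hh^{\gamma})$ uses $\sigma^{\gamma}(x)=\sigma(\gamma^{-1}x\gamma)$. Under that convention the identity you still need, $\Tr\sigma(\gamma^{2}hh^{\gamma})=\Tr\sigma(\gamma^{2}h^{\gamma}h)=\Tr\sigma(g^{2})$, is simply false, so no combination of part (1) and cyclic rearrangements can close the gap: take $G=\langle r,s\mid r^{8}=s^{2}=1,\ srs^{-1}=r^{-1}\rangle$, $H=\langle r^{2},s\rangle$, $\gamma=r$, $h=s$, and $\sigma$ the $2$-dimensional irreducible representation of $H$; then $\gamma^{2}hh^{\gamma}=r^{4}$ while $\gamma^{2}h^{\gamma}h=g^{2}=1$, giving traces $-2$ and $2$. (Your auxiliary formula $(h\gamma)^{2}=h\gamma^{2}h^{\gamma^{-1}}$ is also incorrect; the correct one is $(h\gamma)^{2}=h\gamma^{2}h^{\gamma}$.) The resolution is that the defining formula for $\tau$ forces the opposite convention: $\tau$ is a homomorphism extending $\sigma\otimes\sigma^{\gamma}$ only if $\sigma^{\gamma}(x)=\sigma(\gamma x\gamma^{-1})$, as one sees from the requirement $\tau(h_{1})\tau(\gamma h_{2})=\tau(h_{1}\gamma h_{2})$, which needs $\sigma^{\gamma}(\gamma^{-1}h_{1}\gamma)=\sigma(h_{1})$. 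With that convention your own computation terminates at once: $\Tr\tau(g)=\Tr\bigl(\sigma^{\gamma}(h)\sigma(\gamma^{2}h)\bigr)=\Tr\sigma(\gamma h\gamma^{-1}\cdot\gamma^{2}h)=\Tr\sigma(g^{2})$ directly, or, writing the product in the other cyclic order as the paper does, $=\Tr\sigma(\gamma g^{2}\gamma^{-1})=\Tr\sigma^{\gamma}(g^{2})$, after which part (1) finishes --- which is exactly the paper's proof.
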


\begin{proof}
We show (1).
When $g\in G\smallsetminus H$, by writing $g=h\gamma$ with $h\in H$, we get $\gamma g^{2}\gamma^{-1}=\gamma h \gamma h=h^{-1}g^{2}h$.
In particular, $\gamma g^{2}\gamma^{-1}$ and $g^{2}$ are conjugate in $H$.
Hence we have $\Tr(\sigma^{\gamma})(g^{2})=\Tr(\sigma)(g^{2})$.

We next show (2).
As $\tau$ is an extension of $\sigma\otimes\sigma^{\gamma}$, the equality $\Tr(\tau)(g)=\Tr(\sigma)(g)\cdot\Tr(\sigma^{\gamma})(g)$ for $g\in H$ is obvious.
For the case where $g=\gamma h\in G\smallsetminus H$, by Lemma \ref{lem:tensor-twisted-trace}, we get
\[
\Tr(\tau)(g)
=\Tr(\sigma(\gamma^{2})\sigma(h)\sigma^{\gamma}(h))
=\Tr(\sigma)(\gamma^{2}h\gamma h\gamma^{-1})
=\Tr(\sigma^{\gamma})(g^{2})
=\Tr(\sigma)(g^{2}),
\]
where we used (1) in the last equality.
\end{proof}

\begin{proof}[Proof of Proposition \ref{prop:Ind-Sym-Ext}]
By noting that $\rho\otimes\rho\cong \Sym^{2}\rho\oplus\wedge^{2}\rho$ and $\Psi^{2}\rho\cong \Sym^{2}\rho-\wedge^{2}\rho$ (and that the same is true for $\sigma$), it suffices to show that 
\begin{enumerate}
\item[(1')]
$\rho\otimes\rho\cong\Ind_{H}^{G}(\sigma\otimes\sigma)\oplus\tau\oplus\tau\otimes\omega$ and 
\item[(2')]
$\Psi^2\rho\cong\Ind_{H}^{G}(\Psi^{2}\sigma)\oplus\tau-\tau\otimes\omega$.
\end{enumerate}

We first note that the Frobenius formula for induced representations implies that
\[
\Tr(\rho)(g)
=
\begin{cases}
\Tr(\sigma)(g)+\Tr(\sigma^{\gamma})(g) & \text{if $g\in H$,}\\
0 & \text{if $g\in G\smallsetminus H$.}
\end{cases}
\]
Thus, we have
\[
\Tr(\rho\otimes\rho)(g)
=
\Tr(\rho)(g)^{2}
=
\begin{cases}
(\Tr(\sigma)(g)+\Tr(\sigma^{\gamma})(g))^{2} & \text{if $g\in H$,}\\
0 & \text{if $g\in G\smallsetminus H$,}
\end{cases}
\]
and 
\[
\Tr(\Psi^{2}\rho)(g)
=
\Tr(\rho)(g^{2})
=
\Tr(\sigma)(g^{2})+\Tr(\sigma^{\gamma})(g^{2})
\]
where we note that $g^{2}$ always belongs to $H$ in the last equality.
Similarly, we have
\begin{align*}
\Tr(\Ind_{H}^{G}(\sigma\otimes\sigma))(g)
&=
\begin{cases}
\Tr(\sigma\otimes\sigma)(g)+\Tr((\sigma\otimes\sigma)^{\gamma})(g) & \text{if $g\in H$,}\\
0 & \text{if $g\in G\smallsetminus H$,}
\end{cases}\\
&=
\begin{cases}
\Tr(\sigma)(g)^{2}+\Tr(\sigma^{\gamma})(g)^{2} & \text{if $g\in H$,}\\
0 & \text{if $g\in G\smallsetminus H$,}
\end{cases}
\end{align*}
and 
\begin{align*}
\Tr(\Ind_{H}^{G}(\Psi^{2}\sigma))(g)
&=
\begin{cases}
\Tr(\Psi^{2}\sigma)(g)+\Tr(\Psi^{2}\sigma^{\gamma})(g) & \text{if $g\in H$,}\\
0 & \text{if $g\in G\smallsetminus H$,}
\end{cases}\\
&=
\begin{cases}
\Tr(\sigma)(g^{2})+\Tr(\sigma^{\gamma})(g^{2}) & \text{if $g\in H$,}\\
0 & \text{if $g\in G\smallsetminus H$.}
\end{cases}
\end{align*}
Thus, the identities (1') and (2') follows by using Lemma \ref{lem:twisted-trace}.
\end{proof}

\subsection{Faithful irreducible representation of a $p$-group}

\begin{lem}\label{lem:p-group-faithful-center}
Let $G$ be a $p$-group.
If $G$ has a faithful irreducible representation, then the center $Z(G)$ of $G$ is a non-trivial cyclic group.
\end{lem}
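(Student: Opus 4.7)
The plan is to use Schur's lemma together with the classical fact that finite subgroups of $\C^{\times}$ are cyclic. Let $\rho\colon G\to\GL(V)$ be a faithful irreducible representation of $G$. Since $\rho$ is irreducible, Schur's lemma tells us that every element $\rho(z)$ with $z\in Z(G)$ commutes with all of $\rho(G)$ and hence acts as a scalar on $V$. This yields a group homomorphism $\omega\colon Z(G)\to\C^{\times}$ sending $z$ to the scalar by which $z$ acts.

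Next, faithfulness of $\rho$ implies $\omega$ is injective: if $\omega(z)=1$ then $\rho(z)$ is the identity on $V$, so $z=1$. Thus $Z(G)$ embeds into $\C^{\times}$. Since $Z(G)$ is a finite subgroup of $\C^{\times}$, it must be cyclic (any finite subgroup of $\C^{\times}$ consists of roots of unity and is generated by a primitive one).

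It remains to show $Z(G)$ is non-trivial. Here I would invoke the usual class-equation argument for $p$-groups: assuming $G$ itself is non-trivial (otherwise the hypothesis of possessing a faithful irreducible representation is vacuous in an interesting sense), the class equation $|G|=|Z(G)|+\sum[G:C_G(x_i)]$, summed over representatives of non-central conjugacy classes, shows that $p$ divides $|Z(G)|$, so $Z(G)\neq\{1\}$. There is no real obstacle in this proof; the only point to be slightly careful about is recording the trivial-$G$ edge case, which one simply excludes.
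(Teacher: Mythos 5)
Your proof is correct and follows essentially the same route as the paper: Schur's lemma gives the central character, faithfulness of $\rho$ makes it a faithful character of $Z(G)$, which forces $Z(G)$ to embed in $\C^{\times}$ and hence be cyclic, while non-triviality of the center is the standard fact about $p$-groups (which the paper simply cites and you reprove via the class equation).
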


\begin{proof}
Suppose that $\rho$ is a faithful irreducible representation of $G$.
It is well-known that any $p$-group has a non-trivial center (see, e.g., \cite[Theorem 14 in Chapter 8]{Ser77}).
Thus $Z(G)$ is a non-trivial finite abelian $p$-group.
If we let $\omega_{\rho}$ be the central character of $\rho$, then $\omega_{\rho}$ is also faithful.
However, $Z(G)$ must be cyclic so that $Z(G)$ has a faithful $1$-dimensional character.
\end{proof}

\subsection{Representations of a certain $p$-group}\label{subsec:Heisenberg}

Let $p$ be a prime number.
Let $H$ be a finite $p$-group and $Z(H)$ its center.
We assume that $H$ is non-abelian and that there exists a subgroup $Z\subset Z(H)$ satisfying the following conditions:
\begin{enumerate}
\item
$Z$ is cyclic of order $p$, and
\item
$H/Z$ is an elementary abelian $p$-group.
\end{enumerate}
We remark that $G$ is said to be a \textit{extra-special $p$-group} if $Z$ can be taken to be $Z(H)$.

We note that, by the condition (2), the image of the commutator map $H\times H\rightarrow H\colon (x,y)\mapsto xyx^{-1}y^{-1}$ lies in $Z$.
Thus, by fixing an identification $Z\cong\F_{p}$, we obtain a symplectic form (i.e., non-degenerate alternating bilinear form) on $H/Z(H)$
\[
\langle-,-\rangle\colon H/Z(H)\times H/Z(H) \rightarrow \F_{p}.
\]
We take a maximal totally isotropic subspace $U\subset H/Z(H)$ with respect to $\langle-,-\rangle$.
We let $H'$ be the preimage of $U$ in $H$, which is abelian.

The following proposition is given in \cite[16.4 Lemma 2]{BH06} in the case where $Z=Z(H)$ (in fact, it is allowed that $Z=Z(H)$ is any finite cyclic group in \textit{loc.\ cit.}).

\begin{prop}\label{prop:Heisenberg}
For any character $\chi$ of $Z(H)$ whose restriction to $Z$ is non-trivial, there uniquely (up to isomorphism) exists an irreducible representation $\rho_{\chi}$ of $H$ on which $Z(H)$ acts via $\chi$.
Moreover, $\rho_{\chi}$ is explicitly given by $\Ind_{H'}^{H}(\chi')$ for any character $\chi'$ of $H'$ extending $\chi$ (note that such $\chi'$ can be always taken since $H'$ is abelian).
\end{prop}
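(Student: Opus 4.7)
The plan is to apply Mackey's irreducibility criterion to $\Ind_{H'}^{H}(\chi')$ and then use Frobenius reciprocity for uniqueness. The non-degeneracy of the commutator pairing together with the faithfulness of $\chi|_{Z}$ drives both steps.

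First I would check that $H'$ is normal in $H$. Indeed, the hypothesis that $H/Z$ is abelian implies $[H,H]\subseteq Z\subseteq Z(H)$, so conjugation by any element of $H$ is trivial on $H/Z(H)$; in particular it preserves $U$, so it preserves $H'$. Next I would verify that the commutator map descends to a non-degenerate symplectic form $\langle-,-\rangle\colon H/Z(H)\times H/Z(H)\to Z$: it is well-defined and alternating because $[H,H]\subseteq Z\subseteq Z(H)$, and its left radical is exactly the image of $Z(H)$, hence trivial. Since $U$ is a maximal totally isotropic subspace, $U=U^{\perp}$.

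For irreducibility of $\rho=\Ind_{H'}^{H}(\chi')$, by Mackey (together with normality of $H'$) it suffices to show that $(\chi')^{g}\neq\chi'$ for every $g\in H\smallsetminus H'$. A direct computation gives $(\chi')^{g}(x)\chi'(x)^{-1}=\chi'([g^{-1},x])$ for $x\in H'$, and $[g^{-1},x]\in Z$. Because $gZ(H)\notin U=U^{\perp}$, non-degeneracy furnishes some $h'\in H'$ with $[g^{-1},h']\neq 1$ in $Z$; since $\chi|_{Z}$ is a non-trivial character of the cyclic group $Z$ of prime order $p$, it is faithful, hence $\chi'([g^{-1},h'])\neq 1$. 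This gives Mackey's criterion, so $\rho$ is irreducible. Its central character restricted to $Z(H)$ equals $\chi$ because $\chi'$ extends $\chi$ and $Z(H)\subseteq H'$.

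For existence and uniqueness of $\rho_{\chi}$, I would let $\rho$ be any irreducible representation of $H$ on which $Z(H)$ acts via $\chi$. Since $H'$ is abelian, $\rho|_{H'}$ decomposes as a direct sum of characters; each such character must restrict to $\chi$ on $Z(H)$, hence is of the form $\chi'$ for some extension of $\chi$ to $H'$. By Frobenius reciprocity, $\rho$ embeds into $\Ind_{H'}^{H}(\chi')$, and since the latter is irreducible by the previous step, $\rho\cong\Ind_{H'}^{H}(\chi')$. Finally I would note that any two extensions $\chi'_{1},\chi'_{2}$ of $\chi$ induce isomorphic representations: the group $H/H'$ acts freely on the set of extensions of $\chi$ (the stabilizer of any $\chi'$ consists of those $gH'$ with $[g,H']\subseteq\ker(\chi|_{Z})=\{1\}$, i.e.\ $gZ(H)\in U^{\perp}=U$, hence $g\in H'$), and since $|H/H'|=p^{n}=|\widehat{H'/Z(H)}|$ equals the number of extensions, the action is transitive; Mackey then gives $\Ind(\chi'_{1})\cong\Ind(\chi'_{2})$.

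There is no real obstacle, since the argument is the standard Stone--von Neumann--Mackey template; the only substantive point is to observe that the hypotheses ``$Z$ cyclic of order $p$ central in $H$, $H/Z$ elementary abelian, $\chi|_{Z}$ non-trivial'' give exactly what is needed to replace the role of $Z(H)$ in the classical extra-special case treated in \cite[16.4, Lemma~2]{BH06}: the pairing lives in $Z$, and $\chi|_{Z}$ being non-trivial on a group of prime order makes it automatically faithful there, which is all that the Mackey calculation requires.
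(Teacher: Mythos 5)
Your proof is correct, and its first half is essentially the paper's: the irreducibility of $\Ind_{H'}^{H}(\chi')$ is established in both arguments by the same stabilizer computation, namely that $(\chi')^{g}=\chi'$ forces $\chi'([g,h'])=1$ for all $h'\in H'$, hence $[g,h']=1$ by faithfulness of $\chi|_{Z}$ (automatic since $Z$ has prime order), hence $g\in H'$ by maximality of the isotropic subspace $U$ (i.e.\ $U=U^{\perp}$). Where you genuinely diverge is the uniqueness step. The paper proves it by a global counting argument: writing $|H/Z(H)|=p^{2d}$ and $|Z(H)/Z|=p^{r}$, it compares $|H|=\sum_{\rho}(\dim\rho)^{2}$ with the contribution $p^{2d+r}$ of the representations trivial on $Z$ and the contribution at least $p^{2d}$ for each of the $p^{r}(p-1)$ central characters non-trivial on $Z$; equality forces exactly one irreducible representation per such central character, and simultaneously shows the induced representations exhaust all irreducibles non-trivial on $Z$. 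You instead argue locally via Clifford theory: any irreducible $\rho$ with central character $\chi$ restricts to $H'$ as a sum of extensions of $\chi$, so Frobenius reciprocity embeds $\rho$ into some $\Ind_{H'}^{H}(\chi')$, which is irreducible; you then need the extra step, which you supply correctly, that all extensions of $\chi$ to $H'$ are $H$-conjugate (orbit--stabilizer: the stabilizer of any extension is $H'$ and $[H:H']=[H':Z(H)]$ is the number of extensions), so the induced representation is independent of the choice of $\chi'$. Your route makes the conjugacy of extensions explicit and avoids enumerating the representations trivial on $Z$, while the paper's count gets uniqueness and the independence of $\chi'$ in one stroke, together with the exhaustion statement, at the price of being less constructive about why two different extensions give the same induced representation.
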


\begin{proof}
We just reproduce the proof of \cite[16.4 Lemma 2]{BH06} in our setting.

The center $Z(H)$ obviously acts on $\Ind_{H'}^{H}(\chi')$ via $\chi$.
Thus let us next check that $\Ind_{H'}^{H}(\chi')$ is irreducible.
Since $H'$ is normal in $H$, it suffices to show that any element $h\in H$ satisfying $(\chi')^{h}=\chi'$ lies in $H'$.
If $h\in H$ is such an element, we have $\chi'(hh'h^{-1}h^{\prime-1})=1$ for any $h'\in H'$.
Since $\chi'$ extends the character $\chi$ of $Z$, which is faithful, this implies that $hh'h^{-1}h^{\prime-1}=1$ for any $h'\in H'$.
As $H'$ is the preimage of the maximal totally isotropic subspace $U$ of $H/Z(H)$ with respect to $\langle-,-\rangle$, we see that $h$ must belongs to $H'$.

Let us finally show the uniqueness of $\rho_{\chi}$ by a counting argument.
For this, we let $|H/Z(H)|=p^{2d}$ and $|Z(H)/Z|=p^{r}$, hence $|H|=p^{2d+r+1}$.
Note that
\begin{align*}
|H|
&=\sum_{\text{$\rho$: irr.\ rep.\ of $H$}} (\dim\rho)^{2}\\
&=\sum_{\begin{subarray}{c}\text{$\rho$: irr.\ rep.\ of $H$} \\ \text{trivial on $Z$} \end{subarray}} (\dim\rho)^{2} + \sum_{\begin{subarray}{c}\text{$\rho$: irr.\ rep.\ of $H$} \\ \text{non-trivial on $Z$} \end{subarray}} (\dim\rho)^{2}.
\end{align*}
Since $H/Z$ is abelian, the first term is simply given by $|H/Z|=p^{2d+r}$.
On the other hand, by noting that $\dim(\Ind_{H'}^{H}(\chi'))^{2}=p^{2d}$ and that the number of characters of $Z(H)$ non-trivial on $Z$ is given by $p^{r}(p-1)$, we see that the second term is bounded below by $p^{2d+r}(p-1)$.
Thus we get $|H|\geq p^{2d+r+1}$.
However, we already know that $|H|=p^{2d+r+1}$.
Therefore, we conclude that the representations $\Ind_{H'}^{H}(\chi')$ exhausts all irreducible representations of $H$ whose restrictions to $Z$ are non-trivial.
\end{proof}

\end{document}